% !TeX spellcheck = en_US

\documentclass[11pt]{scrartcl}
% !TeX spellcheck = en_US
% !TEX root =  main.tex

%%%%%%%%%%%%%%%%% packages %%%%%%%%%%%%%%%%%%%%%

\usepackage[utf8]{inputenc}
\usepackage[T1]{fontenc}
\usepackage{lmodern}
\usepackage{alphabeta}

\usepackage{titling}
\usepackage{xspace}
\usepackage{soul} % for \st to strike out text

\usepackage[letterpaper, top=25.4mm, bottom=25.4mm, left=25.4mm, right=25.4mm, includefoot]{geometry}
\linespread{1}

\usepackage{color}
\definecolor{MidnightBlack}{rgb}{0.1,0.1,.32}
\definecolor{MidnightBlue}{rgb}{0.1,0.1,0.44}
\definecolor{Black}{rgb}{0,0, 0}
\definecolor{Blue}{rgb}{0, 0 ,1}
\definecolor{Red}{rgb}{1, 0 ,0}
\definecolor{White}{rgb}{1, 1, 1}
\definecolor{Grey}{rgb}{.6, .6, .6}
\definecolor{Mygreen}{rgb}{.0, .7, .0}
\definecolor{Yellow}{rgb}{.55,.55,0}
\definecolor{darkmagenta}{rgb}{0.30, 0.0, 0.30}
\definecolor{darkorange}{rgb}{1.0, 0.55, 0.0}
\definecolor{ao}{rgb}{1.0, 0.13, 0.32}
\definecolor{brandeisblue}{rgb}{0.0, 0.44, 1.0}

\setlength{\parindent}{0pt}
\setlength{\parskip}{2pt}

\DeclareSectionCommand[%
level=4,
indent=0pt,
beforeskip=1ex plus 1ex minus .2ex,
afterskip=-1em,
font={},
tocindent=7em,
tocnumwidth=4.1em,
counterwithin=subsubsection
]{paragraph}

\usepackage[inline]{enumitem}

\usepackage{dsfont}
\usepackage{setspace}

\usepackage{bm}
\usepackage{microtype}
\usepackage{amsmath}
\usepackage{amssymb}
\usepackage{amsfonts}
\usepackage{mathtools}
\usepackage[mathscr]{euscript}

\usepackage[hyphens]{url}
\usepackage{amsthm}

\usepackage{tikz}
\usepackage{xcolor}
\usepackage{subcaption}
\usepackage{graphicx}
\usepackage{wrapfig}

\usepackage{hyperref}
\usepackage{zref-clever}

\usepackage{nicefrac}
\usepackage[textwidth=1.5in]{todonotes}

%%%%%%%%%%%%%%%%%%%%%% colours %%%%%%%%%%%%%%%%%%%%

%Colours
\colorlet{myGreen}{green!50!black}
\colorlet{myLightgreen}{green}
\colorlet{myRed}{red!90!black}
\definecolor{myBlue}{rgb}{0.25, 0.0, 1.0}
\definecolor{myLightBlue}{rgb}{0.39, 0.58, 0.93}
\colorlet{myViolet}{myBlue!55!myRed}
\definecolor{myOrange}{rgb}{1.0, 0.66, 0.07}

\definecolor{CornflowerBlue}{rgb}{0.39, 0.58, 0.93}
\definecolor{DarkGoldenrod}{rgb}{0.72, 0.53, 0.04}
\definecolor{BritishRacingGreen}{rgb}{0.0, 0.26, 0.15}
\definecolor{DarkMagenta}{rgb}{0.55, 0.0, 0.55}
\definecolor{AO}{rgb}{0.0, 0.5, 0.0}
\definecolor{BostonUniversityRed}{rgb}{0.8, 0.0, 0.0}
\definecolor{myRed}{rgb}{0.8, 0.0, 0.0}
\definecolor{DarkMidnightBlue}{rgb}{0.0, 0.2, 0.4}
\definecolor{DarkTangerine}{rgb}{1.0, 0.66, 0.07}
\definecolor{AppleGreen}{rgb}{0.55, 0.71, 0.0}
\definecolor{BrightUbe}{rgb}{0.82, 0.62, 0.91}
\definecolor{Amethyst}{rgb}{0.6, 0.4, 0.8}
\definecolor{DarkGray}{rgb}{0.52, 0.52, 0.51}
\definecolor{Gray}{rgb}{0.66, 0.66, 0.66}
\definecolor{BananaYellow}{rgb}{1.0, 0.88, 0.21}
\definecolor{Amber}{rgb}{1.0, 0.75, 0.0}
\definecolor{LightGray}{rgb}{0.83, 0.83, 0.83}
\definecolor{PrincetonOrange}{rgb}{1.0, 0.56, 0.0}
\definecolor{DeepCarrotOrange}{rgb}{0.91, 0.41, 0.17}
\definecolor{CarrotOrange}{rgb}{0.93, 0.57, 0.13}
\definecolor{MidnightBlue}{rgb}{0.1, 0.1, 0.44}
\definecolor{Magenta}{rgb}{0.50, 0.0, 0.50}
\definecolor{BrightPink}{rgb}{1.0, 0.0, 0.5}
\definecolor{BrilliantRose}{rgb}{1.0, 0.33, 0.64}
\definecolor{ChromeYellow}{rgb}{1.0, 0.65, 0.0}
\definecolor{HotMagenta}{rgb}{1.0, 0.11, 0.81}
\definecolor{Amethyst}{rgb}{0.6, 0.4, 0.8}

%%%%%%%%%%%%%%%%% configuration of layout %%%%%%%%%%%%%%%%%%

\setlength{\parindent}{0pt}
\setlength{\parskip}{2pt}

\setstretch{1.05}

\setlength{\parindent}{0pt}
\setlength{\parskip}{2pt}

\clubpenalty=10000
\widowpenalty=10000
\displaywidowpenalty=10000
\tolerance 1414
\hbadness 1414
\emergencystretch 1.5em
\hfuzz 0.3pt
\widowpenalty=10000
\vfuzz \hfuzz
\raggedbottom

\setlist[itemize]{topsep=0pt,partopsep=0pt,itemsep=0pt,parsep=0pt}
\setlist[itemize,1]{label={\small\textbullet}}
\setlist[itemize,2]{label={\tiny\textbullet}}
\setlist[itemize,3]{label=$\cdot$}
\setlist[enumerate]{topsep=0pt,partopsep=0pt,itemsep=0pt,parsep=0pt}
\setlist[enumerate,1]{label=\roman*)}
\setlist[enumerate,2]{label=\alph*)}
\setlist[enumerate,3]{label=\arabic*)}

\hypersetup{
colorlinks=true,
linkcolor=AO!65!black,
citecolor=AO!65!black,
urlcolor=AppleGreen!65!black,
bookmarksopen=true,
bookmarksnumbered,
bookmarksopenlevel=2,
bookmarksdepth=3
}

%all black
%\hypersetup{
%	colorlinks=true,
%	linkcolor=black,
%	citecolor=black,
%	urlcolor=black,
%	bookmarksopen=true,
%	bookmarksnumbered,
%	bookmarksopenlevel=2,
%	bookmarksdepth=3
%}

%%%%%%%%%%%%%%%%%%% setup for zref-clever %%%%%%%%%%%%%%%%%%%%%

\theoremstyle{definition}

\newtheorem{environment}{Environment}[section]

\newtheorem{lemma}[environment]{Lemma}
\AddToHook{env/lemma/begin}{\zcsetup{countertype={environment=lemma}}}
\zcRefTypeSetup{lemma}{
Name-sg = Lemma ,
name-sg = Lemma ,
Name-pl = Lemmas ,
name-pl = Lemmas ,
}

\newtheorem*{lemma*}{Lemma}
\AddToHook{env/lemma*/begin}{\zcsetup{countertype={environment=lemma*}}}
\zcRefTypeSetup{lemma*}{
Name-sg = Lemma ,
name-sg = Lemma ,
Name-pl = Lemmas ,
name-pl = Lemmas ,
}

\newtheorem{proposition}[environment]{Proposition}
\AddToHook{env/proposition/begin}{\zcsetup{countertype={environment=proposition}}}
\zcRefTypeSetup{proposition}{
Name-sg = Proposition ,
name-sg = Proposition ,
Name-pl = Propositions ,
name-pl = Propositions ,
}

\newtheorem{corollary}[environment]{Corollary}
\AddToHook{env/corollary/begin}{\zcsetup{countertype={environment=corollary}}}
\zcRefTypeSetup{corollary}{
Name-sg = Corollary ,
name-sg = Corollary ,
Name-pl = Corollaries ,
name-pl = Corollaries ,
}

\newtheorem{theorem}[environment]{Theorem}
\AddToHook{env/theorem/begin}{\zcsetup{countertype={environment=theorem}}}
\zcRefTypeSetup{theorem}{
Name-sg = Theorem ,
name-sg = Theorem ,
Name-pl = Theorems ,
name-pl = Theorems ,
}

\newtheorem*{theorem*}{Theorem}
\AddToHook{env/theorem*/begin}{\zcsetup{countertype={environment=theorem*}}}
\zcRefTypeSetup{theorem*}{
Name-sg = Theorem ,
name-sg = Theorem ,
Name-pl = Theorems ,
name-pl = Theorems ,
}

\AddToHook{env/conjecture/begin}{\zcsetup{countertype={environment=conjecture}}}
\zcRefTypeSetup{conjecture}{
Name-sg = Conjecture ,
name-sg = Conjecture ,
Name-pl = Conjectures ,
name-pl = Conjectures ,
}

\newtheorem*{hypothesis*}{Hypothesis}
\AddToHook{env/hypothesis*/begin}{\zcsetup{countertype={environment=hypothesis*}}}
\zcRefTypeSetup{hypothesis*}{
Name-sg = Hypothesis ,
name-sg = Hypothesis ,
Name-pl = Hypotheses ,
name-pl = Hypotheses ,
}

\AddToHook{env/observation/begin}{\zcsetup{countertype={environment=observation}}}
\zcRefTypeSetup{observation}{
Name-sg = Observation ,
name-sg = Observation ,
Name-pl = Observations ,
name-pl = Observations ,
}

\AddToHook{env/example/begin}{\zcsetup{countertype={environment=example}}}
\zcRefTypeSetup{example}{
Name-sg = Example ,
name-sg = Example ,
Name-pl = Examples ,
name-pl = Examples ,
}

\AddToHook{env/remark/begin}{\zcsetup{countertype={environment=remark}}}
\zcRefTypeSetup{remark}{
Name-sg = Remark ,
name-sg = Remark ,
Name-pl = Remarks ,
name-pl = Remarks ,
}

\zcRefTypeSetup{equation}{
Name-sg = Equation ,
name-sg = Equation ,
Name-pl = Equations ,
name-pl = Equations ,
}

\zcRefTypeSetup{chapter}{
Name-sg = Chapter ,
name-sg = Chapter ,
Name-pl = Chapters ,
name-pl = Chapters ,
}

\zcRefTypeSetup{section}{
Name-sg = Section ,
name-sg = Section ,
Name-pl = Sections ,
name-pl = Sections ,
}

\zcRefTypeSetup{algorithm}{
Name-sg = Algorithm ,
name-sg = Algorithm ,
Name-pl = Algorithms ,
name-pl = Algorithms ,
}

\AddToHook{env/notation/begin}{\zcsetup{countertype={environment=notation}}}
\zcRefTypeSetup{notation}{
Name-sg = Notation ,
name-sg = Notation ,
Name-pl = Notations ,
name-pl = Notations ,
}

\AddToHook{env/question/begin}{\zcsetup{countertype={environment=question}}}
\zcRefTypeSetup{question}{
Name-sg = Question ,
name-sg = Question ,
Name-pl = Questions ,
name-pl = Questions ,
}

\AddToHook{env/problem/begin}{\zcsetup{countertype={environment=problem}}}
\zcRefTypeSetup{problem}{
Name-sg = Problem ,
name-sg = Problem ,
Name-pl = Problems ,
name-pl = Problems ,
}

\AddToHook{env/claim/begin}{\zcsetup{countertype={environment=claim}}}
\zcRefTypeSetup{claim}{
Name-sg = Claim ,
name-sg = Claim ,
Name-pl = Claims ,
name-pl = Claims ,
}

\AddToHook{env/definition/begin}{\zcsetup{countertype={environment=definition}}}
\zcRefTypeSetup{definition}{
Name-sg = Definition ,
name-sg = Definition ,
Name-pl = Definitions ,
name-pl = Definitions ,
}

\zcRefTypeSetup{figure}{
Name-sg = Figure ,
name-sg = Figure ,
Name-pl = Figures ,
name-pl = Figures ,
}

\usetikzlibrary{calc}
\usetikzlibrary{fit}
\usetikzlibrary{decorations}
\usetikzlibrary{decorations.pathmorphing}
\usetikzlibrary{decorations.text}
% \usetikzlibrary{external}
\usetikzlibrary{shapes,hobby}

\tikzset{
	position/.style args={#1:#2 from #3}{
		at=($(#3)+(#1:#2)$)
	}
}

\tikzset{
%%%%% Vertex Styles %%%%%
  v:main/.style = {draw, circle, scale=0.8, thick,fill=black,inner sep=0.7mm},
  v:ghost/.style = {inner sep=0pt,scale=1},
  %%%%% %%%%% %%%%%
  %%%%% Edge Styles %%%%%
  >={latex},
  e:marker/.style = {line width=8.5pt,line cap=round,opacity=0.35,color=DarkGoldenrod},
  e:main/.style = {line width=1pt},
}

%%%%%%%%%%%%%%%%%%%%%%%%%% macros %%%%%%%%%%%%%%%%%%%%%%%%%%%

\newcommand{\Ocal}{\mathcal{O}}
\newcommand{\Pcal}{\mathcal{P}}

\newcommand{\Nbbb}{\mathbb{N}}

\RequirePackage{stmaryrd}
\usepackage{textcomp}
\DeclareUnicodeCharacter{2286}{\subseteq}
\DeclareUnicodeCharacter{2192}{\ifmmode\to\else\textrightarrow\fi}
\DeclareUnicodeCharacter{2203}{\ensuremath\exists}
\DeclareUnicodeCharacter{183}{\cdot}
\DeclareUnicodeCharacter{2200}{\forall}
\DeclareUnicodeCharacter{2264}{\leq}
\DeclareUnicodeCharacter{2265}{\geq}
\DeclareUnicodeCharacter{8614}{\mathbin{\mapsto}}
\DeclareUnicodeCharacter{8656}{\Leftarrow}
\DeclareUnicodeCharacter{8657}{\Uparrow}
\DeclareUnicodeCharacter{8658}{\Rightarrow}
\DeclareUnicodeCharacter{8659}{\Downarrow}
\DeclareUnicodeCharacter{8669}{\rightsquigarrow}
\newcommand{\eqdef}{\stackrel{{\scriptsize\rm def}}{=}}
\DeclareUnicodeCharacter{8797}{\eqdef}
\DeclareUnicodeCharacter{8870}{\vdash}
\DeclareUnicodeCharacter{8873}{\Vdash}
\DeclareUnicodeCharacter{22A7}{\models}
\DeclareUnicodeCharacter{9121}{\lceil}
\DeclareUnicodeCharacter{9123}{\lfloor}
\DeclareUnicodeCharacter{9124}{\rceil}
\DeclareUnicodeCharacter{2208}{\in}
\DeclareUnicodeCharacter{9126}{\rfloor}
\DeclareUnicodeCharacter{9655}{\triangleright}
\DeclareUnicodeCharacter{9665}{\triangleleft}
\DeclareUnicodeCharacter{9671}{\diamond}
\DeclareUnicodeCharacter{9675}{\circ}
\DeclareUnicodeCharacter{10178}{\bot}
\DeclareUnicodeCharacter{10214}{} % needs stmaryrd
\DeclareUnicodeCharacter{10215}{} % needs stmaryrd
\DeclareUnicodeCharacter{10229}{\longleftarrow}
\DeclareUnicodeCharacter{10230}{\longrightarrow}
\DeclareUnicodeCharacter{10231}{\longleftrightarrow}
\DeclareUnicodeCharacter{10232}{\Longleftarrow}
\DeclareUnicodeCharacter{10233}{\Longrightarrow}
\DeclareUnicodeCharacter{10234}{\Longleftrightarrow}
\DeclareUnicodeCharacter{10236}{\longmapsto}
\DeclareUnicodeCharacter{10238}{\Longmapsto} % needs stmaryrd
\DeclareUnicodeCharacter{10503}{\Mapsto}    % needs stmaryrd
\DeclareUnicodeCharacter{10971}{\mathrel{\not\hspace{-0.2em}\cap}}
\DeclareUnicodeCharacter{65294}{\ldotp}
\DeclareUnicodeCharacter{65372}{\mid}

\newcommand{\tw}{\mathsf{tw}\xspace}%treewidth

\newcommand{\poly}{\textbf{poly}\xspace}%poly

\newcommand{\polylog}{\text{polylog}\xspace}%poly

\newcommand{\bw}{\mathsf{bw}\xspace}%branchwidth

\newcommand{\eg}{\mathsf{eg}\xspace}%Euler genus

\newcommand{\bg}{\text{$\mathsf{bg}$}\xspace}%biggest grid

\newcommand{\bd}{\text{$\mathsf{bd}$}\xspace}%boundary

\newcommand{\FPT}{\text{$\mathsf{FPT}$}\xspace}%FPT

\newcommand{\NP}{\text{$\mathsf{NP}$}\xspace}%NP

\title{Catching Rats in $H$-minor-free Graphs\thanks{Emails of the authors: \href{mailto:m.gorsky@pm.me}{m.gorsky@pm.me},
\href{mailto:stamoulis@irif.fr}{stamoulis@irif.fr},
\href{mailto:sedthilk@thilikos.info}{sedthilk@thilikos.info},
\href{mailto:wiederrecht@kaist.ac.kr}{wiederrecht@kaist.ac.kr}}}
\predate{}
\date{}
\postdate{}

\preauthor{}
\DeclareRobustCommand{\authorthing}{
	\begin{center}
		Maximilian Gorsky\thanks{Supported by the Institute for Basic Science (IBS-R029-C1).}~~\! \\
		{\small Discrete Mathematics Group, Institute for Basic Science (IBS), Daejeon, South Korea} \\
        \medskip
        Giannos Stamoulis  \\
		{\small Université Paris Cité, CNRS, IRIF, F-75013, Paris, France} \\
        \medskip
		Dimitrios M.\ Thilikos\thanks{Supported by the French-German Collaboration ANR/DFG Project UTMA (ANR-20-CE92-0027), the ANR project GODASse ANR-24-CE48-4377,  and by the Franco-Norwegian project PHC AURORA 2024-2025 (Projet n°\! 51260WL).}   \\
		{\small LIRMM, Univ Montpellier, CNRS, Montpellier, France} \\ 
        \medskip
		Sebastian Wiederrecht \\
		{\small KAIST, Daejeon, South Korea}
\end{center}}
\author{\authorthing}
\postauthor{}

\begin{document}
\maketitle

\begin{abstract}
\noindent We show that every $H$-minor-free graph that also excludes a $(k \times k)$-grid as a minor has treewidth/branchwidth bounded from above by a function $f(t,k)$ that is linear in $k$ and polynomial in $t\coloneqq |V(H)|$.
Such a result was proven originally by [\textsl{Demaine \& Hajiaghayi, Combinatorica, 2008}], where $f$ was indeed linear in $k$.
However the dependency in $t$ in this result was non-explicit (and huge).
Later, [\textsl{Kawarabayashi \& Kobayashi, JCTB, 2020}] showed that this bound can be estimated to be $f(t,k)\in 2^{\Ocal(t\log t)}\cdot k$.
Wood recently asked whether $f$ can be pushed further to be polynomial, while maintaining the linearity on $k$.
We answer this in a particularly strong sense, by showing that the treewidth/branchwidth of $G$ is in $\Ocal(gk + t^{2304}),$ where $g$ is the Euler genus of $H$.
This directly yields $f(t,k)= \Ocal(t^2k + t^{2304})$.

Our methods build on techniques for branchwidth and on new bounds and insights for the Graph Minor Structure Theorem (GMST) due to [\textsl{Gorsky, Seweryn \& Wiederrecht, 2025, arXiv:2504.02532}].
In particular, we prove a variant of the GMST that ensures some helpful properties for the minor relation.
We further employ our methods to provide approximation algorithms for the treewidth/branchwidth of $H$-minor-free graphs.
In particular, for every $\varepsilon > 0$ and every $t$-vertex graph $H$ with Euler genus $g$, we give a $(g+\varepsilon)$-approximation algorithm for the branchwidth of $H$-minor-free graphs running in $2^{\poly(t)/\varepsilon}\cdot \poly(n)$-time.
Our algorithms explicitly return either an appropriate branch-decomposition or a grid-minor certifying a negative answer.
\end{abstract}
\let\sc\itshape
\thispagestyle{empty}

\newpage
\tableofcontents
\newpage

\setcounter{page}{1}

\section{Introduction}\label{sec_intro}
Treewidth is one of the most extensively studied parameters in structural graph theory.
Together with its closely related parameter branchwidth (introduced in \cite{RobertsonS1991Graph}), treewidth has served as a cornerstone of the Graph Minors Series \cite{RobertsonS2004Graph} (see \zcref{sec_prelims} for the definitions of branch- and treewidth).
Given a graph $G$, we use $\tw(G)$ (resp.\ $\bw(G)$) to denote the treewidth (resp.\ branchwidth) of $G$.
Both parameters are defined via tree-like decompositions that capture how closely a graph resembles a tree in its topological structure.
From a structural perspective, these two parameters are known to reflect the same underlying notion of ``tree-likeness.''
Indeed, for every non-acyclic graph $G$, it holds that $\bw(G)-1 \leq \tw(G) \leq \lfloor \nicefrac{3}{2}\cdot \bw(G)\rfloor-1$ (see~\cite{RobertsonS1991Graph}).
Variants and generalizations of these parameters appear widely across a broad range of applications in structural graph theory~\cite{HarveyW2017Parameters,AmarFG2009Degree,JohnsonRST01Direct,HlinenyW06Matro,JeongST2015Maximum}.   
Their true significance, however, becomes most apparent in algorithmic graph theory, where many problems can be solved efficiently, via dynamic programming, when restricted to graphs of bounded treewidth (or branchwidth)~\cite{Courcelle1990Monadic,BoriePT92auto,ArnborgLS91easy}.
Moreover, since computing both parameters is \NP-complete~\cite{SeymourT1994Call,ArnborgCP87Compl}, extensive research has focused on the design of \FPT-algorithms (i.e., algorithms deciding whether $\tw(G)\leq k$ (resp.\ $\bw(G)\leq k$) in $f(k)\cdot n^{\Ocal(1)}$ time for some computable function $f$)~\cite{BodlaenderK96Effic,BodlaenderT1997Constructive,KorhonenL23Impro}, approximation algorithms~\cite{BodlaenderGHK95Approx,FeigeHL08impro}, and \FPT-approximation algorithms~\cite{FominK24Fast,Korhonen21Asignle}.   
A particularly noteworthy island of tractability for branchwidth is the class of planar graphs.
Although the branchwidth of planar graphs is unbounded, it can be computed in polynomial time, thanks to the celebrated ``Ratcatcher algorithm’’ of Seymour and Thomas \cite{SeymourT1994Call}.
This, in turn, yields a constant-factor approximation for treewidth on planar graphs.

\subsection{The Grid Theorem and its Applications}
One of the most fundamental results related to treewidth is the celebrated \textsl{Grid Theorem}, which asserts the existence of a function $h\colon\Nbbb\to\Nbbb$ such that if a graph excludes a $(k \times k)$-grid\footnote{The \emph{$(k \times k)$-grid} is the Cartesian product of two paths on $k$ vertices.} as a minor\footnote{A \emph{minor} of a graph $G$ is constructed by taking a subgraph of $G$ and ``contracting'' edges within it. Here, by contraction, we mean identification of two vertices that share an edge, followed by the deletion of any loops or parallel edges that may appear.}, then its treewidth (or branchwidth) is bounded above by $h(k)$.

This theorem was first established in \cite{RobertsonS1986Grapha}, and the function $h$ has since been progressively improved in subsequent works \cite{RobertsonST1994Quickly,LeafS2015Treewidth,KawarabayashiK2020Linear}. Most notably, Chuzhoy and Tan proved that $h(k) \in \mathcal{O}(k^9 \cdot \polylog(k))$ \cite{ChekuriC2016Polynomial,ChuzhoyT2021Tighter}, thereby establishing a polynomial upper bound. Since the $(k \times k)$-grid has treewidth at least $k$, the theorem yields a min-max equivalence between the treewidth of a graph $G$ and the largest integer $k$ such that $G$ contains a $(k \times k)$-grid minor. This maximum $k$ is called the \emph{biggest grid number} of $G$, denoted by $\bg(G)$.

The above min-max equivalence is especially useful in algorithm design. It enables the following win/win algorithmic strategy:  if a $(k \times k)$-grid minor is present, it can directly lead to a solution, while its absence guarantees bounded treewidth, enabling dynamic programming techniques. Consequently, improving bounds on the function $h$ in the Grid Theorem has direct and far-reaching algorithmic implications, as it refines parameter dependencies in algorithms that rely on such strategies.

It is known that, in general, the function $h$ cannot be improved beyond $\Ocal(k^2 \log k)$ (see e.g.\ \cite{Thilikos12Graphm}), although tighter bounds exist for special classes of graphs. In particular, when the input graph belongs to a minor-closed class, the relationship between treewidth (or branchwidth) and the biggest grid number becomes \textsl{linear}. This was first established by Demaine and Hajiaghayi \cite{DemaineH2008Linearity}, who proved the existence of a function $f\colon\Nbbb\to\Nbbb$ such that for every graph excluding a $t$-vertex graph $H$ as a minor, the following holds:
\begin{eqnarray}
\tw(G) ≤ f(t) \cdot \bg(G). \label{such_no}
\end{eqnarray}

This linear relationship has important algorithmic consequences. In particular, it enables sublinear treewidth bounds for many parameterized problems on $H$-minor-free graphs. To illustrate this, consider two representative examples: \textsc{Feedback Vertex Set} and \textsc{Longest Path}. In both problems, the input is a pair $(G, k)$. \textsc{Feedback Vertex Set} asks for a set of at most $k$ vertices that intersects all cycles in $G$, while \textsc{Longest Path} asks for a path with at least $k$ vertices in $G$.

Thanks to \eqref{such_no}, instances $(G, k)$ where $G$ excludes $K_t$ as a minor can be efficiently reduced\footnote{To see this, observe that a $(\lceil \sqrt{k} \rceil \times \lceil \sqrt{k} \rceil)$-grid minor in $G$ certifies that $(G, k)$ is a \textsf{no}-instance (resp.\ \textsf{yes}-instance) of \textsc{Feedback Vertex Set} (resp.\ \textsc{Longest Path}). Hence, one may assume that the input graph has treewidth sublinear in $k$.} to graphs with treewidth $\tw(G) = \Ocal_{t}(\sqrt{k})$.\footnote{By $g(n)=\Ocal_{k}(f(n))$ we mean that there is a function $h\colon\Nbbb\to\Nbbb$ such that $g(n)=\Ocal(h(k)\cdot f(n))$.}

Problems exhibiting this behaviour are typically called \textsl{bidimensional} (see \cite{DemaineFHT2005Subexponential}). The sublinear treewidth condition, combined with standard dynamic programming—often running in $2^{\Ocal(\tw)} \cdot n^{\Ocal(1)}$ or $2^{\Ocal(\tw \log \tw)} \cdot n^{\Ocal(1)}$ time—yields subexponential parameterized algorithms. For example, using techniques from \cite{CyganNPPRW22Solving} (see also \cite{BodlaenderCKN15Deterministic,DornPBF10Efficient,DornFT08Catalan,BasteST21Hitting}), both \textsc{Feedback Vertex Set} and \textsc{Longest Path} can be solved in $2^{\Ocal(\tw)} \cdot n^{\Ocal(1)}$ time. This, in turn, implies subexponential parameterized algorithms running in $2^{\Ocal_t(\sqrt{k})} \cdot n^{\Ocal(1)}$ time for both problems on $K_t$-minor-free graphs.

The need to establish relations like \eqref{such_no} between $\tw$ and $\bg$ extends beyond the design of subexponential parameterized algorithms (as discussed in \cite{DemaineFHT2005Subexponential}). Such relations have proven crucial in deriving linear kernels \cite{FominLST20Bidimensionality} and EPTASs \cite{FominLS18Excluded} for bidimensional problems (see also \cite{FominLST20Bidimensionality,Demaine10algo,DemaineHT06theb,DemaineFHT04Bidimensional,BasteT22Contraction}).

More broadly, these techniques have inspired a family of algorithms with running times of the form:
\begin{eqnarray}
2^{\Ocal_t(\sqrt{k})} \cdot \Ocal(n^c) \quad \text{or} \quad 2^{\Ocal_t(\sqrt{k} \log k)} \cdot \Ocal(n^c),\label{do_ops}
\end{eqnarray}
for problems on $H$-minor-free graphs.
In \eqref{do_ops} $c$ is typically some small integer constant. This phenomenon—termed the “square root phenomenon” by Marx~\cite{Marx13Thesquare}—is pervasive in the context of parameterized algorithms on minor-closed graph classes. Intuitively, such dependencies stem from the structural properties of $H$-minor-free graphs and have also been explored in broader graph classes \cite{BasteT22Contraction,BertheB0R24Subexponential,BertheBGR24Feedback}.

We assume henceforth that $t\coloneqq |V(H)|$ and that $n$ is the size of the input graph. 
Notice that in running times resembling those in \eqref{do_ops}, the influence of the excluded graph $H$ is ``hidden’’ in the $\Ocal_t$ notation. This has motivated efforts to determine \textsl{explicit} bounds for the function $f$ in \eqref{such_no}, as such bounds were not provided in the original result by Demaine and Hajiaghayi~\cite{DemaineFHT2005Subexponential}. In this direction, Kawarabayashi and Kobayashi~\cite{KawarabayashiK2020Linear} established that  $f(t) = 2^{\Ocal(t \log t)}$.

Recently, David Wood \cite{Wood2025Personal} posed the following question:
\begin{eqnarray}
\mbox{Is there a \textsl{polynomial} function $f$ such that for every graph $G$, we have $\tw(G) ≤ f(t) \cdot \bg(G)$?}\label{main_que}
\end{eqnarray}

\subsection{Combinatorial results}
In this paper, we resolve the question in \eqref{main_que} positively in a way that reveals the pivotal role of the Euler genus $g_H := \eg(H)$ of the excluded graph $H$. We prove that for every $H$-minor-free graph $G$,
\begin{equation}
\tw(G) = \Ocal(g_H \cdot \bg(G) + t^{2304}). \label{ral_iop}
\end{equation}

Our bound in \eqref{ral_iop} reveals that the contribution of the excluded graph $H$ in the aforementioned subexponential algorithms depends primarily on its Euler genus $g_H$, rather than just its size $t$. Specifically, our result implies that the aforementioned subexponential algorithms  run in time
\[
2^{\Ocal(g_H \cdot \sqrt{k})} \cdot \Ocal_t(n^c) \quad \text{or} \quad 2^{\Ocal(g_H \cdot \sqrt{k} \log k)} \cdot \Ocal_t(n^{c}),
\]
highlighting that the parameter dependence is fundamentally governed by the topological complexity of $H$, as captured by its Euler genus.

\paragraph{Excluding graphs and grids.}
Given that $\eg(H) = \Ocal(t^2)$, \eqref{ral_iop} can be simplified to the following.

\begin{theorem}\label{thm_amaingrid}
For every $t$-vertex graph $H$,  every $H$-minor-free graph $G$ that excludes 
a $(k\times k)$-grid as a minor, has treewidth/branchwidth  in $\Ocal(t^2k + t^{2304})$.
\end{theorem}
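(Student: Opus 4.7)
The plan is to derive the theorem as a direct corollary of the refined bound $\tw(G)=\Ocal(g_H\cdot\bg(G)+t^{2304})$ announced in \eqref{ral_iop}. Every $t$-vertex graph $H$ satisfies $\eg(H)=\Ocal(t^2)$: this follows from the trivial edge bound $|E(H)|\le \binom{t}{2}$ combined with Euler's formula for minimum-genus embeddings. Substituting $g_H=\Ocal(t^2)$ into \eqref{ral_iop} together with $\bg(G)\le k-1$ (since $G$ excludes a $(k\times k)$-grid minor) yields the treewidth bound $\Ocal(t^2 k+t^{2304})$, and the matching branchwidth bound follows from the standard inequality $\bw(G)-1\le \tw(G)\le \lfloor \nicefrac{3}{2}\bw(G)\rfloor-1$.

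The substance of the argument therefore lies in establishing \eqref{ral_iop}. For that I would begin by invoking the variant of the Graph Minor Structure Theorem, alluded to in the abstract, that ensures good interaction with the minor relation and whose parameter dependencies are polynomial in $t$ thanks to the recent work of Gorsky, Seweryn and Wiederrecht. This decomposes $G$ as a (near-)clique-sum of torsos $G_i$, each of which is almost-embeddable in a surface $\Sigma_i$ of Euler genus at most $g_H$, together with at most $\poly(t)$ apex vertices and a bounded number of vortices of depth at most $\poly(t)$ attached to faces of the embedding.

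The next step is to bound the branchwidth of each piece $G_i$ in terms of its biggest grid minor. For the embedded part, a surface-variant of the Ratcatcher argument, or equivalently the genus-linear form of the Grid Theorem, provides a bound of $\Ocal(g_H\cdot\bg(G_i))$. The apex vertices and vortex-adhesion sets each add at most $\poly(t)$ to the branchwidth, producing per-piece bounds of $\Ocal(g_H\cdot\bg(G_i)+t^{2304})$, where the exponent $2304$ reflects the current polynomial control on the combined vortex and apex parameters via the cited structural results. Finally, branchwidth is subadditive under clique-sums of bounded adhesion, so reassembling the pieces reproduces a global bound of the same form, using that $\bg(G_i)\le\bg(G)$.

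The main obstacle I anticipate is the vortex handling. Vortices are non-embedded and a grid minor in $G_i$ may in principle traverse them, so the new minor-friendly GMST variant must guarantee that large grid minors can be realized essentially inside the embedded surface part, with only the $\poly(t)$ vortex depth paying an additive cost. Achieving this requires precisely the new structural variant promised in the abstract, together with the polynomial bounds on apex, vortex, and adhesion sizes from the Gorsky--Seweryn--Wiederrecht framework; without these ingredients, one falls back to the exponential $2^{\Ocal(t\log t)}\cdot k$ dependence of~\cite{KawarabayashiK2020Linear}.
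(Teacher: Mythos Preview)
Your proposal is correct and follows essentially the same route as the paper: derive \Cref{thm_amaingrid} from \eqref{ral_iop} via $\eg(H)=\Ocal(t^2)$, and prove \eqref{ral_iop} by applying the polynomial GMST variant, cutting the surface part of each high-branchwidth torso down to the sphere along short non-contractible curves (paying $\Ocal(g_H k)$ plus $\poly(t)$ for vortex adhesions per cut), and then invoking the planar grid theorem on the resulting minor of $G$. Your anticipated obstacle is exactly the one the paper resolves: the new GMST variant is engineered so that the embedded $\rho$-torso without vortex interiors is a genuine minor of $G$, which is precisely what lets a grid found there transfer back to $G$.
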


Also our proof of \eqref{ral_iop} yields the following the following stronger statement regarding induced minors, which are found by taking \textsl{induced} subgraphs and performing edge contractions.

\begin{theorem}\label{thm_inducedgrid}
If a graph $G$ excludes both $K_{t}$ and the  $(k\times k)$-grid as an induced minor, then 
it has treewidth/branchwidth  in $\Ocal(t^2k + t^{2304})$.
\end{theorem}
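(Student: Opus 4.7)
The plan is to reduce \cref{thm_inducedgrid} to \cref{thm_amaingrid} via an equivalence that is specific to complete graphs, and then upgrade the grid minor provided by the proof of \cref{thm_amaingrid} to an induced grid minor at no asymptotic cost.

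First, I would observe that for the complete graph $K_t$, excluding $K_t$ as an induced minor is equivalent to excluding it as a minor. Indeed, any minor model of $K_t$ consists of disjoint connected branch sets $\{B_v\}_{v \in V(K_t)}$ with an edge between $B_u$ and $B_v$ for each pair $uv$, and the additional non-adjacency constraints imposed by the induced minor relation are vacuous since $K_t$ has no non-edges. Hence the hypothesis of \cref{thm_inducedgrid} becomes: $G$ is $K_t$-minor-free and contains no induced $(k \times k)$-grid minor. It therefore suffices to show that a $K_t$-minor-free graph of branchwidth exceeding $c(t^2 k + t^{2304})$, for a suitable constant $c$, contains an induced $(k \times k)$-grid minor.

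Next, I would re-run the proof of \cref{thm_amaingrid} (equivalently \eqref{ral_iop}) and track where the grid minor is produced. As the abstract indicates, that proof employs a refined version of the Graph Minor Structure Theorem, which supplies the $(k \times k)$-grid minor inside a \emph{flat} region of $G$: a plane subgraph $D$ embedded in a disk, separated from the rest of $G$ by a small interface and accompanied by a bounded set of apex vertices. Within $D$ the grid minor is planar by construction, so chord edges between non-adjacent branch sets that live inside $D$ are sparse and can be controlled by planarity. The only remaining obstruction to induced-ness is therefore chord edges contributed by vertices \emph{outside} $D$, namely from the interface and from apex vertices attached to the flat region.

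The main obstacle is controlling these outside chord edges while extracting an induced sub-grid of the required size, and this is precisely the purpose of the refined GMST cited in the abstract. That variant guarantees that the apex set and the interface have polynomial-in-$t$ size and structurally restricted attachments, so that collectively they touch only a bounded number of rows or columns of a suitably refined flat wall. Given such control, a standard sub-grid extraction argument---combined with absorbing apex vertices into adjacent branch sets where necessary, and deleting a bounded number of boundary rows or columns---yields a $(k \times k)$ sub-grid whose branch sets induce exactly the desired grid pattern in $G$. Since the dependence on $t$ coming from the refined GMST is polynomial and is absorbed into the additive $t^{2304}$ term of the bound, this upgrade preserves the branchwidth bound $\Ocal(t^2 k + t^{2304})$ from \cref{thm_amaingrid}.
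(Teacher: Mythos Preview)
Your reduction in the first paragraph is correct and matches the paper: since $K_t$ has no non-edges, excluding $K_t$ as an induced minor is the same as excluding it as a minor, so one is left with showing that a $K_t$-minor-free graph of sufficiently large branchwidth contains the $(k\times k)$-grid as an \emph{induced} minor.

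The second half of your plan, however, misidentifies the obstruction and proposes an unjustified fix. You worry about chord edges ``contributed by vertices outside $D$, namely from the interface and from apex vertices'', and then plan to absorb apex vertices into branch sets and appeal to ``structurally restricted attachments'' so that apices ``touch only a bounded number of rows or columns''. Neither of these steps is sound. First, the apex vertices are never placed in any branch set of the grid model, so edges incident to apices cannot be chord edges between two branch sets; chords can only arise from edges with \emph{both} endpoints inside the model, hence inside the surface-embedded region. Second, the refined GMST used in the paper gives no bound on how many rows or columns an apex can touch---an apex may be adjacent to every vertex of the flat part---so the claimed control is simply not available, and absorbing an apex into one branch set would in general create chords to many other branch sets.

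The paper's argument is both simpler and different in mechanism. In the proof of \cref{thm_woodsquestion} the grid minor is always located inside the graph $B$ obtained from a torso by deleting the apex set and the vortex interiors; this $B$ is a minor of $G$ and is genuinely embedded on a surface. One then runs the argument with $2k$ in place of $k$, obtaining a $(2k\times 2k)$-grid minor whose branch sets lie entirely in this surface-embedded region. Because the whole model is surface-embedded, extra edges between branch sets are controlled by the embedding, and one ``sacrifices'' half of the $2k$-grid to absorb them, yielding an induced $(k\times k)$-grid minor. No apex-attachment analysis is needed.
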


\paragraph{Apex graphs.}
An \emph{apex} graph is a graph in which there exists a vertex whose removal makes the graph planar.
Recently Hendrey and Wood \cite{HendreyW2025Polynomial} improved a well-known theorem on graphs of bounded radius $r$ excluding an apex graph with $t$ vertices as a minor, originally proven by Eppstein with worse bounds \cite{Eppstein2000Diameter}, by showing that such graphs have treewidth in $\Ocal^*(r^9t^{18})$, where the $\Ocal^*$ notation hides poly-logarithmic factors.
According to the results of Kawarabayashi and Kobayashi in \cite{KawarabayashiK2020Linear}, the dependency on $r$ can be pushed to be linear, though the bound features a factor that is \textsl{exponential} in $t$.
Using \zcref{thm_amaingrid} and the proofs presented in \cite{HendreyW2025Polynomial}, this can be improved in a straightforward fashion to the following.

\begin{theorem}
    For any apex graph $A$ with $t$ vertices, every $A$-minor-free graph $G$ of radius at most $r$ has treewidth in $\Ocal(t^4r + t^{2304})$.
\end{theorem}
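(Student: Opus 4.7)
The plan is to combine \cref{thm_amaingrid} with the scheme of Hendrey and Wood~\cite{HendreyW2025Polynomial}, simply replacing their invocation of the general polynomial grid theorem (Chuzhoy--Tan) by the sharper linear-in-$\bg$ bound of \cref{thm_amaingrid}. All of the work then happens inside the Hendrey--Wood argument, which is essentially quantitative Eppstein.

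First, I would extract from the argument of~\cite{HendreyW2025Polynomial} the intermediate statement that every $A$-minor-free graph $G$ of radius at most $r$ satisfies $\bg(G) = \Ocal(t^2 r)$, where $t = |V(A)|$ and $A$ is apex. The intuition is that the planar part $A - a$ of $A$ has at most $t-1$ vertices and is therefore a minor of some $\Ocal(t) \times \Ocal(t)$-grid. Hence a $(k \times k)$-grid minor with $k = \Omega(t^2 r)$ can be tiled into $\Omega(t^2)$ vertex-disjoint subgrids of side $\Omega(r)$, each of which hosts a copy of $A - a$ as a minor. Using a center vertex $v$ of $G$, whose BFS-tree has depth at most $r$ and reaches every vertex, one tries to use $v$ together with portions of the BFS-tree as a branch-set for the apex $a$ attached to one of these copies. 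A pigeonhole argument over the $\Omega(t^2)$ subgrids produces a copy of $A - a$ whose attachment to $v$ is disjoint from the planar model itself, yielding $A$ as a minor of $G$ and contradicting $A$-minor-freeness.

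Second, I would feed the bound $\bg(G) = \Ocal(t^2 r)$ into \cref{thm_amaingrid}, which yields
\[
\tw(G) \;=\; \Ocal(t^2 \cdot \bg(G) + t^{2304}) \;=\; \Ocal\bigl(t^2 \cdot t^2 r + t^{2304}\bigr) \;=\; \Ocal(t^4 r + t^{2304}),
\]
as required. The reason Hendrey and Wood obtain $\Ocal^*(r^9 t^{18})$ rather than $\Ocal(r t^4)$ is precisely the exponent $9$ coming from the Chuzhoy--Tan grid theorem, which is avoided here.

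The main obstacle is Step~1, but it is not a new obstacle: it is exactly the quantitative content already developed in~\cite{HendreyW2025Polynomial}, and only the clean linear-in-$r$ polynomial-in-$t$ extraction of $\bg(G) = \Ocal(t^2 r)$ needs to be stated explicitly (stripping any polylog factors in $r,t$ that come from their use of the grid theorem, since we no longer route through it). Once this is isolated, the application of \cref{thm_amaingrid} is immediate.
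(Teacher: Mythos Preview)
Your proposal is correct and matches the paper's own approach exactly: the paper simply states that the theorem follows ``in a straightforward fashion'' by combining \cref{thm_amaingrid} with the proofs in \cite{HendreyW2025Polynomial}, which is precisely the two-step plan you outline (extract $\bg(G)=\Ocal(t^2r)$ from Hendrey--Wood, then plug into \cref{thm_amaingrid}). Your back-of-the-envelope check that $\Ocal^*(r^9t^{18})$ in \cite{HendreyW2025Polynomial} decomposes as Chuzhoy--Tan applied to a clean $\bg(G)=\Ocal(t^2r)$ bound is also on target.
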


\subsection{Approximation algorithms}
There is a long line of research on approximating treewidth and branchwidth. Since the two parameters are linearly related, any approximation algorithm for one immediately implies an approximation algorithm for the other.

An early result by Bodlaender, Gilbert, Hafsteinsson, and Kloks~\cite{BodlaenderGHK95Approx} established a polynomial-time $\Ocal(\log n)$-approximation algorithm for both parameters. Later, Feige, Hajiaghayi, and Lee~\cite{FeigeHL08impro} gave a randomized polynomial-time algorithm achieving an improved approximation factor of $\Ocal(\sqrt{\log n})$. On the negative side, computing treewidth within an additive error of $n^\varepsilon$ for some $\varepsilon > 0$ is \NP-hard~\cite{BodlaenderGHK95Approx}, and Austrin, Pitassi, and Wu~\cite{WuAPL15Inapproximability} showed that no constant-factor approximation exists under the Small Set Expansion Conjecture. More recently, Bonnet proved that approximating treewidth within a factor of $1.00005$ is \NP-hard~\cite{Bonnet25Treewidth}.

For restricted graph classes, better approximation algorithms are known. In particular, both parameters admit constant-factor approximations on asteroid-triple-free graphs~\cite{BouchitteT03Approximating}. Similarly, for $H$-minor-free graphs, Feige et al.~\cite{FeigeHL08impro} provided a polynomial-time $\Ocal(t^2)$-approximation algorithm when $H$ is a $t$-vertex graph. This raises the natural question of whether this dependence on $t$ can be improved.

The proof of \eqref{ral_iop}, that is the base of all our combinatorial results, is algorithmic. In \zcref{sec_Hminorfree} we derive from it the following approximation algorithm for treewidth/branchwidth.

\begin{theorem}
\label{main_alg}
There exists an algorithm that, given $k \in \Nbbb$, a $t$-vertex graph $H$ of Euler genus $g_H$, and an $H$-minor-free graph $G$, outputs either a $(k \times k)$-grid minor of $G$ or a tree/branch-decomposition of $G$ of width 
$\Ocal(g_H \cdot k + (g_H + 1) \cdot \poly(t))$, in $2^{\poly(t)} \cdot \poly(n)$ time.
\end{theorem}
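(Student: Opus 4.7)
The plan is to turn the existence proof behind \eqref{ral_iop} into an explicit procedure that either returns the promised branch-decomposition of $G$ or produces a $(k\times k)$-grid minor as a certificate. The algorithm follows a win/win schema aligned with the proof of \cref{thm_amaingrid}: first, we decompose $G$ via the variant of the Graph Minor Structure Theorem (GMST) tailored to the minor relation that the paper establishes, and then on each torso we either certify small branchwidth or extract a large grid. A constant-factor FPT approximation of branchwidth (in the style of Korhonen or Fomin--Korhonen) is used as a black box to commit to one of the two outcomes.

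First, I would invoke an algorithmic version of the refined GMST on $G$. This produces, in $2^{\poly(t)}\cdot\poly(n)$ time, a tree-decomposition $(T,\beta)$ whose torsos are $\poly(t)$-almost-embeddable in a surface of Euler genus at most $g_H$, with $\poly(t)$ apex vertices and $\poly(t)$ vortices of depth $\poly(t)$. Since the adhesion is $\poly(t)$ and the structure-tree itself has branchwidth $\poly(t)$, later gluing branch-decompositions of the torsos along $(T,\beta)$ contributes at most the additive $(g_H+1)\cdot\poly(t)$ term appearing in the target bound.

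Second, on each torso I would run a constructive branchwidth routine for almost-embeddable graphs. Combining the approximation certificate with an algorithmic Grid Theorem for bounded-genus graphs, we either produce a branch-decomposition of the torso of width $\Ocal(g_H\cdot k+\poly(t))$ or locate a tangle-style obstruction whose routing through the embedded region yields a $(k\times k)$-grid minor. A grid minor confined to a single torso lifts to a grid minor of $G$ because apex vertices and vortices touch only $\poly(t)$-many vertices and the separators have size $\poly(t)$, so at most a constant-factor blow-up is needed and it is absorbed in the additive $\poly(t)$ term. Finally, the per-torso branch-decompositions are glued along $(T,\beta)$ in the standard fashion to obtain a global branch-decomposition of $G$ of the required width.

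The main obstacle is twofold. On the structural side, we need an algorithmic rendering of the refined GMST running within a $2^{\poly(t)}\cdot\poly(n)$ budget, with no exponential-in-$t$ blow-up hidden in locating apices, vortices, or the surface embedding, while still preserving the tameness with respect to the minor relation that is essential to lift grid minors from torsos back to $G$. On the bounded-genus side, the negative branch must deliver a grid of side exactly $k$ using only a linear-in-$k$ budget of branchwidth: this is the step that converts the combinatorial bound $\Ocal(g_H\cdot k)$ into an algorithmic one, and it is where a Seymour--Thomas-type tangle/ratcatcher machinery has to be adapted from the plane to surfaces of positive genus in a way that does not inflate the $t$-dependence.
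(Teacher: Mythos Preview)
Your high-level skeleton matches the paper: apply the GMST variant that makes the $\rho$-torso (minus vortices) a genuine minor of $G$, handle each torso, then glue along the tree-decomposition at additive cost $\poly(t)$. The gap is entirely in the per-torso step, and neither mechanism you propose closes it. A generic constant-factor FPT branchwidth approximation has nothing to say about $g_H$ and does not hand you a grid; and adapting Ratcatcher to positive genus, which you rightly flag as an obstacle, is not what the paper does and is not known to be possible with the required dependence.

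The paper's per-torso routine is a surface-cutting reduction to the sphere. After removing the apex set and replacing each vortex by a single dummy vertex, one has a 2-cell embedding in a surface of genus at most $g_H$. If the representativity is at least $k+\poly(t)$, branchwidth is already $\ge k$ (and an explicit grid model can be extracted, at an extra polynomial-in-$g_H$ cost, via the tools in \cref{subsec:findgridbdgenus}). Otherwise a shortest non-contractible curve meets fewer than $k+\poly(t)$ vertices; deleting those, together with $\poly(t)$ separators through any vortices the curve crosses, strictly drops the total genus. After at most $g_H$ such cuts one is on the sphere, where the planar Ratcatcher applies directly; the vortices are then reinserted into the resulting sphere-cut decomposition at additive $\poly(t)$ cost via \cref{thm_spherewithoutvorticeshighbw}. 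The $g_H$ rounds of cutting, each adding at most $k+\poly(t)$ further apices, are precisely the source of the $\Ocal(g_H\cdot k + (g_H+1)\poly(t))$ bound; without this cutting idea the claimed dependence on $g_H$ and $k$ is unaccounted for.
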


By leveraging the randomized algorithm of Chekuri and Chuzoy~\cite{ChekuriC2016Polynomial}, the output guaranteed by \zcref{main_alg} can also be achieved with high probability in $(t+n)^{\Ocal(1)}$ time.

As a consequence, we obtain a $\Ocal(g_{H})$-approximation algorithm for computing $\bw$, $\tw$, and $\bg$ on $H$-minor-free graphs. Since the size of $H$ and its Euler genus $g_H$ can differ substantially (e.g., when $H$ is planar), this result constitutes a significant improvement over the previous $\Ocal(|V(H)|^2)$-approximation bounds for branchwidth and treewidth~\cite{FeigeHL08impro}.

Finally, we observe that—particularly for the branchwidth parameter—the approximation factor can be made \textsl{arbitrarily close} to the Euler genus $g_H$ of the excluded graph $H$.

\begin{theorem}
There exists an algorithm that, given a $t$-vertex graph $H$ with Euler genus $g_H$ and an $H$-minor-free graph $G$, outputs a value $b$ such that $b \leq \bw(G) \leq (g_H + \varepsilon) b$, in $2^{\poly(t)/\varepsilon} \cdot \poly(n)$ time.
\end{theorem}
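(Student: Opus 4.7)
The plan is to dispatch \cref{main_alg} via a case split on the magnitude of $\bw(G)$, choosing the threshold so that the additive $\poly(t)$ overhead in the large-branchwidth regime becomes negligible relative to $\varepsilon\cdot\bw(G)$. First I would revisit the proof of \cref{main_alg} and track constants explicitly: at parameter $k$, the branch-decomposition it produces has width at most $g_H\cdot k + C\cdot g_H\cdot\poly(t)$, where the leading coefficient in front of $g_H\cdot k$ is exactly $1$ and $C$ is an absolute constant. Sharpening this hidden $\Ocal$ constant is the key combinatorial input, and is the reason the theorem is formulated specifically for branchwidth rather than for treewidth.

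With this sharp bound, set $\tau := \lceil 2Cg_H\poly(t)/\varepsilon\rceil$ and apply \cref{main_alg} at parameter $\tau$. If it returns a branch-decomposition (no $(\tau\times\tau)$-grid minor is found), then $\bw(G) = \Ocal(g_H\cdot\poly(t)/\varepsilon) = \Ocal(\poly(t)/\varepsilon)$. In this small-branchwidth regime I would invoke an exact \FPT{} algorithm for branchwidth (e.g.\ the $2^{\Ocal(k)}\cdot n$-time algorithm of Korhonen, or Bodlaender-Thilikos) at parameter $\Ocal(\poly(t)/\varepsilon)$, which computes $\bw(G)$ exactly within the total budget $2^{\poly(t)/\varepsilon}\cdot\poly(n)$; returning $b:=\bw(G)$ achieves ratio $1\le g_H+\varepsilon$ trivially.

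Otherwise \cref{main_alg} returns a $(\tau\times\tau)$-grid minor. I would then binary-search with \cref{main_alg} to locate the largest $k^*\geq \tau$ admitting a $(k^*\times k^*)$-grid minor, together with a branch-decomposition of width at most $g_H(k^*+1)+Cg_H\poly(t)$. The grid certifies $b:=k^* \leq \bg(G)\leq \bw(G)$, while the choice $\tau\geq 2Cg_H\poly(t)/\varepsilon$ forces $g_H + Cg_H\poly(t)\leq\varepsilon k^*$. Hence the decomposition has width at most $(g_H+\varepsilon)k^* \leq (g_H+\varepsilon)\bw(G)$, which is the desired bound.

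The main obstacle is the sharp accounting of the leading constant in \cref{main_alg}: verifying that the branch-decomposition assembled from the Gorsky-Seweryn-Wiederrecht variant of the GMST has leading coefficient exactly $g_H$ (not just $\Ocal(g_H)$) in front of the grid size. This hinges on tight surface-specific branchwidth bounds for the almost-embeddable pieces of the structural decomposition---echoing the Seymour-Thomas ratcatcher phenomenon on surfaces of Euler genus $g$---together with the clean additivity of branchwidth under the low-order separators along which those pieces are glued. Both properties hold in a sharper form for branchwidth than for treewidth, which is precisely the reason this result applies only to the former parameter.
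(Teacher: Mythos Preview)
Your proposal is correct and follows essentially the same strategy as the paper's proof (\Cref{thm_eptas}): exploit the fact that the approximation bound from the core algorithm has the form $g_H\cdot k + c$ with leading coefficient \emph{exactly} $g_H$ and additive term $c=\poly(t)$, then split on whether $c$ is small relative to $\varepsilon b$; if so the approximation is already good enough, and if not $\bw(G)$ is bounded by $\poly(t)/\varepsilon$ and one falls back to the exact \FPT{} branchwidth algorithm (\Cref{prop_fptbw}).

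The only cosmetic difference is the order of operations. The paper first runs the full approximation (\Cref{cor:bwapprox}) to obtain $b$ with $b\le\bw(G)\le g_H b + c$ and then compares $c/b$ to $\varepsilon$, whereas you test once at a precomputed threshold $\tau$ and only afterwards binary-search in the large regime, using the returned grid minor as an explicit lower-bound certificate. Both routes are equivalent. Note also that the sharp leading coefficient you flag as the ``main obstacle'' is not something you need to re-derive: it is already explicit in the paper's function $\mathbf{b}_{\ref{thm_approxcore}}(t,k,g) = (g+1)k + \poly(t)$, evaluated at $g=\eg(H)-1$, which gives precisely $g_H\cdot k + \poly(t)$.
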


Again, using the randomized algorithm of Chekuri and Chuzoy~\cite{ChekuriC2016Polynomial}, the above approximation can also be turned into a randomized one running in time $2^{\mathbf{poly}(\nicefrac{1}{\varepsilon})}\mathbf{poly}(t+n)$.

\paragraph{Future directions.}
In this paper we prove that $\tw$, $\bw$, and $\bg$ can all\footnote{Certainly, the same approximation applies for every graph parameter that is linearly equivalent to treewidth (see e.g., \cite{DvorakN19Treewidth,LardasPTZ23OnStrict,MescoffPT23Themixed,HarveyW2017Parameters}).} be approximated in polynomial time on $H$-minor free graphs with an approximation factor that depends on the Euler genus of the excluded graph $H$ rather than the size of $H$.
The natural question arising in the above context is the following: 

\begin{quote}
\textsl{Is the parameter of Euler genus of $H$ the optimal measure for the approximation factor?}
\end{quote}

We suspect that the answer to this question is negative.
However, we certainly believe that Euler genus is the ``most natural'' measure that we may imagine for this.

\paragraph{Related work.}
The linear dependence between $\tw$ and $\bg$, as denoted by~\eqref{such_no}, was revealed for the first time for minor-closed graph classes in \cite{DemaineH2008Linearity}, while for graphs of bounded genus this was already known in \cite{DemaineFHT2005Subexponential}.
These results germinated the first ideas on the approximability of treewidth/branchwidth: they can be traced back to the proofs of \cite{DemaineFHT2005Subexponential} for the case of bounded genus graphs, in \cite{DemaineHNRT04Approximation} for single-crossing minor-free graphs (using the structural theorem of \cite{RobertsonS91Excluding}), and to the proofs of \cite{DemaineH2008Linearity} and \cite{DornFT08Catalan} for $H$-minor free graphs (see also \cite{FeigeHL08impro}).
Refined structural theorems were also recently used in \cite{ThilikosW2025Approximating} for the derivation of an EPTAS for branchwidth for graph classes excluding a toroidal and a projective graph.
In our proofs, we use some of the tools presented in~\cite{ThilikosW2025Approximating}.

\subsection{Sketch of our proof}
For the proof of \zcref{thm_amaingrid}, which implies \eqref{ral_iop}, we work with branchwidth instead of treewidth, as branchwidth behaves better with embedded graphs.
We make strong use of the structure of $H$-minor-free graphs as revealed in \cite{RobertsonS2003Grapha} by the Graph Minor Structure Theorem (GMST).
This theorem indicates that $H$-minor-free graphs can be tree-decomposed into a collection of graphs, called \textsl{torsos}, that can be ``almost embedded'' in a surface where $H$ does not embed.
By ``almost embedded'' we mean that a considerable part of each torso $G_{t}$ is embedded ``up to 3-separations'' in the surface while the rest of it is either a small number of ``apices'' or can be drawn inside a bounded number of faces as ``vortices of small width''.
(We postpone the precise definitions to \zcref{sec_gmst}.)

For our purposes, we prove a special variant of the GMST that permits both the values of $\bw$ and $\bg$ on $G$ to be computed using the corresponding values of its torsos.
As already mentioned, each torso $G_{t}$ is partitioned into two parts: the part that is embedded ``up to 3-separations'', say $G_{t}'$, and the rest that corresponds either to the apices or the parts that are drawn inside the vortices.
Since the tools we want to apply later need an actual embedding, we use the quasi-4-connected decomposition of Grohe in \cite{Grohe2016Quasi4Connected} to get rid of the vague parts of $G_t'$ that develops at the site of the 3-separations.
The proof of our version of GMST uses as a departure point the recent version of GMST proven in \cite{GorskySW2025Polynomial} that ensures that both the number of apices or vortices, as well as the width of the vortices are all bounded by a polynomial function of the size of $H$.
Our proof is algorithmic, running in time $2^{\poly(t)}\cdot \poly(n)$, based on the running time of the GMST in \cite{GorskySW2025Polynomial}.

We next derive a constant-factor approximation algorithm for the branchwidth of each torso $G_{t}$ that either outputs a $(k\times k)$-grid minor of $G_{t}$, certifying that $\bw(G_{t}) \geq k$, or builds a branch-decomposition of width at most $\Ocal(gk + \poly(t))$, where $g$ is the Euler genus of the surface where $G_{t}'$ is embedded that, in turn, is bounded by the Euler genus of $H$. 
For this, we work with the graph $G_{t}'$ by removing from it additional apex vertices obtained by a surface-cutting approach along shortest non-contractible curves, using the algorithms of \cite{CabelloCL2012Algorithms}.
This procedure reduces $G_{t}'$ to a simpler graph $G_{t}''$ that is either embedded into a surface different than the sphere in a way that certifies the existence of a $(k\times k)$-grid using the results of \cite{DemaineFHT2005Subexponential} or to a graph embedded into a sphere.
In the latter case, we apply the Ratcatcher algorithm from \cite{SeymourT1994Call} and find {either a certificate of a $(k\times k)$-grid} or a special branch decomposition, called \textsl{sphere-cut decomposition} (see~\cite{RueST14Dynamic,SauT10Subexponential,DornFT08Catalan,DornPBF10Efficient}).
Using this decomposition we return the apices and vortices that are missing from the spherical embedding of $G_t''$ to $G_t$ and we do this with an \textsl{additive} cost on the width of the obtained branch-decomposition of $G_{t}$, where the part contributed by the apices and vortices can be guaranteed to be comparatively small.
The fact that this is true is a core result of \cite{ThilikosW2025Approximating}, though their proof is not constructive.
To be able to construct a branch-decomposition explicitly, we thus develop a constructive proof of this result along the way (see \zcref{thm_spherewithoutvorticeshighbw}).
This leads to the construction of a branch-decomposition of each torso $G_{t}$ of width in $\Ocal(t^2 k + \poly(t))$ in $2^{\poly(t)}\cdot \poly(n)$ time.

As the version of the GMST that we prove roughly permits the computation of both $\bw(G)$ and $\bg(G)$ from the corresponding values of its torsos, we derive that the same linear relation between branchwidth and biggest grid number holds also for $G$.
Once more we note that by using randomized algorithms due to Chekuri and Chuzoy~\cite{ChekuriC2016Polynomial}, we can push our results to be truly polynomial, at the cost of only receiving our desired results with high probability.
Notably, this change only impacts the proof of our variant of the GMST and leaves the rest of our methods untouched.

\section{Preliminaries}\label{sec_prelims}
We generally adhere to the notation for graphs defined in \cite{Diestel2010Graph}.
All graphs we consider are simple, meaning that they contain neither loops nor parallel edges, and contain at least one vertex.
We call a maximal induced subgraph of a graph without cut vertices a \emph{block}.
\paragraph{Surfaces.}
A \emph{surface} $\Sigma$ is a compact 2-dimensional manifold with or without boundaries.
The surfaces we consider are generally assumed to be connected.
Given a pair $(\mathsf{h},\mathsf{c}) \in \mathbb{N} \times [0,2]$, we let $\Sigma^{\mathsf{(\mathsf{h},\mathsf{c})}}$ be the surface without boundary created from the sphere by adding $\mathsf{h}$ handles and $\mathsf{c}$ crosscaps.
Dyck's theorem \cite{Dyck1888Beitraege,FrancisW1999Conways} tells us two crosscaps are equivalent to a handle in the presence of a third crosscap and thus our notation is sufficient to capture all surfaces without boundary.

Given a surface $\Sigma$, we add an \emph{open}, respectively \emph{closed hole} to $\Sigma$ by removing a closed, respectively open disk from $\Sigma$.
If $\Sigma$ is a surface (with holes), we let $\overline{\Sigma}$ be the surface resulting from gluing a closed, respectively open disk onto each open, respectively closed hole of $\Sigma$.
We let the \emph{genus} of $\Sigma$ be $2\mathsf{h} + \mathsf{c}$, where $\Sigma^{\mathsf{(\mathsf{h},\mathsf{c})}}$ is the surface to which $\overline{\Sigma}$ is isomorphic.
Given a surface $\Sigma$, we denote by $\mathsf{bd}(\Sigma)$ the \emph{boundary} of said surface.

A curve $C$ in a surface $\Sigma$ is called \emph{contractible} if it is simple, closed, and is continuously deformable to a point.
We call a curve \emph{non-contractible} if it is simple, closed, and not contractible.
Note that, if $\Sigma$ is not a sphere (with or without holes) and we let $\Sigma_1,\Sigma_2$ be the two components of $\Sigma - C$, for a non-contractible curve $C$,
then the minimum of the genus of $\Sigma_1$ and $\Sigma_2$ is less than the genus of $\Sigma$.

\paragraph{Embeddings.}
Given a graph $G$, we say that $G$ has an \emph{embedding} $\psi$ into a surface $\Sigma$, if $\psi$ is a function with domain $V(G) \cup E(G)$ such that
\begin{enumerate}
    \item $\psi(v)$ is a point of $\Sigma$ for each $v \in V(G)$,
    \item $\psi(u) \neq \psi(v)$ for all distinct $u,v \in V(G)$,
    \item $\psi(uv)$ is a simple curve in $\Sigma$ with the ends $\psi(u),\psi(v)$ for all $uv \in E(G)$,
    \item $\psi(uv) \cap \psi(wx) \subseteq \{ \psi(u), \psi(v) \}$, for all $uv, wx \in E(G)$, and
    \item $\psi(uv) \cap \psi(w) = \emptyset$ for all $uv \in E(G)$ and $w \in V(G) \setminus \{ u,v \}$.
\end{enumerate}
We let $\psi(G)$ be the union of all points and curves in its image and call the connected components of $\Sigma \setminus \psi(G)$ the \emph{faces} of $\psi$.
If every face of $\psi$ is homeomorphic to an open disk, we say that $\psi$ is a \emph{2-cell-embedding}.
We will generally work with graphs with a fixed embedding and thus identify the elements of $G$ with the points and curves in $\Sigma$ which $\psi$ maps to each other.
The minimum number of vertices of $G$ that is intersected by any non-contractible curve in $\Sigma$ is called the \emph{representativity} (or \emph{face-width}) of $\psi$.
We consider the representativity of an embedding in the sphere to be infinite.

Of course, given some embedding it may not be immediately clear what its representativity is.
For this purpose, Cabello, Colin de Verdi\`ere, and Lazarus provide us with a linear time algorithm that checks whether a given 2-cell-embedding has representativity at most $k$ \cite{CabelloCL2012Algorithms}.
They also prove the following result, which we will make frequent use of.

\begin{proposition}[Cabello, Colin de Verdière, and Lazarus \cite{CabelloCL2012Algorithms}]\label{prop_findnoncontrcurve}
    There exists an algorithm that, given a graph $G$ with a 2-cell-embedding of representativity $k$ in a surface $\Sigma$ of genus $g$ with $h$ holes, finds a non-contractible curve in $\Sigma$ intersecting the minimum number of vertices of $G$ in $\Ocal((g+h)\cdot k\cdot (n+m))$-time.
\end{proposition}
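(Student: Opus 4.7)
The plan is to reduce the geometric minimum to a combinatorial shortest-cycle problem on an auxiliary graph, and then solve it via a homology-sensitive breadth-first search.

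First, I would translate the problem onto the \emph{radial graph} $R(\psi)$, whose vertex set is $V(G) \cup F(\psi)$ and whose edges record vertex--face incidences of $\psi$. After a standard transversality/perturbation argument, every non-contractible simple closed curve in $\Sigma$ can be homotoped so that it meets $G$ only at vertices, at which point it corresponds to a closed walk in $R(\psi)$ whose length (on the $V(G)$-side of the bipartition) equals the number of vertices it intersects. Hence the quantity we want to compute is exactly the minimum length of a non-contractible closed walk in $R(\psi)$, and by assumption this minimum is $k$.

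Second, I would set up a homology oracle using a tree--cotree decomposition of $(G,\psi)$. Fix a spanning tree $T$ of $G$ and, in the dual of $\psi$, a spanning tree $T^{*}$ of the edges outside $T$; the remaining $2g+h-1$ edges, which I will call \emph{signature edges}, generate $H_{1}(\Sigma;\mathbb{Z}_{2})$. Assign to each edge $e$ of $R(\psi)$ an $O(g+h)$-bit label recording the homology class of the fundamental cycle closed by the projection of $e$ onto $G$; this costs $\Ocal((g+h)(n+m))$ time to build and allows the homology class of any closed walk to be read off as the XOR of its edge labels. For each of the $\Ocal(g+h)$ signature edges $e = uv$, I then run a breadth-first search in $R(\psi) - e$ from $u$ truncated at depth $k$, closing the shortest $u$--$v$ path with $e$ to obtain a cycle whose $e$-coordinate is $1$, hence non-separating and in particular non-contractible. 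Each BFS costs $\Ocal(k(n+m))$, so producing these $\Ocal(g+h)$ candidates and taking the shortest one costs $\Ocal((g+h)\cdot k\cdot (n+m))$, matching the target runtime.

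The main obstacle is that the minimum-intersection curve may be a non-contractible \emph{separating} curve: such a curve is null-homologous and therefore invisible to the signature-edge scheme above. The standard remedy, which I would follow, is a cut-and-recurse: if no sufficiently short non-separating cycle is found, cut $\Sigma$ along the shortest non-separating cycle produced so far and recurse on the resulting surface(s), whose value of $g+h$ is strictly smaller. Since any non-contractible separating curve eventually becomes non-separating after enough handles/crosscaps have been removed, correctness is preserved; since the recursion depth is $\Ocal(g+h)$ and each level operates on a subgraph of size $\Ocal(n+m)$, the costs telescope within $\Ocal((g+h)\cdot k\cdot (n+m))$. The delicate point to get right is that the cutting operation must be implemented combinatorially on $R(\psi)$ (duplicating vertices and edges along the cutting walk) without blowing up the size of the auxiliary graph beyond $\Ocal(n+m)$ after all recursive calls, which is the technical heart of the algorithm.
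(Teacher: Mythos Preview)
The paper does not prove this proposition: it is quoted verbatim as a result of Cabello, Colin de Verdi\`ere, and Lazarus \cite{CabelloCL2012Algorithms} and is used as a black box throughout (e.g., in the proof of \Cref{thm_boundedgenusrats}). There is therefore no ``paper's own proof'' to compare your attempt against.

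For what it is worth, your sketch is in the right spirit for how such results are obtained in the cited literature (radial/cross-metric representation, tree--cotree homology basis, one BFS per generator, then handle the separating case), but two points would need tightening if you actually wanted to prove it. First, you cannot ``truncate the BFS at depth $k$'': $k$ is the output, not an input, so the $k$ in the running time arises because the shortest non-contractible cycle has length $\Theta(k)$ and the BFS naturally stops there, not because you know $k$ in advance. Second, your cut-and-recurse for the separating case does not obviously telescope to $\Ocal((g+h)\cdot k\cdot(n+m))$: a naive accounting gives an extra $(g+h)$ factor, and getting the stated bound requires the more careful amortisation carried out in \cite{CabelloCL2012Algorithms}. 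None of this matters for the present paper, which simply invokes the result.
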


The \emph{Euler genus} $\mathsf{eg}(G)$ of $G$ is the minimum genus of a surface $\Sigma$ such that $G$ has an embedding into $\Sigma$.
Determining the Euler genus of a given graph is an NP-complete problem \cite{Thomassen1989Graph,Thomassen1993Triangulating}.
However, it is possible to efficiently determine whether a graph can be embedded in a surface of some fixed genus \cite{Mohar1999Linear,KawarabayashiMR2008Simpler}.
Notably, the embedding produced by the algorithm in \cite{Mohar1999Linear} is a 2-cell embedding as long as such an embedding can exist in the surface we ask the graph to be embedded in.
As a consequence we are able to use the following.

\begin{proposition}[Kawarabayashi, Mohar and Reed \cite{KawarabayashiMR2008Simpler}]\label{prop_findembedding}
    There exists an algorithm that, given a graph $G$ of Euler genus $g$, returns a 2-cell embedding of $G$ in a surface of genus at most $g$ in $2^{\mathsf{poly}(g)} n$-time.
\end{proposition}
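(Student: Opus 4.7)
The plan is to combine a parametrized genus-embedding routine with a search over the target genus and a short topological cleanup step. First, I would invoke a fixed-parameter algorithm that, given $G$ and an integer $g'$, decides in $f(g')\cdot n$ time whether $G$ admits a 2-cell embedding into some surface of genus at most $g'$ and, if so, produces one; Mohar's algorithm \cite{Mohar1999Linear} already delivers this with a (tower-type) function $f$. Iterating this subroutine for $g' = 0, 1, \dots, g$ and returning the embedding for the smallest feasible $g'$ gives an embedding into a surface whose genus matches the Euler genus of $G$, at the cost of an additional $O(g)$ factor in the running time.

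Second, I would ensure the 2-cell property by a minimality argument. Suppose the embedding produced for the smallest feasible $g'$ has a face $F$ that is not homeomorphic to an open disk; then $F$ either contains a handle, a crosscap, or has more than one boundary component. In each case we can pick a non-contractible simple closed curve inside $F$ (disjoint from $\psi(G)$), cut $\Sigma$ along it, and cap off each newly created boundary component with a disk. This produces a new embedding of $G$ into a surface of strictly smaller genus, contradicting the minimality of $g'$. Hence every face is a disk and the returned embedding is automatically a 2-cell embedding.

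Third, the real content lies in pushing $f(g)$ down to $2^{\mathsf{poly}(g)}$. To achieve this I would follow the strategy of Kawarabayashi, Mohar, and Reed \cite{KawarabayashiMR2008Simpler}: either find a large grid minor in $G$, in which case one can locate an \emph{irrelevant vertex} whose deletion preserves the minimum genus (since a large enough grid contains a contractible region that cannot interfere with any topological obstruction), and recurse on the smaller graph; or else $G$ has treewidth bounded by a polynomial in $g$, at which point the embedding problem can be solved by dynamic programming over a tree-decomposition in $2^{\mathsf{poly}(g)}\cdot n$ time. Combining the two regimes via the polynomial Grid Theorem \cite{ChuzhoyT2021Tighter} yields the desired single-exponential dependence in $g$.

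The main obstacle is the irrelevant-vertex step: one must argue that within a sufficiently nested system of concentric cycles (extracted from a grid minor), the innermost vertex is always irrelevant for the genus, which requires delicate surface-cutting arguments to rule out the case that every minimum-genus embedding uses that vertex on a non-contractible curve. Once this step is in place, the recursion depth is $O(n)$ and each level costs $2^{\mathsf{poly}(g)}$ time after a linear-time grid-minor/treewidth dichotomy, yielding the claimed $2^{\mathsf{poly}(g)}\cdot n$ bound.
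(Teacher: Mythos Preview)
The paper does not prove this proposition at all: it is stated as a black-box citation of Kawarabayashi, Mohar, and Reed \cite{KawarabayashiMR2008Simpler}, with no accompanying argument. The only commentary the paper adds is the remark immediately following the statement that ``the best known algorithm featur[es] a function $f$ that is single exponential in its input.'' So there is no ``paper's own proof'' to compare your proposal against; you have written a sketch for a result the authors simply import.

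As a side note on your sketch itself: the overall shape (irrelevant-vertex reduction inside a large grid, dynamic programming on bounded treewidth otherwise) is indeed the philosophy behind \cite{KawarabayashiMR2008Simpler}, but your invocation of the polynomial Grid Theorem \cite{ChuzhoyT2021Tighter} is both anachronistic (that result postdates \cite{KawarabayashiMR2008Simpler} by over a decade) and not quite what is needed. The dichotomy you want is algorithmic---you must \emph{find} either the grid or a bounded-width tree-decomposition in the stated time---and the general polynomial grid bound alone does not hand you that. The actual argument in \cite{KawarabayashiMR2008Simpler} is more delicate and does not route through the general grid theorem in the way you suggest. Your steps 1 and 2 (iterating Mohar's algorithm and the minimality-based 2-cell cleanup) are reasonable warm-up observations but are subsumed by what the cited algorithm already guarantees.
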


The runtime of this algorithm is not polynomial in all of its parameters.
As a consequence, we sometimes split out our proofs to demonstrate that once a 2-cell embedding is given, our computations run in polynomial time.
See \cite{MoharT2001Graphs} for more details on surfaces and embeddings.

\paragraph{Cuts.}
Let $G$ be a graph and let $S \subseteq E(G)$.
If $S \subseteq E(G)$, the set $\partial_G(S)$ is the \emph{cut around $S$ (in $G$)}, which contains all vertices in $V(G)$ incident to edges in both $S$ and $E(G) \setminus S$.
We drop the index $G$ from this set if it is clear from the context which graph is meant.

\paragraph{Branchwidth.}
We let $L(T)$ be the set of leaves of the tree $T$.
The non-leaf vertices of a tree are called \emph{internal vertices}.
A tree is called \emph{ternary} if all of its internal vertices have degree 3.
Accordingly, a ternary tree with two leaves corresponds to a single edge and a ternary tree with a single leaf is simply an isolated vertex.

Given a ternary tree $T$ with a bijection $\tau$ from $E$ to $L(T)$, note that each edge $e \in E(T)$ induces a partitioning of $E$ into two non-empty sets $X_e,Y_e$ via $\tau$, where we note that $\partial(X_e) = \partial(Y_e)$.
If $E = E(G)$, then the \emph{branchwidth of $(T,\tau)$} is the maximum over all edges $e$ in $E(T)$ of $|\partial(X_e)|$ and the \emph{branchwidth $\bw(G)$ of $G$} is the minimum of the branchwidth over all ternary trees $T$ and bijections $\tau$ as above.
We call the pair $(T,\tau)$ a \emph{branch-decomposition} of $G$.

The structure of graphs with very small branchwidth is well known.
In particular, we will generally not be interested in graphs with branchwidth at most 1, as these are union of stars \cite{RobertsonS1991Graph} and can thus easily be recognised.
The following simple result allows us to reduce the problem of computing the branchwidth of a graph (and a corresponding branch-decomposition) to computing the branchwidth of its blocks.
We give an explicit proof of this result, since the explicit construction of the branch-decomposition is somewhat cumbersome and we shall reencounter ideas of the particular construction we use later on.

\begin{lemma}\label{lem_branchwidthofblocks}
    For any graph $G$ with $\bw(G) \geq 2$, the branchwidth of $G$ is the maximum of the branchwidth of its blocks.
    In particular, a branch-decomposition of $G$ with width equal to the branchwidth of $G$ can be constructed from branch-decompositions of its blocks if they each have width at most the branchwidth of $G$ in time $\Ocal(m)$.
\end{lemma}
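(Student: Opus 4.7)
The plan is to prove the two inequalities $\bw(G)\geq\max_{B}\bw(B)$ and $\bw(G)\leq\max_{B}\bw(B)$ separately, and then note that the construction for the upper bound is manifestly linear time. For the lower bound, observe that every block $B$ is a subgraph of $G$, and branchwidth is monotone under taking subgraphs: given any branch-decomposition $(T,\tau)$ of $G$, we delete the leaves corresponding to edges in $E(G)\setminus E(B)$ and smooth the resulting degree-2 nodes. Since removing edges cannot enlarge the symmetric difference of adjacent vertex-sets, the cut sizes do not increase, so the resulting branch-decomposition of $B$ has width at most $\bw(G)$. Hence $\bw(B)\leq\bw(G)$ for every block $B$ and therefore $\max_{B}\bw(B)\leq\bw(G)$.

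For the upper bound I induct on the number of blocks using the block-cut tree. The base case (one block) is immediate. For the inductive step, pick a leaf block $B$ of the block-cut tree, attached to the remainder $G'$ at the single cut vertex $v$; so $E(B)\cap E(G')=\emptyset$, $V(B)\cap V(G')=\{v\}$, and $G'$ has one fewer block than $G$. By induction, $G'$ admits a branch-decomposition $(T',\tau')$ of width at most $k\coloneqq\max_B\bw(B)$, and by hypothesis $(T_B,\tau_B)$ has width at most $k$. Construct $(T,\tau)$ by picking a leaf $\ell_B\in L(T_B)$ whose $\tau_B$-image is incident to $v$ in $B$ (such a leaf exists because $v\in V(B)$), picking an analogous leaf $\ell'\in L(T')$, subdividing the leaf-edges $\ell_B n_B$ and $\ell' n'$ with new nodes $m_B$ and $m'$, and adding a connector edge $m_B m'$. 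The resulting tree is ternary and its leaves are in bijection with $E(G)=E(B)\sqcup E(G')$.

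The width analysis rests on three points. Firstly, the connector $m_Bm'$ separates $E(B)$ from $E(G')$, and since $V(B)\cap V(G')=\{v\}$ this cut is exactly $\{v\}$, of size $1$. Secondly, each of the four subdivision edges corresponds to a cut contained in the vertex set of a single edge of $G$, and therefore has size at most $2$. Thirdly, for any other edge $e$ of $T_B$, the combined cut on the $m_B$-side now also contains all of $E(G')$; the key observation is that the $m_B$-side always contains the leaf $\ell_B$, whose edge is incident to $v$. Hence if $v$ is incident to some edge on the opposite side, then $v$ was already incident to both sides of the original $T_B$-cut, i.e.\ already lay in the original cut. Consequently the combined cut coincides with the original $T_B$-cut and so has size at most $k$; the symmetric argument applies to edges of $T'$. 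Thus $(T,\tau)$ has width $\max(k,2)=k$ whenever $k\geq 2$, and the edge cases $k\leq 1$ (for which $G$ is a disjoint union of stars and isolated vertices) are handled directly by inspection.

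The construction is clearly computable in $\Ocal(n)$ time: the block-cut tree is found in linear time, and each gluing step adds only a constant number of nodes and edges to the evolving tree, so the total work is proportional to the size of the input branch-decompositions. The main obstacle is the attachment-leaf choice in the gluing: a naive attachment at an arbitrary leaf can increase the cut at $v$ by one, since $v$ might end up incident to both sides of a new cut without having been in the original one. Restricting attachment to a leaf whose edge is incident to the cut vertex circumvents this pitfall and is the technical heart of the proof.
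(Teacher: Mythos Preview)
The paper does not actually prove this lemma; it introduces it with the phrase ``The following is an easy exercise'' and gives no argument. Your proposal supplies a correct proof along the expected lines: monotonicity under subgraphs for the lower bound, and an inductive gluing along the block--cut tree for the upper bound, with the careful choice of attachment leaf (one whose edge is incident to the cut vertex $v$) being exactly the point that prevents the width from creeping up by one. The width analysis you give is sound.

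Two small edge cases are glossed over but harmless. First, if a leaf block $B$ is a bridge, then $T_B$ is a single vertex and there is no leaf-edge to subdivide; one handles this by attaching $\ell_B$ directly to $m'$ (so $m'$ gets its third neighbour). The paper itself glosses over the identical issue elsewhere (see the footnote in the proof of \Cref{lem_branchdecompositionviacutset}). Second, for disconnected $G$ one processes each component and then joins the resulting trees by connector edges whose cuts are empty. Finally, your remark that the running time is ``proportional to the size of the input branch-decompositions'' is the right statement; this is $\Ocal(m)$ in general and becomes $\Ocal(n)$ in the sparse-graph settings where the paper invokes the lemma.
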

\begin{proof}
    We first want to reduce our problem to the 2-edge-connected components of $G$, which can themselves be comprised of several blocks.
    First, we observe that if we had branch-decompositions of all components $C_1, \ldots , C_\ell$ of $G$, we can take an arbitrary ternary tree $T^c$ with $\ell$ leaves and construct a branch-decomposition of $G$ by subdividing an edge of each of the branch-decompositions of the components and identifying the subdivision vertex with one of the leaves of $T^c$.
    (We assume here that $\ell \geq 2$, since the case $\ell = 1$ is trivial.)
    One can now get a bijection between the leaves of the constructed tree and $E(G)$ by combining the bijections associated with the branch-decompositions of our components in a natural way.
    It is then easy to confirm that all edges corresponding to edges in $T^c$ have width 0 and all other edges retain their appropriate width from the existing branch-decompositions.

    Suppose now that $G$ is connected and contains a bridge $e = uv$, meaning that $G - e$ is no longer connected.
    Let $G_u$ be the component of $G - e$ that contains $u$ and let $G_v$ be defined analogously.
    (We assume that neither of these components is trivial.
    Otherwise the desired result is again trivially achieved.)
    Further assume that $G_u$ has a branch-decomposition $(T_u,\tau_u)$ with width at most the branchwidth of $G$ and let $(T_v,\tau_v)$ be a branch-decomposition of $G_v$ also with width at most the branchwidth of $G$.
    Then we construct a branch-decomposition of $G$ with width at most $\bw(G)$ as follows.
    Let $T^\star$ be the unique ternary tree with three leaves $l_1,l_2,l_3$ and the root $r$.
    Subdivide an edge of $T_u$ and identify the subdivision vertex with $l_1$, and subdivide an edge of $T_v$ and identify the subdivision vertex with $l_3$.
    Let $T^b$ be the resulting tree and let $\tau^b$ be the result of combining $\tau_u$ and $\tau_v$, as well as adding $\tau^b(l_2) = e$.
    Thus $\tau^b$ is a bijection between $E(G)$ and the leaves of $T^b$.
    Clearly the $rl_2$ in $T^b$ has width 2, justifying the requirement in the statement that we must have $\bw(G) \geq 2$.
    For $rl_1$ the only vertex which can contribute to the width here is $u$ and analogous observation holds for $rl_3$ and $v$.
    All remaining edges correspond to edges in $T_u$ or $T_v$ and retain their width from the previous branch-decompositions.

    Thus, using the constructions just presented, both of which can clearly be carried out in linear time, we can reduce the proof of the statement to the case in which $G$ is 2-edge-connected, meaning that it has no bridges and thus all of its blocks consist of more than just an edge.

    Let $B$ be a block of $G$ and let $B_1, \ldots , B_k$ be 2-edge-connected subgraphs of $G$ such that $B \cup \bigcup_{i=1}^k B_i = G$, we have $V(B_i) \cap V(B_j) = \emptyset$ for all distinct $i,j \in [k]$, and $|V(B) \cap V(B_i)| = 1$ for all $i \in [k]$, with $v_i \in V(B) \cap V(B_i)$ being a cut vertex in $G$.
    Further, suppose that $(T_B,\tau_B)$ is a branch-decomposition of $B$ and $(T_1,\tau_1), \ldots , (T_k,\tau_k)$ are branch-decompositions of $B_1, \ldots , B_k$, where each branch-decomposition has width at most $\bw(G)$.
    Since $B$ is a block, for all $i \in [k]$, we may let $e_i \in E(B_i)$ be an edge which has $v_i$ as an endpoint.
    Note that it may be true that $e_i = e_j$ for some distinct $i,j \in [k]$.

    Let $f_i \in E(T_B)$ be the unique edge incident to $\tau_B(e_i)$ in $T_B$.
    For each $i \in [k]$, we subdivide $f_i$, letting $x_i$ be the corresponding subdivision vertex.
    (This may cause $f_i$ to be subdivided more than once, which is of no concern for our later arguments.)
    Then we subdivide an arbitrary edge in $T_i$ and identify the subdivision vertex with $x_i$.
    This ultimately results in a tree $T$ with $|E(G)|$ leaves, allowing us to construct a bijection $\tau$ between $E(G)$ and $L(T)$ by simply combining $\tau_1, \ldots , \tau_k$.
    We claim that $(T,\tau)$ is a branch-decomposition of $G$ of width at most $\bw(G)$.

    In $T$ each $x_i$ is incident to three edges:
    $e_i^1$ being the edge closest to the non-leaf endpoint of $f_i$, $e_i^2$ being the edge incident to the subdivision vertex produced in $T_i$, and $e_i^3$ being the edge closest to $\tau_B(e_i)$.
    Since $v_i$ is a cut vertex in $G$, the only vertices that can contribute to width of $e_i^1$ are the endpoints of $e_i$ and thus the width of $e_i^1$ in $(T,\tau)$ is at most 2.
    For the same reason, the width of the edge $e_i^2$ can at most be 1, as $v$ is the only vertex that can appear in the cut around $E(B_i)$.
    Finally, if $e_i^3$ has $\tau_B(e_i)$ as an endpoint, its width can clearly at most be 2.
    Otherwise, it is part of a subdivided path corresponding to $f_i$ and the fact that its width is at most 2 can be derived by induction from our observation on the width of $e_i^1$.
    Any edge corresponding to an edge in $T_i$ for any $i \in [k]$ retains its width in $(T,\tau)$.
    All of the remaining edges must correspond to edges in $T_B$ and from our observations above, and in particular again the fact that $v_1, \ldots , v_k$ are cut vertices, we can deduce that they also retain their widths from $(T_B,\tau_B)$.
    Thus $(T,\tau)$ is a branch-decomposition of $G$ of width at most $\bw(G)$.
    Once more it is clear that our construction only takes linear time to carry out.

    To construct our branch-decomposition of $G$ from the branch-decompositions of its blocks, we may thus proceed as follows:
    First, combine the decompositions for all non-trivial blocks via the procedure laid out for 2-edge-connected graphs above.
    Then combine these decompositions along bridges.
    Finally, combine the decompositions of the components of $G$.
\end{proof}

More generally, it is know that there is an FPT-algorithm for the problem of computing the branchwidth of a graph.

\begin{proposition}[Bodlaender and Thilikos \cite{BodlaenderT1997Constructive}]\label{prop_fptbw}
    There exists an algorithm that, given a graph $G$, outputs $\bw(G)$ in time $2^{\Ocal((\bw(G))^3)}\cdot n$.
\end{proposition}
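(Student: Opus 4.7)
The plan is to exploit the tight structural equivalence between branchwidth and treewidth: since $\bw(G) - 1 \leq \tw(G) \leq \lfloor \nicefrac{3}{2}\bw(G)\rfloor - 1$ for every non-acyclic graph, a graph with $\bw(G) \leq k$ satisfies $\tw(G) = \Ocal(k)$. I would therefore iterate over guesses $k = 1, 2, 3, \dots$ and for each $k$ run the two-phase procedure described below, returning the smallest value for which it succeeds; by \cref{lem_branchwidthofblocks} it suffices to do this on each block of $G$.

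In the first phase, I would invoke Bodlaender's constructive linear-time algorithm for treewidth \cite{BodlaenderK96Effic} to either produce a tree-decomposition $(T,\beta)$ of $G$ of width at most $\lfloor \nicefrac{3}{2}k\rfloor - 1 = \Ocal(k)$, or to certify that $\tw(G)$ exceeds this bound, in which case $\bw(G) > k$ and we move to the next guess. This phase already costs $2^{\Ocal(k^3)}\cdot n$ time.

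In the second phase, I would run a bottom-up dynamic program on $(T,\beta)$ in the spirit of Bodlaender--Kloks to decide whether $\bw(G) \leq k$. For each subtree $T_t$, the DP table records all ``realizable interfaces'' of partial branch-decompositions of the subgraph formed by the edges covered in $T_t$. The interface at $t$ is encoded by a partition of the $\Ocal(k^2)$ edges incident to $\beta(t)$, specifying which subsets currently inhabit the same leaf-cluster of the tentative ternary tree, together with the largest cut size $|\partial(X_e)|$ encountered along the decomposition so far. Two partial decompositions sharing the same interface can be merged without loss, which is what keeps the table finite.

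The main obstacle is showing that the table size at each bag is bounded by $2^{\Ocal(k^3)}$. The number of partitions of an $\Ocal(k^2)$-element edge set is at most the Bell number $B_{\Ocal(k^2)} \leq 2^{\Ocal(k^2\log k)} \subseteq 2^{\Ocal(k^3)}$, and the width slot contributes only $\Ocal(\log k)$ additional bits. Standard join/forget transitions then process each bag in time polynomial in the table size, and since $|V(T)| = \Ocal(n)$ the whole DP runs in $2^{\Ocal(k^3)}\cdot n$ time. Combining the two phases and iterating on $k$ yields the claimed overall running time of $2^{\Ocal(\bw(G)^3)}\cdot n$; the branch-decomposition witnessing the optimal $k$ can be extracted from the DP table in the usual way with no asymptotic overhead.
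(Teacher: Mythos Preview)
The paper does not give its own proof of this proposition; it is stated as a citation to \cite{BodlaenderT1997Constructive} and used as a black box (only in the proof of \Cref{thm_eptas}). So there is nothing in the paper to compare against directly.

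Your sketch is in fact a faithful high-level summary of the approach taken in the cited paper of Bodlaender and Thilikos: first obtain a tree-decomposition of width $\Ocal(k)$ via Bodlaender's linear-time treewidth algorithm, then run a Bodlaender--Kloks style dynamic program over that decomposition to decide $\bw(G)\leq k$. The iteration over $k$ and the cost accounting are fine. The one place where your description is too loose to stand as a proof is the characteristic: recording merely a partition of the $\Ocal(k^2)$ boundary edges together with the maximum cut seen so far is not enough information to merge correctly at join bags; the actual paper tracks a richer ``characteristic'' of a partial branch-decomposition (essentially a labelled tree encoding how the open boundary edges hang off the partially built ternary tree and the middle-set sizes along it). Their combinatorial bound on the number of such characteristics is what yields the $2^{\Ocal(k^3)}$ table size. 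Your Bell-number estimate lands in the right asymptotic class, but if you were to write this out in full you would need to specify the stronger invariant and verify that it is preserved under introduce/forget/join transitions.
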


We call a graph \emph{planar} if it has an embedding in the sphere.
A core result for our purposes is the fact that it is possible to decide in polynomial time whether the branchwidth of planar graphs is at most some specified integer $k$.

\begin{proposition}[Seymour and Thomas \cite{SeymourT1994Call}]\label{prop_catchrat}
    Given a planar graph $G$ and a positive integer $k$, there exists an algorithm that decides in time $\Ocal(n^2)$ whether $G$ has branchwidth at most $k$.
\end{proposition}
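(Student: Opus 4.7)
The plan is to follow the ratcatcher approach of Seymour and Thomas, which leverages planar duality to turn the computation of branchwidth into a pursuit-evasion game on the sphere. First I would use \cref{lem_branchwidthofblocks} to reduce to $2$-connected graphs, so that the faces of a fixed planar embedding of $G$ are bounded by cycles. A planar embedding can be computed in linear time, after which every face is well-defined as a topological disk and every edge lies on the boundary of exactly two faces.

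The key geometric object is a \emph{noose}: a simple closed curve in the sphere meeting $\psi(G)$ only in vertices. For a $2$-connected plane graph, every edge $e$ of a branch-decomposition $(T,\tau)$ can be realized by a noose whose vertex set equals $\partial(X_e)$, and conversely every ``tree of nooses'' in general position yields a branch-decomposition. Hence $\bw(G)\leq k$ is equivalent to the existence of a recursive decomposition of the sphere by nooses each touching at most $k$ vertices. I would then use the tangle/branchwidth duality: $\bw(G) \leq k$ iff $G$ admits no tangle of order $k+1$. For a plane graph, tangles of order $k+1$ correspond to consistent orientations of all nooses of length at most $k$, which is exactly the kind of obstruction the ratcatcher game is designed to detect.

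The algorithmic core is then the game itself. A \emph{rat} occupies a face of the embedding; a \emph{catcher} consists of up to $k$ agents sitting on vertices forming a noose, and at each step the catcher either slides an agent along an edge or replaces the current noose by a ``neighboring'' one, while the rat must remain in a face on the forbidden side. One shows (in the main combinatorial lemma) that the catcher wins starting from any configuration iff $G$ has no tangle of order $k+1$, i.e.\ iff $\bw(G)\leq k$. The game positions can be encoded by a noose of length at most $k$ together with a choice of side, giving $\Ocal(n^2)$ reachable positions; the winning region is then computed by a backward fixed-point iteration, with constant amortized work per transition thanks to the fact that consecutive nooses in a play differ in only a bounded number of vertices.

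The main obstacle is the equivalence between a winning catcher strategy and the existence of a width-$k$ branch-decomposition. The easy direction follows by letting the catcher play the nooses realizing a given branch-decomposition top-down. The hard direction requires converting a winning strategy into an actual ternary tree of nooses: this is done by induction on the potential of the game, using the recursive structure of the catcher's strategy to glue sub-decompositions along the separating nooses it produces. A secondary obstacle is bookkeeping: one has to verify that restricting attention to nooses of length at most $k$ that avoid redundant ``loops'' keeps the state space at $\Ocal(n^2)$ and that transitions can be enumerated in amortized constant time, which together with \cref{prop_findembedding} specialised to the sphere gives the claimed $\Ocal(n^2)$ bound.
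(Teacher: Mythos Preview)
The paper does not prove this proposition at all; it is quoted verbatim as a black-box result from \cite{SeymourT1994Call} and used only as a subroutine. So there is nothing to compare against on the paper's side.

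As for your sketch itself: the high-level picture (reduce to $2$-connected plane graphs, interpret separations as nooses, set up a pursuit game whose winning condition coincides with the non-existence of a tangle of order $k{+}1$, solve by backward induction) is faithful to Seymour and Thomas. The place where your outline would not go through as written is the state-space count. You assert that ``a noose of length at most $k$ together with a choice of side'' gives $\Ocal(n^2)$ reachable positions, but the number of such nooses is not bounded by $\Ocal(n^2)$ in any obvious way, and Seymour--Thomas do \emph{not} enumerate nooses. Their actual route is to pass to the medial graph $M(G)$, prove that $\bw(G)=\tfrac{1}{2}\mathsf{cw}(M(G))$ for $2$-connected plane $G$, and then compute carvingwidth via the antipodality/tilt framework: the game is played with the rat on an edge of $M(G)$ and the catcher on a vertex (equivalently, a face of $G$), so positions are genuinely pairs from an $\Ocal(n)\times\Ocal(n)$ product, and transitions are local moves in the radial structure. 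That reformulation is precisely what buys the $\Ocal(n^2)$ bound; without it, your ``bookkeeping obstacle'' is not a minor technicality but the whole difficulty.
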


This is remarkable, since this problem is NP-complete in general graphs \cite{SeymourT1994Call}.
In contrast, Seymour and Thomas provide a polynomial time algorithm for finding an optimal branch-decomposition for a planar graph in time $\Ocal(n^4)$ \cite{SeymourT1994Call}.
This has since been improved to the following.

\begin{proposition}[Gu and Tamaki \cite{GuT2008Optimal}]\label{prop_catchratbranchdecomp}
    Given a planar graph $G$ and a positive integer $k$, there exists an algorithm that returns a branch-decomposition of $G$ with width $\bw(G)$ in $\Ocal(n^3)$-time.
\end{proposition}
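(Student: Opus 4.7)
The plan is to combine the Seymour--Thomas Ratcatcher (\cref{prop_catchrat}) with a top-down divide-and-conquer construction of the branch-decomposition tree. First, I would determine the exact value $k = \bw(G)$ by binary search over $\{1, \ldots, n\}$ using the $\Ocal(n^2)$-time decision procedure, costing $\Ocal(n^2 \log n)$ overall. Once $k$ is known, the task is explicit: build a ternary tree $T$ with a bijection $\tau \colon E(G) \to L(T)$ of width exactly $k$.

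The natural strategy is to build $T$ top-down via balanced splits. At each step I maintain a collection of edge-blobs whose union is $E(G)$, initialised to $\{E(G)\}$. For a blob $S$ with $|S| \geq 2$, I seek to split it into two nonempty parts $S_1 \mathbin{\dot\cup} S_2$ such that $|\partial(S_1)| = |\partial(S_2)| \leq k$. Using the duality between branchwidth of planar graphs and carvingwidth of their medial graphs, such a split is realised by a \emph{noose} $N$ in the sphere that meets at most $k$ vertices of $G$ and has edges of $S$ on both sides. The Ratcatcher, appropriately adapted, not only decides branchwidth but also, through its tangle/antipode structure, produces such a noose: tangles of order $k+1$ correspond to branches of the desired decomposition, and their pairwise minimal separating nooses give the splitting curves. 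Each such splitting noose can be located by a restricted run of the Ratcatcher on the subproblem induced by $S$ and its planar embedding.

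To hit the $\Ocal(n^3)$ target, I would observe that the final tree $T$ has $\Ocal(n)$ internal edges, and each one is produced by one invocation of the Ratcatcher on a planar subinstance with at most $n$ vertices, contributing $\Ocal(n^2)$ work. The subproblems remain planar (the noose is contractible on one side after cutting) and inherit a branchwidth bound of $k$, so the recursion closes. Summing over the $\Ocal(n)$ internal edges yields $\Ocal(n^3)$ total work, dominating the initial $\Ocal(n^2 \log n)$ binary search.

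The hard part will be three-fold bookkeeping: (i) maintaining planar embeddings of the subinstances so that the Ratcatcher can be re-invoked on them without re-embedding from scratch; (ii) ensuring that the chosen splitting noose is \emph{balanced} enough that the recursion depth and cumulative work stay within budget, rather than producing a degenerate peeling that blows up the analysis; and (iii) proving that a noose certified by the tangle machinery genuinely induces a branch-decomposition edge of width $\leq k$ on both sides, i.e.\ that the recursion does not accidentally increase width. Careful data structures for nooses, medial graphs, and the tangle family, together with a balanced-separator lemma for tangles of order $k+1$ in a planar graph, appear to be exactly what is needed to turn the Seymour--Thomas $\Ocal(n^4)$ construction into the claimed $\Ocal(n^3)$ algorithm.
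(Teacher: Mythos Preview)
The paper does not prove this proposition at all: it is stated with a citation to Gu and Tamaki \cite{GuT2008Optimal} and used as a black box. There is therefore no ``paper's own proof'' to compare against; your task here was simply to quote the result, not to reprove it.

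As for the sketch itself: the high-level shape (compute $k$ exactly via the Ratcatcher, then build the decomposition top-down by repeatedly finding width-$\leq k$ nooses) is broadly in the spirit of how one passes from the decision problem to the construction problem, but the accounting you propose does not close as written. The Seymour--Thomas construction already achieves $\Ocal(n^4)$ by essentially this route, and the obstacle to $\Ocal(n^3)$ is precisely that a naive top-down split need not be balanced, so the number of Ratcatcher calls on near-full-size instances can be $\Theta(n)$ rather than $\Ocal(1)$ per level. Your point (ii) is not ``bookkeeping'' but the entire content of the improvement: Gu and Tamaki obtain the speed-up by working with the medial graph and using a carefully amortised divide-and-conquer in which the total size of all Ratcatcher instances across the recursion is $\Ocal(n^2)$, not by assuming each of $\Ocal(n)$ internal edges costs $\Ocal(n^2)$. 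Without an explicit balanced-separator guarantee for nooses (which the Ratcatcher by itself does not hand you), the recursion you describe could degenerate and reproduce only the $\Ocal(n^4)$ bound. So the proposal identifies the right objects but leaves the one genuinely nontrivial step---controlling recursion depth and cumulative instance size---as an unproven assertion.
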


\paragraph{Tangles.}
A \emph{separation} in a graph $G$ is a pair $(A,B)$ of vertex sets such that $A \cup B = V(G)$ and there is no edge in $G$ with one endpoint in $A \setminus B$ and the other in $B \setminus A$.
The \emph{order} of $(A,B)$ is $|A\cap B|$.
If $k$ is a positive integer, we denote the collection of all separations $(A,B)$ of order less than $k$ in $G$ as $\mathcal{S}_k(G)$.

An \emph{orientation} of $\mathcal{S}_k(G)$ is a set $\mathcal{O}$ such that for all $(A,B) \in \mathcal{S}_k(G)$ exactly one of $(A,B)$ and $(B,A)$ belongs to $\mathcal{O}$. 
A \emph{tangle} of order $k$ in $G$ is an orientation $\mathcal{T}$ of $\mathcal{S}_k(G)$ such that for all $(A_1,B_1), (A_2,B_2), (A_3,B_3) \in \mathcal{T}$, it holds that $G[A_1] \cup G[A_2] \cup G[A_3] \neq G$.
If $\mathcal{T}$ is a tangle and $(A,B)\in\mathcal{T}$ we call $A$ the \emph{small side} and $B$ the \emph{big side} of $(A,B)$.

Let $G$ be a graph and $\mathcal{T}$ and $\mathcal{D}$ be tangles of $G$.
We say that $\mathcal{D}$ is a \emph{truncation} of $\mathcal{T}$ if $\mathcal{D}\subseteq\mathcal{T}$.

Tangles witness the fact that a graph has large branchwidth.
\begin{proposition}[Robertson and Seymour \cite{RobertsonS1991Graph}]\label{prop_tanglebranchwidthduality}
    Let $G$ be a graph and let $k$ be an integer with $k \geq 2$.
    Then $G$ has a tangle of order $k$ if and only if $G$ has branchwidth at least $k$. 
\end{proposition}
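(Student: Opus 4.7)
My plan is to handle the two directions of the equivalence separately, with the forward direction being a direct contradiction argument and the backward direction being the substantive part.

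For the forward direction (a tangle of order $k$ forces $\bw(G)\geq k$), I would argue by contradiction. Assume $\mathcal{T}$ is a tangle of order $k$ and $(T,\tau)$ is a branch-decomposition of $G$ of width at most $k-1$. Each edge $e\in E(T)$ induces a partition $(X_e,Y_e)$ of $E(G)$ and hence a separation $(A_e,B_e)\in \mathcal{S}_k(G)$, which $\mathcal{T}$ must orient. Orient $e$ toward the side that $\mathcal{T}$ declares the ``big side''. A small case analysis using the tangle axioms (and the assumption $k\geq 2$) shows that every edge of $T$ incident to a leaf must point away from the leaf, so every leaf is a source. Following arrows through the tree, one reaches an internal sink $v$: a node of degree $3$ with all three incident edges pointing inward. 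The three ``small sides'' $A_1,A_2,A_3$ corresponding to these edges jointly cover $E(G)$, since the three components of $T-v$ partition the leaves of $T$ and hence (via $\tau$) partition $E(G)$. But then $G[A_1]\cup G[A_2]\cup G[A_3]=G$, contradicting the tangle axiom.

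For the backward direction ($\bw(G)\geq k$ forces the existence of a tangle of order $k$), I would proceed again by contradiction: assume no tangle of order $k$ exists in $G$, and try to deduce the existence of a branch-decomposition of width less than $k$. The idea is that the failure of every candidate orientation of $\mathcal{S}_k(G)$ to satisfy the tangle axioms provides, for each candidate, a ``bad triple'' $(A_1,B_1),(A_2,B_2),(A_3,B_3)$ with $G[A_1]\cup G[A_2]\cup G[A_3]=G$. Such a triple acts as a low-order split of $G$ into three edge-classes whose common vertex boundaries each have size less than $k$. One then recursively applies this procedure to each of the three smaller pieces and glues the resulting partial branch-decompositions into a single ternary tree $T$, mapping the edges of $G$ bijectively onto its leaves. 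The submodularity of the separation-order function (which allows any two conflicting separations to be ``uncrossed'' into a pair of no greater order) ensures that the recursive splits can be arranged to be mutually compatible, and so every tree-edge of $T$ corresponds to a separation of order at most $k-1$.

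The main obstacle is this backward direction: producing a valid ternary tree with a bijection onto $E(G)$ whose every tree edge corresponds to a separation of order at most $k-1$ requires a careful induction ensuring that the recursive splitting both terminates (at singletons corresponding to individual edges of $G$) and never violates the width bound along the way. This is precisely the content of Robertson and Seymour's original proof, which proceeds by induction on $|E(G)|$ and repeatedly uses submodular uncrossing to turn ``bad triples'' of separations into usable branch-decomposition steps, and is the part of the argument I would expect to consume almost all of the real work.
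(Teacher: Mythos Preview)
The paper does not supply its own proof of this proposition: it is stated with a citation to Robertson and Seymour \cite{RobertsonS1991Graph} and used as a black box, so there is nothing in the paper to compare your argument against. That said, your sketch is essentially the standard Robertson--Seymour proof: the forward direction via orienting the edges of a putative small branch-decomposition and finding a degree-$3$ sink, and the backward direction via recursively splitting along ``bad triples'' obtained from failed tangle orientations, with submodularity used to keep the pieces compatible.
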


If we are dealing with graph with a 2-cell-embedding in a surface of positive genus with high representativity, then we are guaranteed to find a large tangle and thus know that the graph we embedded has high branchwidth.
We note that the version of this theorem we state here is less specific than what Robertson and Seymour actually prove (see (4.1) in \cite{RobertsonS1994Graph}).

\begin{proposition}[Robertson and Seymour \cite{RobertsonS1994Graph}]\label{prop_highrepresentativitygivestangle}
    Let $k$ be a positive integer and let $G$ be a graph with a 2-cell-embedding in a surface of positive genus with representativity $k$.
    Then $G$ has a tangle of order $k$.
\end{proposition}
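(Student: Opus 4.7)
The plan is to construct the required tangle directly from the embedding $\psi\colon G\to\Sigma$ by using the topology of $\Sigma$ to pick, for each separation of order $<k$, a canonical ``big side.'' For every separation $(A,B)\in\mathcal{S}_k(G)$ with separator $S=A\cap B$, we have $|S|<k$, which is strictly less than the representativity; the intuition is then that $S$ is too small to interact with the nontrivial topology of $\Sigma$, so it must sit inside a disk, and the opposite side of the separation is the one ``carrying the handles and crosscaps.''

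The first substantive step, which is the crux of the argument, is an \emph{encirclement lemma}: for every set $S\subseteq V(G)$ with $|S|<k$, there exists a closed disk $\Delta\subseteq\Sigma$ with $\psi(S)\subseteq\Delta$ and $\partial\Delta\cap\psi(G)\subseteq\psi(S)$. The idea is to work in the radial graph of $\psi$, take a shortest closed walk encircling $S$ among face-vertices, and observe that, if such a walk encircled a non-disk region, we could extract a noncontractible curve meeting at most $|S|<k$ vertices of $G$, contradicting the representativity hypothesis (this is in the spirit of \cref{prop_findnoncontrcurve}). Once such a $\Delta$ exists, the complement $\Sigma\setminus \mathrm{int}(\Delta)$ is a surface-with-boundary whose underlying surface still has positive genus, so the two sides of $\partial\Delta$ are topologically asymmetric.

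I would then define $\mathcal{T}\coloneqq \{(A,B)\in\mathcal{S}_k(G):\psi(G[A])\subseteq\Delta\text{ for some closed disk }\Delta\subseteq\Sigma\}$. The encirclement lemma applied to $S=A\cap B$ yields at least one of $(A,B),(B,A)$ in $\mathcal{T}$; that at most one side qualifies follows because $\psi$ is a 2-cell embedding into a positive-genus surface, and so $\psi(G)$ itself is not contained in any closed disk of $\Sigma$. For the tangle axiom, suppose $(A_i,B_i)\in\mathcal{T}$ for $i=1,2,3$ with $G[A_1]\cup G[A_2]\cup G[A_3]=G$, witnessed by disks $\Delta_1,\Delta_2,\Delta_3$. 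Then $\psi(G)\subseteq \Delta_1\cup\Delta_2\cup\Delta_3$, and since $\Sigma\setminus\psi(G)$ is a disjoint union of open disks (by 2-cellness), $\Sigma$ itself is a union of finitely many closed disks glued along disks, forcing $\Sigma$ to have Euler characteristic $2$ and contradicting positive genus. The main obstacle is precisely the encirclement lemma: carefully turning the purely combinatorial hypothesis ``representativity $\geq k$'' into the topological conclusion ``every $(<k)$-set lies in a disk meeting $\psi(G)$ only in that set,'' which requires bookkeeping of faces and a minimality argument on curves in the surface.
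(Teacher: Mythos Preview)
The paper does not prove this proposition; it is quoted from Robertson and Seymour (Graph Minors~XI, specifically (4.1) there), so there is no in-paper proof to compare against. Your overall strategy---declaring the small side of a separation to be the one whose drawing fits in a closed disk---is indeed the Robertson--Seymour approach, and your encirclement lemma is essentially their key ingredient.

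However, your verification of the tangle axioms has genuine gaps. First, your definition of $\mathcal{T}$ omits the boundary constraint $\partial\Delta\cap\psi(G)\subseteq\psi(A\cap B)$ on the witnessing disk; without it ``$\psi(G[A])$ lies in some closed disk'' is far too weak a condition. Second, your ``at most one side'' argument does not work: the fact that $\psi(G)$ is not contained in any single closed disk does not prevent $\psi(G[A])$ and $\psi(G[B])$ from lying in two \emph{different} closed disks simultaneously. Ruling this out requires the boundary constraint and an appeal to representativity, not just positive genus.

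Most seriously, your argument for the three-small-sides axiom is incorrect. From $\psi(G)\subseteq\Delta_1\cup\Delta_2\cup\Delta_3$ and 2-cellness you conclude that $\Sigma$ is ``a union of finitely many closed disks glued along disks, forcing $\Sigma$ to have Euler characteristic~$2$.'' But every closed surface whatsoever is a finite union of closed disks---any triangulation exhibits this---so no Euler-characteristic conclusion can be drawn from such a covering. The actual Robertson--Seymour argument uses the boundary constraints on the $\Delta_i$ together with the face structure to produce a noncontractible curve meeting $G$ in fewer than $k$ vertices (contradicting representativity), not a bare covering-by-disks count. As written, your T3 verification does not go through.
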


\paragraph{Minors.}
We formally introduce minors to later discuss how to find grid-minor models in case the branchwidth of the graphs we consider turns out to be too high.
A function $\varphi \colon V(H) \rightarrow 2^{V(G)}$ is called a \emph{minor model (of $H$ in $G$)} (or simply a \emph{model}) if $\varphi(V(H))$ is a set of pairwise disjoint vertex sets, each inducing a connected subgraph of $G$, and for each edge $uv \in E(H)$ there exists an edge $ab \in E(G)$ with $a \in \varphi(u)$ and $b \in \varphi(v)$.
A graph $H$ is a \emph{minor} of a graph $G$ if and only if there exists a minor model of $H$ in $G$.
This also causes us to say that $G$ has an \emph{$H$-minor}.

\paragraph{Treewidth and tree-decompositions.}
Since we will need it to apply some external results, we also give a formal definition of treewidth, which is asymptotically equivalent to branchwidth and generally more popular, but only ancillary to our approach.
A \emph{tree-decomposition} of a graph $G$ is a pair $\mathcal{T} = (T, \beta)$, where $T$ is a tree and $\beta \colon V(T) \rightarrow 2^{V(G)}$ such that
\begin{itemize}
    \item for every edge $e \in E(G)$, there exists a $t \in V(T)$ such that $e \subseteq \beta(t)$, and
    \item for every vertex $v \in V(G)$, the set $\beta^{-1}(v)$ induces a connected, non-empty subtree of $T$.
\end{itemize}
The \emph{width} of $\mathcal{T}$ is the maximum value of $|\beta(t)| - 1$ over all $t \in V(T)$ and the \emph{treewidth} $\mathsf{tw}(G)$ of $G$ is the minimum width over all tree-decompositions of $G$.

For each $t \in V(T)$, we define the \emph{adhesion sets of $t$} as the sets in $\{\beta(t) \cap \beta(d) ~\!\colon\!~ d \in N_T(t)\}$, and the maximum size of them is called the \emph{adhesion of $t$}.
The \emph{adhesion} of $\mathcal{T}$ is the maximum adhesion of a node of $\mathcal{T}$.\footnote{For tree-decompositions with a single vertex we define the adhesion to be 0.}
If $T$ has a designated root $r$, we call $(T,r,\beta)$ a \emph{rooted} tree-decomposition.

As mentioned in the introduction, branchwidth is asymptotically equivalent to branchwidth.

\begin{proposition}[Robertson and Seymour \cite{RobertsonS1991Graph}]\label{prop:twbwequiv}
    Let $G$ be a non-acyclic graph.
    Then we have $\bw(G)-1 \leq \tw(G) \leq \lfloor \nicefrac{3}{2}\cdot \bw(G)\rfloor-1$.
\end{proposition}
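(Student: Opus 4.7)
The plan is to prove the two inequalities via explicit conversions between branch- and tree-decompositions.

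For the upper bound $\tw(G) \leq \lfloor \nicefrac{3}{2}\bw(G)\rfloor - 1$, I would take an optimal branch-decomposition $(T,\tau)$ of width $k=\bw(G)$ and build a tree-decomposition on the same underlying tree $T$. Each leaf $\ell \in L(T)$, corresponding to an edge $uv \in E(G)$, receives the bag $\beta(\ell) \coloneqq \{u,v\}$, while each internal node $v$ with incident $T$-edges $e_1, e_2, e_3$ receives
\[
\beta(v) \coloneqq \partial(X_{e_1}) \cup \partial(X_{e_2}) \cup \partial(X_{e_3}).
\]
Verifying the tree-decomposition axioms is routine: each $G$-edge is covered at its corresponding leaf, and for each $u \in V(G)$ the set $\{t \in V(T) : u \in \beta(t)\}$ is precisely the minimal subtree of $T$ spanning the leaves $\tau(e)$ for the $G$-edges $e$ incident to $u$.

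The delicate width estimate $|\beta(v)| \leq \lfloor 3k/2 \rfloor$ at every internal node is what produces the $\nicefrac{3}{2}$ factor. The key observation is that any $u \in \beta(v)$ must be incident to $G$-edges from at least two of the three edge-classes $E_1, E_2, E_3$ into which $E(G)$ is partitioned according to the three branches of $T$ at $v$: if $u$ were incident to edges of a single class $E_i$ only, then $u$ would belong to no $\partial(X_{e_j})$. Hence every $u \in \beta(v)$ contributes at least twice to $|\partial(X_{e_1})| + |\partial(X_{e_2})| + |\partial(X_{e_3})| \leq 3k$, giving $|\beta(v)| \leq \lfloor 3k/2 \rfloor$. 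Since $G$ is non-acyclic, $\bw(G) \geq 2$, so the bound $\lfloor 3k/2\rfloor - 1$ dominates the leaf-bag contribution $2-1 = 1$, and the overall tree-decomposition has width at most $\lfloor 3\bw(G)/2\rfloor - 1$.

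For the lower bound $\bw(G) - 1 \leq \tw(G)$, I would start from an optimal tree-decomposition $(T,\beta)$ of width $w$ and build a branch-decomposition of width at most $w+1$. For each $e = uv \in E(G)$, I attach a new pendant leaf $\ell_e$ to some node $t_e$ of $T$ with $\{u,v\} \subseteq \beta(t_e)$, and define $\tau$ by sending $e$ to $\ell_e$. The resulting tree may have internal nodes of arbitrary degree, so I would replace each such node by a small binary gadget to obtain a ternary tree $T'$. Every middle set on an internal edge of $T'$ is contained in an adhesion set of $(T,\beta)$ and hence has size at most $w+1$, while pendant edges have middle sets of size at most $2 \leq w+1$. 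The main obstacle is precisely the non-trivial $\nicefrac{3}{2}$ factor in the upper bound: without the two-out-of-three counting argument at each ternary node, one obtains only $\tw \leq 3\bw - 1$, so the core of the work is organising the separation structure at each internal node of a branch-decomposition to show that every bag element appears in at least two of the three middle sets.
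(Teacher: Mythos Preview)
The paper does not prove this proposition; it is stated with a citation to Robertson and Seymour \cite{RobertsonS1991Graph} and used as a black box. Your argument is essentially the original one from that reference and is correct. One minor imprecision: in the lower-bound direction, when you replace a high-degree node $t$ by a ternary gadget, the middle sets across the \emph{new} edges inside the gadget are not literally contained in an adhesion set of $(T,\beta)$ but rather in the bag $\beta(t)$ itself; since $|\beta(t)|\le w+1$ this does not affect the bound.
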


\section{Approximating the branchwidth of graphs of bounded Euler genus}\label{sec:EulerGenus}
The core result of this section is that, if we are given some integer $k$, we can efficiently confirm that either the branchwidth of a graph with a 2-cell-embedding in some given surface of genus $g$ is at most $g(k-1) + k$ or the branchwidth of said graph is at least $k$.
In particular, at the cost of a worse runtime and slightly worse bounds on our approximation, we can even efficiently provide a branch-decomposition, if the branchwidth is low, or a grid-minor model as a witness that the branchwidth is above some specified bound.

\subsection{Dealing with apices}
We start with a simple lemma that will remain useful in the later sections of this article.

\begin{lemma}\label{lem_branchdecompositionviacutset}
    Let $c$ be an integer, let $G$ be a graph with $\bw(G) \geq 2$, and let $X \subseteq V(G)$ be a set of vertices with $|X| \leq c$.
    Then $\bw(G) \leq \bw(G - X) + c$.
    In particular, if we are provided with branch-decompositions of width at most $\bw(G - X)$ for each of the blocks of $G - X$, we can construct a branch-decomposition of $G$ of width at most $\bw(G - X) + c$ in time $\Ocal(c m)$.
\end{lemma}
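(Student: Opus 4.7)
The plan is to first turn the provided block branch-decompositions into a branch-decomposition $(T,\tau)$ of $G-X$ of width $b \coloneqq \bw(G-X)$ via \cref{lem_branchwidthofblocks} (in $\Ocal(n)$ time), and then attach the ``missing'' edges $F \coloneqq E(G)\setminus E(G-X)$---which are precisely the edges of $G$ with at least one endpoint in $X$---to $T$ in such a way that the maximum cut grows by at most $c$. The inequality $\bw(G)\le \bw(G-X)+c$ then follows from the existence of the resulting decomposition, and the algorithmic claim follows from implementing the attachment in linear time per added leaf.

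For every vertex $v\in V(G-X)$ incident to some edge of $F$, I fix once and for all a \emph{home leaf} $\ell_v\in L(T)$ satisfying $v\in\tau(\ell_v)$ (the few $v$ without incident $(G-X)$-edges can be handled by attaching a sentinel edge beforehand). Each edge $xv\in F$ with $x\in X$, $v\in V(G-X)$ is then assigned to $\ell_v$, and each edge of $F$ with both endpoints in $X$ is assigned to an arbitrarily chosen fixed leaf $\ell^*$ of $T$. At every leaf $\ell$ that receives a non-empty set $F_\ell$ of new edges I subdivide the tree-edge incident to $\ell$ by a new node $w_\ell$ and grow from $w_\ell$ a ternary subtree whose leaves are $\ell$ together with one fresh leaf per edge of $F_\ell$; $\tau$ is extended by labelling the new leaves with their respective edges. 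The result is a branch-decomposition $(T',\tau')$ of $G$.

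For the width analysis I consider first an edge $e'$ of $T'$ inherited from $T$, inducing a bipartition $L_A\cup L_B$ on $L(T)$. Each $x\in X$ contributes at most once, for a total of at most $|X|\leq c$. A vertex $v\in V(G-X)$ appears in the new cut at $e'$ only if it was already in the old $(G-X)$-cut at $e'$: since the leaves of $T$ whose $\tau$-image contains $v$ span a subtree of $T$, whenever all $(G-X)$-edges of $v$ lie on side $A$ we must have $\ell_v\in L_A$, so every $F$-edge of $v$ is attached on side $A$ as well and $v$ contributes nothing new. Hence the cut at $e'$ has size at most $b+c$. For an edge $e'$ inside one of the attached ternary subtrees grown at a leaf $\ell$ with $\tau(\ell)=uv$, every edge on the $\ell$-side is incident to $\{u,v\}\cup X$, so the cut has size at most $2+|X|\leq b+c$, where the final inequality uses $\bw(G)\geq 2$ to dispense with the degenerate regime $b<2$; in that regime $G-X$ is a star forest and a direct variant of the same construction yields the bound.

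The decisive subtle point is the correctness of the home-leaf rule; it rests on the elementary observation that for any fixed $v$ the leaves of $T$ whose $\tau$-image contains $v$ span a subtree of $T$, which forces $\ell_v$ to sit on the same side of every cut of $T$ as the other $(G-X)$-edges of $v$ on that side. Everything else---handling $F$-edges with both endpoints in $X$ (all sent to $\ell^*$), verifying the edge cases $b<2$, and combining with the block-decompositions---is routine bookkeeping. The algorithm performs $\Ocal(|L(T)|)=\Ocal(m)$ local tree modifications each adding at most $c$ leaves, giving the claimed $\Ocal(c m)$ total running time.
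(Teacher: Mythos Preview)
Your proposal is correct and follows essentially the same approach as the paper: both combine the block decompositions of $G-X$ into a single branch-decomposition, then attach each edge of $F$ near a leaf whose $\tau$-image is already incident to its non-$X$ endpoint (with $X$-$X$ edges and edges at isolated vertices of $G-X$ handled separately), and both establish the width bound by arguing that a vertex $v\in V(G-X)$ can enter a cut of the new tree only if it was already in the corresponding cut of the old one. Your phrasing via the ``leaves containing $v$ span a subtree'' observation is a cleaner packaging of exactly the contradiction argument the paper spells out; the handling of isolated vertices of $G-X$ and of the regime $b<2$ is slightly more hand-wavy in your write-up than in the paper (which routes those edges through a dedicated extra leaf $v_{\ell+1}$), but nothing is missing.
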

\begin{proof}
    Let $b \coloneqq \bw(G - X)$.
    Then we can derive a branch-decomposition $(T',\tau')$ of $G - X$ width at most $b$ in linear time via \zcref{lem_branchwidthofblocks}.
    We may assume that $X$ is non-empty and let $E_X$ be the set of edges in $E(G)$ that have at least one endpoint in $X$.
    
    Let $k \coloneqq |V(G - X)|$, let $z_1, \ldots , z_k$ be the vertices of $G - X$, and let $E_0,E_1, \ldots , E_k \subseteq E_X$ be a partitioning of $E_X$ into (possibly empty) sets, such that for all $i \in [k]$, the edge $e \in E_X$ is in $E_i$ if $e = z_ix$ for some $x \in X$ and thus $E_0 = E_X \setminus \bigcup_{i=1}^k E_i$.
    We assume that the vertices of $G - X$ are labelled such that there exists a $k' \in [k]$ with the property that for any $i \in [k']$ there exists an edge $e_i \in E(G - X)$ that is incident to $z_i$ and for any $i \in [k'+1,k]$, the vertex $z_i$ is isolated in $G - X$.
    For each $i \in [k']$, we let $(\widehat{T}_i,\widehat{\tau}_i)$ be defined such that $\widehat{T}_i$ is an arbitrary ternary tree with $|E_i|$ leaves and $\widehat{\tau}_i$ is a bijection from $E_i$ to $L(\widehat{T}_i)$.
    We let $(\widehat{T}_0,\widehat{\tau}_0)$ be the pair of a ternary tree $\widehat{T}_0$ with $| E_0 \cup \bigcup_{i=k'+1}^k E_i |$ leaves\footnote{Here and for the remainder of the proof, we ignore the possibility that $| E_0 \cup \bigcup_{i=k'+1}^k E_i | = 0$. This is safe as our constructions could easily be simplified in this specific case.} and a bijection $\widehat{\tau}_0$ from $E_0 \cup \bigcup_{i=k'+1}^k E_i$ to $L(\widehat{T}_0)$.
    
    This allows us to modify the branch-decomposition $(T',\tau')$ of $G - X$ as follows.
    First, we subdivide an arbitrary edge of $\widehat{T}_0$, letting $w_0$ be the subdivision vertex and identify $w_0$ with the subdivision vertex resulting from subdividing an arbitrary edge in $T'$.
    We call the resulting tree $T''$.
    Let $u_i \coloneqq \tau'(e_i)$ for all $i \in [k']$.
    For each $i \in [k']$ we first subdivide an arbitrary edge in $\widehat{T}_i$\footnote{We gloss over the case in which $T_i$ consists of a single vertex. This makes our construction simpler and does not influence the truth of our statement.}, letting $w_i$ be the subdivision vertex.
    Then, we subdivide the unique edge of $T''$ that is incident to $u_i$, letting $w_i'$ be the subdivision vertex, and finish by identifying $w_i$ and $w_i'$.
    Let $T$ be the resulting tree.
    We extend $\tau'$ and the other bijections to $T$ in the obvious way, meaning $\tau$ is a bijection from $E(G) = E_X \cup E(G - X)$ to $L(T)$ such that $\tau(e)$ is defined as $\tau'(e)$, if $e \in E(G - X)$, as $\widehat{\tau}_0(e)$, if $e \in E_0 \cup \bigcup_{i=k'+1}^k E_i$, and as $\widehat{\tau}_i(e)$, if $e \in E_i$ for some $i \in [k']$.

    Every edge $e \in E(T)$ now naturally defines a bipartition $X_e,Y_e$ of the edges of $G$ via $\tau$.
    It is easy to see that for any edge $e \in E(T)$ that corresponds to an edge of $\bigcup_{i=0}^{k'} E(\widehat{T}_i)$, we have $\partial_G(X_e) \leq c$.
    Consider an edge $e \in E(T)$ that corresponds to an edge of $T'$.
    Then this edge in particular induces a bipartition $X_e',Y_e'$ of the edges in $G - X$ via $T'$ and $\tau'$.
    Via our assumption, we know that $\partial_{G - X}(X_e') \leq \bw(G - X)$.
    
    Suppose that there exists a $v \in \partial_G(X_e) \setminus ( \partial_{G - X}(X_e') \cup X )$.
    This implies, w.l.o.g.\ that there exists some $f \in E(G - X) \cap X_e$ and $g \in E_X \cap Y_e$ such that $f$ and $g$ are both incident to $v$.
    We first note that $g \not\in E_0 \cup \bigcup_{i=k'+1}^k E_i$ thanks to the definition of $E_0$ and our choice of $k'$.
    Thus $g \in E_i$ for some $i \in [k']$.
    Next, we note that we may assume that $e$ is not incident to $u_i$ for any $i \in [k']$, as this would imply that $|\partial_G(X_e)| \leq 2$, which is acceptable thanks to our assumption that $\bw(G) \geq 2$.
    Together this implies that $e_i,g \in Y_e$.
    However, this means that $v \in \partial_{G - X}(X_e')$, contradicting our initial choice of $v$.
    This confirms that we have $\partial_G(X_e) \leq \bw(G - X) + c$ for every edge $e \in E(T)$.

    To see that this procedure runs in linear time in the number of edges of $G$, we first note that the number of blocks of $G$ is bounded by $|V(G)|$ and we perform a constant number of operations for each of the blocks to construct the initial tree $T'$ and $\tau'$.
    Since we give no explicit relation between the size of $X$ and $G$, we can only give the estimate from above on $|E(X)|$ as $|E(G)|$, which ultimately dominates the runtime of our procedure, as we need to place each edge in $E(G)$ into the branch-decomposition.
    Finally, note that we include $c$ in the runtime estimate only for the technical possibility that $c \geq m$, implying that $G$ is a forest.
\end{proof}

\subsection{Approximating the branchwidth of a graph of bounded genus}
We will also need a pair of generalizations of a result on the existence of grids in planar graphs found in \cite{RobertsonST1994Quickly} for the setting of graphs embedded in surfaces of positive genus.
For this purpose, let us first properly introduce what we mean by a grid.

Let $n,m \in \mathbb{N}$ be two positive integers.
The \emph{$(n \times m)$-grid} is the graph $G$ with the vertex set $V(G) = [n] \times [m]$ and the edges
\begin{align*}
E(G) = \big\{ \{ (i, j) , (\ell , k) \} ~\!\colon\!~    & i, \ell \in [n], \ j,k \in [m], \text{ and } \ \\
                                                        & ( |i - \ell| = 1  \text{ and } j = k ) \text{ or } ( |j - k| = 1 \text{ and } i = \ell ) \big\} .
\end{align*}
An \emph{$n$-grid} is an $(n \times n)$-grid.
A central result from the Graph Minors Series tells us that minor models of large grids are witnesses of the fact that a graph has high branchwidth.
In particular, there exists a function $h \colon \mathbb{N} \rightarrow \mathbb{N}$ such that for every positive integer $k \in \mathbb{N}$, every graph $G$ with $\tw(G) \geq h(k)$ contains a $k$-grid as a minor \cite{RobertsonS1986Grapha}.
Currently, the best known bound for the function $h(k)$ is $\Ocal(k^9\cdot \polylog(k))$ \cite{ChuzhoyT2021Tighter}.
However, for graphs embeddable on surfaces, better bounds are known, beginning with a classic linear bound for planar graphs.

\begin{proposition}[Robertson, Seymour, and Thomas \cite{RobertsonST1994Quickly} (see (6.3))]\label{thm_planargrid}
    Let $r$ be a positive integer and let $G$ be a planar graph.
    If $\bw(G) \geq 4r$ then $G$ contains an $r$-grid minor.
\end{proposition}

We will use a generalised version of this result for graphs of bounded genus, which we present together with the core tool used in its proof that discusses representativity.

\begin{proposition}[Demaine, Fomin, Hajiaghayi, and Thilikos \cite{DemaineFHT2005Subexponential} (see Lemma 3.3)]\label{prop_gimmedagridfacewidth}
    Let $r$ be a positive integer and let $G$ be a graph with a 2-cell embedding in a surface of positive genus with representativity at least $4r$.
    Then $G$ contains an $r$-grid as a minor.
\end{proposition}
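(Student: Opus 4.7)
The plan is to build an $r$-grid minor by exhibiting $r$ concentric non-contractible cycles together with $r$ pairwise internally disjoint radial paths connecting them, and then contracting suitable arcs. The concentric cycles will come from BFS layers around a shortest non-contractible cycle of $G$; the representativity assumption will guarantee both that there are enough layers and that each cycle is long enough to accommodate the radial paths.

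First, I would fix a shortest non-contractible cycle $C_0$ of $G$. Since $C_0$ realizes a non-contractible curve of $\Sigma$ meeting $V(G)$ in exactly $|V(C_0)|$ points, the hypothesis yields $|V(C_0)| \geq 4r$. Writing $V_i \coloneqq \{v \in V(G) : \mathrm{dist}_G(v, C_0) = i\}$, I would then carry out the classical \emph{concentric cycles} argument to extract, for each $i \in \{0, 1, \ldots, r-1\}$, a cycle $C_i \subseteq G$ homotopic to $C_0$ in $\Sigma$ that lies essentially in the $i$-th BFS layer. The key point is that the minimality of $C_0$, combined with a surface cut-and-paste, rules out any BFS shortcut that would produce a shorter non-contractible cycle; this forces the $C_i$'s to exist and, by minimality again, to satisfy $|V(C_i)| \geq |V(C_0)| \geq 4r$.

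Next, I would pick $r$ vertices $x_1, \ldots, x_r$ evenly spaced along $C_0$, so that consecutive ones are separated by a $C_0$-arc of length at least $\lfloor 4r/r \rfloor = 4$. The BFS structure provides, from each $x_j$, a radial path $P_j$ of length at most $r-1$ reaching $C_{r-1}$; the spacing of $4$ is then enough room to reroute the $P_j$'s into pairwise internally disjoint paths, each meeting every $C_i$ in a single vertex. Appropriate contractions inside $C_0 \cup \cdots \cup C_{r-1} \cup P_1 \cup \cdots \cup P_r$ then produce an $r$-grid as a minor of $G$.

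The main obstacle will be the case in which $C_0$ is one-sided in $\Sigma$, so that a regular neighborhood of $C_0$ is a M\"obius band rather than an annulus. In this case the first genuine concentric cycle outside $C_0$ is a \emph{double cover} of $C_0$, of length at least $2|V(C_0)| \geq 8r$, and one must restrict attention to a cylindrical sub-neighborhood sitting inside this double cover in order to extract the $r$ concentric cycles of length at least $4r$ that the grid construction requires; this is precisely where the factor $4$ in the hypothesis $\mathrm{rep}(\psi) \geq 4r$ is spent. A secondary subtlety is that the $C_i$'s need not be vertex-disjoint from one another, so the radial paths must be chosen with care and rerouted using planarity of the annular neighborhood to ensure internal disjointness, but this step is routine and does not affect the constants.
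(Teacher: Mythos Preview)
The paper does not provide its own proof of this proposition; it is quoted as Lemma~3.3 from \cite{DemaineFHT2005Subexponential} and used as a black box throughout. There is therefore nothing in the present paper to compare your proposal against.

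For what it is worth, your sketch is in the spirit of the classical argument, but several steps would need substantially more care before it constitutes a proof. The claim that BFS layers around $C_0$ each contain a cycle homotopic to $C_0$ is not automatic: a BFS layer need not be connected, let alone contain a non-contractible cycle, and the standard route here goes through a separate lemma (essentially Thomassen's 3-path condition arguments) that turns high representativity into many pairwise disjoint homotopic non-contractible cycles. Similarly, the assertion that spacing the $x_j$ at distance $4$ along $C_0$ suffices to make the radial BFS paths internally disjoint is optimistic: two BFS paths of length $r-1$ starting $4$ apart can merge almost immediately, and one needs either the planarity of the annular neighborhood plus a rerouting argument, or an explicit system of disjoint radial paths coming from the structure of the concentric cycles, to secure this. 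Your remarks about the one-sided case are on point, but that is not where the main gap lies.
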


\begin{proposition}[Demaine, Fomin, Hajiaghayi, and Thilikos \cite{DemaineFHT2005Subexponential} (see Theorem 4.12)]\label{prop_gimmedagridbw}
    Let $r$ be a positive integer and let $G$ be a graph with Euler genus $g$ and $\bw(G) > 4r(g+1)$.
    Then $G$ contains an $r$-grid as a minor.
\end{proposition}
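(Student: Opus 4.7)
I would proceed by induction on the Euler genus $g$ of $G$, using a fixed 2-cell embedding of $G$ in a surface realizing this Euler genus and splitting into cases based on the representativity of that embedding. The base case $g=0$ is the planar setting, where the hypothesis becomes $\bw(G)>4r$; here I would invoke the classical linear grid-minor theorem of Robertson, Seymour, and Thomas for planar graphs to directly extract an $r$-grid minor.

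For the inductive step, I would fix a 2-cell embedding $\psi$ of $G$ in a surface $\Sigma$ of Euler genus exactly $g\ge 1$ (such an embedding exists by the definition of Euler genus). If the representativity of $\psi$ is at least $4r$, then, since $\Sigma$ has positive genus, \cref{prop_gimmedagridfacewidth} yields an $r$-grid minor of $G$ directly. Otherwise, the representativity witnesses a non-contractible curve $C$ in $\Sigma$ intersecting a set $X\subseteq V(G)$ with $|X|\le 4r-1$. By \cref{lem_branchdecompositionviacutset} we have $\bw(G-X)\ge \bw(G)-|X|$, so
\[
\bw(G-X)\;>\;4r(g+1)-4r\;=\;4rg.
\]
Cutting $\Sigma$ along $C$ and capping the resulting holes with disks produces a (possibly disconnected) surface $\Sigma'$ whose relevant component has Euler genus strictly less than $g$, as noted in \cref{sec_prelims}. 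Restricting $\psi$ to $G-X$ yields an embedding of $G-X$ in that component, so $\eg(G-X)\le g-1$. Since $\bw(G-X)>4rg=4r((g-1)+1)$, the inductive hypothesis applied to $G-X$ supplies an $r$-grid minor of $G-X$, which is also an $r$-grid minor of $G$.

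The main obstacle is the topological bookkeeping of the cutting step: verifying in all three sub-cases (one-sided curve, two-sided non-separating, two-sided separating) that the Euler genus of the component of $\Sigma'$ containing the image of $G-X$ drops by at least one. The preliminaries record the separating sub-case explicitly, and the other two sub-cases follow from the standard behaviour of Euler genus under cutting along a non-contractible curve; one must simply select the correct component after a separating cut to ensure the inductive hypothesis applies. Beyond this topological step, the argument is a clean combination of already-established building blocks: representativity control via \cref{prop_gimmedagridfacewidth}, apex-removal via \cref{lem_branchdecompositionviacutset}, and the planar base case via Robertson–Seymour–Thomas.
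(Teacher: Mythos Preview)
Your proposal is correct and follows the standard genus-reduction argument that the cited reference uses; the paper does not reprove this proposition but employs the identical cutting-along-short-noncontractible-curves technique in its proof of \cref{thm_boundedgenusrats}. The only point worth tightening is the base case: rather than appealing loosely to Robertson--Seymour--Thomas, you should cite the explicit planar branchwidth bound (a planar graph with no $r$-grid minor has branchwidth at most $4r$), so that the constant $4r(g+1)$ closes exactly at $g=0$.
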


The proof of the main theorem of this section features one of the main tools that is used throughout this paper.
Namely, to reach graphs on which we can either confirm that the branchwidth of $G$ is too high or which we can find branch-decompositions for, we simply cut apart the embedded graph via short non-contractible curves.
This technique will be expanded on later, where we will need to employ a few more tricks to make it work, but the core idea remains the same.

\begin{theorem}\label{thm_boundedgenusrats}
    Let $k,g$ be integers and let $G$ be a graph with a 2-cell-embedding $\psi$ in a surface $\Sigma$ of genus $g$.
    There exists an algorithm that determines whether $G$ has branchwidth at least $k$ or branchwidth at most $g(k-1) + k$ in time $\Ocal(g^2k(n+m)+n^2)$.
    
    In particular, in the second outcome, the algorithm returns a branch-decomposition of $G$ with width at most $g(k-1) + k$ if we allow for a $\Ocal(g^2k(n+m)+n^3)$-runtime.
\end{theorem}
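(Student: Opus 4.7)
The plan is to induct on the Euler genus $g$ of $\Sigma$, reducing to the planar case which we handle via the Seymour--Thomas Ratcatcher. For the base case $g = 0$, I would run \Cref{prop_catchrat} with parameter $k$: this decides $\bw(G) \le k$ in $\Ocal(n^2)$ time, which exactly matches the target width $g(k-1)+k = k$. For the construction variant I would instead invoke \Cref{prop_catchratbranchdecomp} to build an optimal planar branch-decomposition in $\Ocal(n^3)$ time; in either case, if the planar test fails then $\bw(G) \ge k+1 \ge k$ and we output the ``$\bw \ge k$'' verdict.

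For the inductive step $g \ge 1$, I would first test whether the representativity of $\psi$ is at least $k$ using the linear-time face-width check from \cite{CabelloCL2012Algorithms}. If so, \Cref{prop_highrepresentativitygivestangle} gives a tangle of order $k$ and \Cref{prop_tanglebranchwidthduality} then yields $\bw(G) \ge k$ directly. Otherwise, I would invoke \Cref{prop_findnoncontrcurve} to extract, in time $\Ocal((g+h)k(n+m))$ with $h = \Ocal(g)$ bounding the holes accumulated from earlier cuts, a non-contractible curve $C$ meeting a set $X$ of at most $k-1$ vertices of $G$. Cutting $\Sigma$ along $C$ and capping off the new boundary components with disks produces a closed surface (or two, in the separating case), each of Euler genus strictly less than $g$, into which $G' \coloneqq G - X$ is 2-cell-embedded. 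I would then recurse on $(G', \Sigma')$, treating the two pieces independently when $C$ is separating.

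Combining is straightforward. If any recursive call reports ``$\bw \ge k$'' on $G'$, then since $G'$ is a subgraph of $G$ and branchwidth is minor-monotone we conclude $\bw(G) \ge \bw(G') \ge k$. Otherwise the recursion returns a branch-decomposition of $G'$ of width at most $(g-1)(k-1) + k$, and \Cref{lem_branchdecompositionviacutset} applied with apex set $X$ of size at most $k-1$ lifts this to a branch-decomposition of $G$ of width at most $(g-1)(k-1) + k + (k-1) = g(k-1) + k$ in $\Ocal(km)$ time. Summing across the at most $g$ recursion levels, curve finding dominates at $\Ocal(g^2 k (n+m))$, while the base case contributes $\Ocal(n^2)$ for decision or $\Ocal(n^3)$ for construction, matching the claimed runtimes.

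The principal difficulty I anticipate is the topological bookkeeping in the separating case: I must verify that both components of $\Sigma$ cut along $C$ have strictly smaller Euler genus (so the induction is well-founded), that the total number of accumulated holes stays $\Ocal(g)$ (so the per-level cost of \Cref{prop_findnoncontrcurve} remains $\Ocal(gk(n+m))$), and that capping off holes preserves the 2-cell property so that representativity can be re-tested at the next level. A secondary point worth noting is that I use \Cref{prop_highrepresentativitygivestangle} rather than \Cref{prop_gimmedagridfacewidth} to certify $\bw(G) \ge k$: this keeps the constant between representativity and branchwidth at $1$ rather than $4$, which is what makes the tight bound $g(k-1)+k$ achievable.
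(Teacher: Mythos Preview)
Your proposal is correct and follows essentially the same approach as the paper: iteratively cut along a shortest non-contractible curve (certifying $\bw\ge k$ via \Cref{prop_highrepresentativitygivestangle} and \Cref{prop_tanglebranchwidthduality} whenever the representativity reaches $k$), delete the at most $k-1$ intersected vertices, and recurse on a surface of strictly smaller genus until the planar case is reached and handled by the Ratcatcher, then reinsert the at most $g(k-1)$ deleted vertices via \Cref{lem_branchdecompositionviacutset}. The only notable procedural difference is that the paper, rather than capping holes and worrying about whether the 2-cell property survives, simply passes to the blocks of $G-X$ after each cut (each block inherits a 2-cell embedding in a surface of the reduced genus); this sidesteps exactly the bookkeeping issue you flagged as your principal difficulty and is the cleanest way to resolve it.
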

\begin{proof}
    We may assume that $\Sigma$ is not the sphere, as we can otherwise simply apply \zcref{prop_catchrat} or \zcref{prop_catchratbranchdecomp}, depending on whether we want a branch-decomposition or not.
    First, we apply \zcref{prop_findnoncontrcurve} to find a non-contractible curve $\gamma$ witnessing the representativity of $\psi$, which only intersects $\psi$ in the vertices of $G$.
    If $\gamma$ intersects at least $k$ vertices, then $\psi$ has representativity at least $k$ and thus, according to \zcref{prop_highrepresentativitygivestangle}, $G$ contains a tangle of order $k$, which further implies that $G$ has branchwidth at least $k$, due to \zcref{prop_tanglebranchwidthduality}.
    Thus we may instead assume that $\gamma$ intersects less than $k$ vertices of $G$.

    Let $V(\gamma)$ be the vertices that $\gamma$ intersects.
    If we delete $V(\gamma)$ from $G$, this results in two subgraphs $G_1,G_2$ of $G$, with $G_1 \cup G_2 = G - V(\gamma)$, each embedded in one of the two components $\Sigma_1, \Sigma_2$ of $\Sigma - \gamma$.
    Since $\gamma$ is non-contractible, the sum of the genus of $\Sigma_1$ and the genus of $\Sigma_2$ is less than the genus of $\Sigma$.
    For both $i \in [2]$, each block of $G_i$ has a 2-cell-embedding in $\Sigma_i$ induced by $\psi$.

    We may now repeat this procedure on $G_1$ and $G_2$, which will terminate as each iteration reduces the genus of the surfaces involved.
    If we find a subgraph of $G$ embedded on a surface of positive genus with representativity at least $k$, we known that the branchwidth of $G$ is at least $k$ according to \zcref{prop_highrepresentativitygivestangle,prop_tanglebranchwidthduality}.
    Thus we may assume that, after deleting at most $g(k - 1)$ vertices of $G$, we have arrived at a subgraph $G' \subseteq G$ with $|V(G')| + g(k-1) \leq |V(G)|$ such that each block of $G'$ is planar.
    This allows us to use \zcref{prop_catchrat} on each block to either confirm that it, and thus $G$, has branchwidth at least $k$ in $\Ocal(|V(G)|^2)$-time, or even find a branch-decomposition of the block with width at most $k$ in $\Ocal(|V(G)|^3)$-time via \zcref{prop_catchratbranchdecomp}.
    To extend this to branch-decomposition of $G$ with width at most $g(k-1) + k$ we simply apply \zcref{lem_branchdecompositionviacutset}.
\end{proof}

\zcref{prop_findembedding} and \zcref{thm_boundedgenusrats} allow us to approximate the branchwidth of a bounded genus graph.

\begin{corollary}\label{cor_boundedgenusfindbw}
    Let $k,g$ be integers and let $G$ be a graph of Euler genus at most $g$.
    There exists a computable function $f \colon \mathbb{N} \to \mathbb{N}$ and an algorithm that determines whether $G$ has branchwidth at least $k$ or branchwidth at most $g(k-1) + k$ in time $\Ocal(f(g)k(n+m)+n^2)$.
    
    In particular, in the second outcome, the algorithm returns a branch-decomposition of $G$ with width at most $g(k-1) + k$ if we allow for a $\Ocal(f(g)k(n+m)+n^3)$-runtime.
\end{corollary}

Via binary search, this further allows us to approximate the branchwidth of our graph of bounded genus.
Note that we only need to construct the branch-decomposition once we are actually sure that the branchwidth of our graph falls below our desired bound, which means that we only need to compute the branch-decomposition in the last instance of our search process.

\begin{corollary}\label{cor_boundedgenusfindbd}
    Let $G$ be a graph of Euler genus at most $g$.
    There exists a computable function $f \colon \mathbb{N} \to \mathbb{N}$ and an algorithm that outputs an integer $b$ such that $b \leq \bw(G) \leq g(b-1) + b$ in time $\Ocal((f(g)(n+m)+n^2) \log n)$ and furthermore outputs a branch-decomposition of width at most $g(b-1)+b$ in time $\Ocal((f(g)(n+m)) \log n + n^3)$.
\end{corollary}

\subsection{Constructing a grid}\label{subsec:findgridbdgenus}
In our algorithms a possible answer is that the branchwidth of the input graph is lower bounded by some (approximate) value.
In this section, we explain how this lower bound can be (approximately) certified by some minor-model of a grid, in polynomial time.
For this purpose, we employ some known results powerful enough to do most of the work for us.

Let $G$ be a graph with a 2-cell-embedding of $G$ in a surface $\Sigma$ of Euler genus $g$.
The \emph{radial} graph $R_{G}$ of $G$  is a bipartite graph whose vertices are the vertices and the faces of $G$ and where adjacency expresses incidence between the corresponding faces and vertices.
The definition of $R_{G}$ naturally defines an embedding of $R_{G}$ in $\Sigma$. 
A path in $R_{G}$ defines a \emph{radial path} in $G$ between the corresponding vertices or faces.
Accordingly, the \emph{distance} in $R_{G}$ between two vertices defines the \emph{radial distance} in $G$ between the corresponding vertices or faces.
Given a subgraph $H$ of the radial graph $R_{G}$, we denote by $N(H)$ the set of all vertices of $R_{G}$ whose distance from some vertex of $V(G)\cap V(H)$ is at most two.   
Also we denote by $E_{H}(G)$ the set of all edges of $G$ where both their endpoints belong in $N(H)$.
We denote by $G/E_{H}(G)$ the graph obtained by $G$ if we contract all edges of $G$.
We make use of the following result from \cite{GolovachKST25FindingARXIV,GolovachKST25Finding} that is, in turn, based on the main result of \cite{DemaineHK11Contraction} and the algorithmic results of \cite{CabelloCL2012Algorithms}.

\begin{proposition}[Golovach, Kolliopoulos, Stamoulis, and Thilikos \mbox{\cite[Lemma 7.5]{GolovachKST25FindingARXIV}}]
\label{lem_trans_rad}
    There is an algorithm that, given a  2-cell-embedding of a graph $G$ in a surface $\Sigma$ of Euler genus $g$ outputs, in linear time, a subgraph $H$ of the radial graph of $G$ such that the following hold:
\begin{itemize}
    \item $V(H)$ is the union of the vertex sets of a collection $\Pcal$ of $\mathcal{O}(g)$ many shortest radial paths in $G$,
	\item $E_{H}(G)$ is connected in $G$,
	\item $G' \coloneqq G/E_{H}(G)$ is a planar graph, and
	\item $\tw(G)=\mathcal{O}(\eg(G)^{\nicefrac{5}{2}} \cdot \tw(G'))$.
\end{itemize}
\end{proposition}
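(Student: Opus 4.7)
The plan is to build $H$ by iteratively extracting a system of shortest non-contractible cycles in the radial graph $R_G$ and cutting the surface along them. The $2$-cell embedding of $G$ in $\Sigma$ induces a $2$-cell embedding of $R_G$ in $\Sigma$, so the algorithm of Cabello, Colin de Verdi\`ere, and Lazarus (\Cref{prop_findnoncontrcurve}) can be applied to $R_G$ to produce, in linear time, a shortest non-contractible curve drawn along the radial graph; this corresponds to a shortest non-contractible closed walk $C_1$ in $R_G$. Cutting $\Sigma$ along $C_1$ strictly reduces its Euler genus. Iterating $\Ocal(g)$ times on the result of each previous cut yields closed walks $C_1,\ldots,C_s$ with $s\in\Ocal(g)$ whose combined cut flattens $\Sigma$ into a sphere with holes (which we cap off). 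Opening each $C_i$ at a single vertex gives a shortest radial path $P_i$, and we set $\Pcal\coloneqq\{P_1,\ldots,P_s\}$ and $H\coloneqq\bigcup_{i}P_i$.

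Next, I would enforce the technical requirements that $E_H(G)$ be connected in $G$ and that $G/E_H(G)$ be planar. By construction the cut system makes $\Sigma$ planar once one cuts along $C_1,\ldots,C_s$, so it suffices to argue that contracting $E_H(G)$ in $G$ realizes the same topological effect. Since $N(H)$ consists of all vertices and faces of $R_G$ within radial distance $2$ of $V(G)\cap V(H)$, the set $E_H(G)$ contains, around each $P_i$, a full strip of edges of $G$ that are ``dual-adjacent'' to the curve along $\Sigma$; contracting this strip collapses the corresponding cycle to a single vertex of $G'$ and its tubular neighborhood on the surface to a planar region, so $G'$ is planar. To guarantee the connectedness of $E_H(G)$ in $G$, I would augment $\Pcal$ with $\Ocal(g)$ additional shortest radial paths joining consecutive $P_i$'s; each such path is computed by a single BFS in $R_G$, preserving both the $\Ocal(g)$ bound on $|\Pcal|$ and the overall linear running time.

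Finally, the treewidth inequality is obtained by invoking the contraction-decomposition theorem of Demaine, Hajiaghayi, and Kawarabayashi~\cite{DemaineHK11Contraction}, which guarantees that if a bounded-genus graph is contracted along a connected subgraph of bounded radial radius, the treewidth can only drop by a factor of $\Ocal(\eg(G)^{5/2})$. Our $E_H(G)$ is precisely such an object — a union of $\Ocal(g)$ shortest radial paths thickened by radial distance $2$ — so the bound $\tw(G)=\Ocal(\eg(G)^{5/2}\cdot\tw(G'))$ follows directly. The main obstacle is matching the algorithmic datum (a subgraph of $R_G$ whose associated thickening in $G$ is connected and produces a planar contraction) with the hypotheses of the DHK contraction theorem; this is precisely the role of the ``radial distance $2$'' thickening, which bridges the combinatorial structure of the cut system inside $R_G$ with the topological meaning of cutting $\Sigma$ along the corresponding curves. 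Once this bridge is in place, everything else — linear runtime, the $\Ocal(g)$ bound on the number of paths, planarity, and the treewidth inequality — follows from the stated external results.
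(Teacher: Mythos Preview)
The paper does not prove this proposition; it is quoted verbatim from \cite{GolovachKST25FindingARXIV} and used as a black box. The only hint the paper gives is the sentence preceding the statement: the result ``is, in turn, based on the main result of \cite{DemaineHK11Contraction} and the algorithmic results of \cite{CabelloCL2012Algorithms}.'' Your plan uses precisely these two ingredients, so at the level of ``which external tools drive the argument'' you are aligned with what the paper indicates.

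That said, two points in your sketch would not survive as written. First, opening a shortest non-contractible cycle $C_i$ at a vertex does \emph{not} produce a shortest radial path between the two resulting endpoints; it produces a path whose length equals $|C_i|-1$, which need bear no relation to the distance between those endpoints in $R_G$. The statement explicitly demands that $\Pcal$ consist of shortest radial paths, so either you must argue that your paths are shortest (they are not, in general), or you need a different construction --- the standard route here is a BFS tree in $R_G$ together with a system of $\Ocal(g)$ fundamental cycles, each of which decomposes into two genuine shortest paths from the BFS root. Second, your invocation of \cite{DemaineHK11Contraction} is imprecise: that paper gives a contraction-decomposition into layers of bounded treewidth, not a statement of the form ``contracting a connected subgraph of bounded radial radius loses at most a factor $\eg(G)^{5/2}$ in treewidth.'' The $\nicefrac{5}{2}$ exponent in the proposition is not a direct consequence of any single theorem in \cite{DemaineHK11Contraction}; obtaining it requires additional work carried out in \cite{GolovachKST25FindingARXIV}, which you would have to reproduce rather than cite.
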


We also need the following result by Gu and  Tamaki in \cite{GuT11Constant}.
\begin{proposition}[Gu and Tamaki \cite{GuT11Constant}]
\label{prop_aprox_alg}
For every integer $c ≥ 1$ and every $b > 2c + \nicefrac{3}{2}$, there is an algorithm that, given a planar graph $G$,  constructs the minor model $\varphi \colon V(\Gamma_{k}) \rightarrow 2^{V(G)}$ of a $k$-grid $\Gamma_{k}$
with $k ≥ \bg(G)/b$ in  $\mathcal{O}(n^{1+ \frac{1}{c}}\cdot  \log n)$ time.
\end{proposition}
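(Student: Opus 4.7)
The plan is to reduce the problem to computing an approximate branch-decomposition of $G$ and then constructively extracting a grid-minor model whose size is proportional to the resulting branchwidth estimate, exploiting the linear relationship between branchwidth and largest grid-minor size in planar graphs. Two ingredients make this possible: (i) for a planar graph one has $\bw(G) = \Theta(\bg(G))$ with a small explicit constant (e.g.\ $\bw(G) \leq 3\cdot\bg(G)$, as shown by Gu--Tamaki themselves in the structural part of the paper), so an approximate lower bound on $\bw(G)$ already certifies a proportional grid minor; and (ii) the Ratcatcher machinery of \Cref{prop_catchrat} and \Cref{prop_catchratbranchdecomp} gives polynomial-time access to branchwidth in the planar setting.

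First I would compute a fast approximate branch-decomposition of $G$ by a divide-and-conquer scheme based on planar separators. A balanced Lipton--Tarjan separator of size $\Ocal(\sqrt{n})$ splits $G$ into two halves, and recursing for $c$ levels produces pieces of size roughly $n^{\nicefrac{1}{c}}$. Running the Ratcatcher exactly on each leaf piece, and then greedily merging the resulting branch-decompositions along the separators used at each level, yields in total time $\Ocal(n^{1+\nicefrac{1}{c}}\log n)$ a branch-decomposition of $G$ whose width is within a slack of optimal controlled by the aggregate separator cost across the recursion. Expressed as a function of $\bg(G)$ via $\bw(G) \leq 3\cdot\bg(G)$, this accumulated slack converts into the multiplicative tradeoff $b > 2c + \nicefrac{3}{2}$, where the $2c$ tracks the $c$ recursion levels (each contributing an additive factor of $2$ in the worst case when a separator cuts a putative grid) and the $\nicefrac{3}{2}$ absorbs the base-case loss of the Ratcatcher-produced decomposition.

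Second, I would convert the approximate branch-decomposition into an explicit grid minor. Taking the tangle $\mathcal{T}$ dual to the decomposition (via \Cref{prop_tanglebranchwidthduality}), the planar embedding of $G$ allows me to produce a well-linked system of concentric cycles and transversal paths centred on a face selected by $\mathcal{T}$: the tangle picks a ``centre'' face consistently, radial shortest paths from this face carve the embedding into sectors, and the tangle axioms guarantee that $\Omega(k)$ of these sectors are mutually crossing in a grid-like fashion. Contracting edges along the radial paths and along segments of the concentric cycles yields a model $\varphi\colon V(\Gamma_{k})\to 2^{V(G)}$ of $\Gamma_{k}$ with $k \geq \bg(G)/b$, in time linear in the size of the current piece.

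The main obstacle lies in the interplay between the recursive separator-based branchwidth estimation and the subsequent grid extraction. Each recursive merge introduces an additive loss proportional to the size of the separator used, and these losses must be charged against $\bw(G)$ rather than against $n$ so as not to spoil the constant-factor approximation; keeping this bookkeeping tight---so that the final factor is exactly $2c + \nicefrac{3}{2}$---requires choosing the separators to respect the existing branch-decompositions of the sub-pieces and aligning them with radial paths in the planar embedding, so that the grid extraction step can proceed locally on each piece and the concentric cycles surviving a merge are not short-circuited by the separator. This alignment, and the verification that the time bound $\Ocal(n^{1+\nicefrac{1}{c}}\log n)$ survives the merging step, is where essentially all the technical care is concentrated.
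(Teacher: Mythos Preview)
The paper does not prove this proposition at all: it is quoted verbatim from Gu and Tamaki \cite{GuT11Constant} and used as a black box (see the sentence ``We also need the following result by Gu and Tamaki in \cite{GuT11Constant}'' immediately preceding it). There is therefore no proof in the paper to compare your proposal against.

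As for the proposal itself as a sketch of the Gu--Tamaki argument, the overall shape is in the right spirit but there is a genuine gap in the second step. A tangle of order $k$ in a planar graph does certify a grid minor of order $\Omega(k)$, but the construction you describe (``radial shortest paths from a centre face'', ``concentric cycles'', ``tangle axioms guarantee $\Omega(k)$ sectors are mutually crossing'') is not a proof: the tangle axioms by themselves do not hand you concentric cycles or a well-linked path system, and turning a tangle into an explicit grid model with the tight constant you need is exactly the hard part. Gu and Tamaki do not go through tangles; they work directly with their notion of cylinder minors and prove a tight structural inequality relating branchwidth to the largest cylinder, from which the grid and the constant $2c+\nicefrac{3}{2}$ fall out. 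Your accounting for the constant (``$2c$ tracks the $c$ recursion levels, $\nicefrac{3}{2}$ absorbs the base-case loss'') is also post-hoc rather than derived, and would not survive a careful analysis as written.
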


Combined these results allow us to show that we can in fact find a grid minor model in a graph embedded on a surface of bounded genus if the treewidth of the graph is high enough.

\begin{lemma}\label{lem_make_con_genus}
There exist a constant $c_{\ref{lem_make_con_genus}}$ and an algorithm running in $\Ocal(n^2)$-time that, given a 2-cell-embedding of a graph $G$ in a surface $\Sigma$ of Euler genus $g$, where $\tw(G) \geq c_{\ref{lem_make_con_genus}} \cdot g^{\nicefrac{5}{2}} \cdot k$, return a minor model $\varphi$ of a $k$-grid $\Gamma_{k}$ such that $\bigcup_{v\in \Gamma_{k}}G[\varphi(v)]$ is drawn in some disk within $\Sigma$.
\end{lemma}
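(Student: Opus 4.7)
The plan is to reduce to a planar graph via \Cref{lem_trans_rad}, locate a large grid minor in that reduction using \Cref{prop_aprox_alg}, and then carefully lift the minor back to $G$ so that the union of branch sets is drawn inside a disk of $\Sigma$.

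First, I would apply \Cref{lem_trans_rad} in linear time to obtain a subgraph $H$ of the radial graph of $G$ together with the connected edge set $E_{H}(G)$ such that $G' \coloneqq G/E_{H}(G)$ is planar and $\tw(G) = \Ocal(g^{\nicefrac{5}{2}} \cdot \tw(G'))$. Choosing $c_{\ref{lem_make_con_genus}}$ sufficiently large, the hypothesis $\tw(G) \geq c_{\ref{lem_make_con_genus}} \cdot g^{\nicefrac{5}{2}} \cdot k$ gives $\tw(G') \geq \alpha k$ for any constant $\alpha$ we prescribe. By the planar Grid Theorem of Robertson, Seymour, and Thomas, $\bg(G') = \Omega(\tw(G'))$, so for $\alpha$ large enough this yields $\bg(G') \geq 6(2k+2)$. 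Applying \Cref{prop_aprox_alg} to the planar $G'$ with $c = 2$ and $b = 6$ then produces, in $\Ocal(n^{\nicefrac{3}{2}} \log n) \subseteq \Ocal(n^2)$ time, a minor model $\varphi'$ of a grid $\Gamma_{k''}$ in $G'$ with $k'' \geq 2k+2$.

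Next, since $E_{H}(G)$ is connected, contracting all of its edges identifies the endpoints into a single vertex $v^{\star} \in V(G')$. At most one branch set of $\varphi'$ contains $v^{\star}$; say it is the branch set of the grid-vertex $(i^{\star},j^{\star})$ of $\Gamma_{k''}$. Deleting row $i^{\star}$ and column $j^{\star}$ from $\Gamma_{k''}$ leaves four rectangular sub-grids, the largest of which has both sides at least $\lceil (k''-1)/2 \rceil \geq k$. Restricting $\varphi'$ to the vertex set of this $k \times k$ sub-grid gives a model $\varphi$ of $\Gamma_k$ whose branch sets all lie in $V(G') \setminus \{v^{\star}\} = V(G) \setminus V(E_{H}(G))$. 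Since every edge of $G'$ with neither endpoint equal to $v^{\star}$ is an honest edge of $G$ outside $E_{H}(G)$, the chosen branch sets remain connected in $G$ and the inter-adjacencies are realised by actual edges of $G$, so $\varphi$ is a valid minor model of $\Gamma_k$ in $G$.

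Finally, set $F \coloneqq V(E_{H}(G)) \cup E_{H}(G)$ and observe that $\bigcup_{v \in V(\Gamma_k)} G[\varphi(v)]$ is a connected subgraph of $G$ whose $\psi$-image lies in $\Sigma \setminus \psi(F)$, hence in a single connected component $C_{\star}$ of that complement. The planarity of $G' = G/F$ forces any non-planar topology of $\Sigma$ to be absorbed by the face of the induced embedding of $G'$ that is incident to $v^{\star}$; equivalently, any component of $\Sigma \setminus \psi(F)$ that carries a non-$v^{\star}$ vertex of $G'$ is an open disk. Thus $C_{\star}$ is an open disk of $\Sigma$, as required. The main obstacle is precisely this last topological step: confirming that $C_{\star}$ is a genuine disk rather than a higher-genus subsurface requires combining the planarity of $G/E_{H}(G)$ with the fact that $H$ is a union of shortest radial paths that collectively cut $\Sigma$ open into a planar region. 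The remaining combinatorics (trimming the grid around $v^{\star}$, lifting the branch sets across the contraction) and the $\Ocal(n^2)$ runtime bookkeeping are routine.
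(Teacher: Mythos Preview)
Your approach mirrors the paper's: apply \Cref{lem_trans_rad}, run \Cref{prop_aprox_alg} on the planar contraction $G'$, and trim the resulting grid so that none of its branch sets contains the contracted vertex. The paper takes a $4k$-grid and keeps one of its four $k$-quadrants whose disk avoids $x$; you take a $(2k{+}2)$-grid and delete the row and column through $v^{\star}$. These are equivalent bookkeeping tricks.

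The real difference is the disk step. Your proposed justification---that combinatorial planarity of $G' = G/F$ forces every component of $\Sigma \setminus \psi(F)$ carrying a non-$v^{\star}$ vertex to be a disk---does not hold as stated. Planarity of a contraction is a statement about the abstract graph $G'$, not about how $G\setminus F$ sits inside $\Sigma$; a planar subgraph of $G$ can perfectly well be drawn across a handle. You flag this as the main obstacle but do not resolve it. The paper sidesteps this by working in the \emph{planar embedding} of $G'$: there the four $k$-grid models occupy four pairwise disjoint disks of the sphere, and since un-contracting $E_H(G)$ affects only a neighbourhood of $x$, the $x$-avoiding disk $\Delta_i$ persists unchanged as a disk of $\Sigma$. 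That argument implicitly uses that the planar embedding of $G'$ is the one inherited from cutting $\Sigma$ along the radial structure underlying \Cref{lem_trans_rad}, so that regions of the sphere away from $x$ coincide with regions of $\Sigma$ away from the cut locus. Your direct argument in $\Sigma$ would need the same appeal to the construction behind \Cref{lem_trans_rad} to go through.
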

\begin{proof}
Let $c'$ be a constant such that for every planar graph $G$, we have $\tw(G) \leq c' \cdot \bg(G)$ (e.g. we may pick $c'=5$ \cite{Grigoriev11Tree}).
Choose $c_{\ref{lem_make_con_genus}}$  so that 
we may  apply the algorithm  in \zcref{lem_trans_rad}
to construct a planar graph $G' \coloneqq G/E_{H}(G)$
where $\tw(G') \geq 16\cdot c' \cdot k$, which implies $\bg(G') \geq 16k$.
Let $x$ be the vertex of $G'$ that is the result of the contraction of the edges in $E_H(G)$.

Then we apply the algorithm of \zcref{prop_aprox_alg} for $c=10/9$ and $b=4$ and obtain the minor model $\varphi \colon V(\Gamma_{4k}) \rightarrow 2^{V(G')}$ of a $4k$-grid $\Gamma_{4k}$  in  $\mathcal{O}(n^{1.9}\cdot  \log n)=\mathcal{O}(n^2)$ time. 
Notice that $\varphi$ is also a minor model of the union of $4$
disjoint copies of the  $k$-grid $\Gamma_{k}$.
This gives rise to four minor-models $\varphi_{1},\ldots,\varphi_{4}$
of $\Gamma_{k}$ whose unions of images are pairwise disjoint.
Moreover, in the planar embedding of $G'$ we may further assume, for each $\varphi_{i}$, with $i\in[4]$, that
all images of $\varphi_{i}$ can be drawn in a disk $\Delta_{i}$
such that the disks $\Delta_{1},\ldots,\Delta_{4}$ are pairwise disjoint. Let $\Delta_{i}$ be one of these disks where $x$ is not drawn. Notice now that $\Delta_{i}$ can be seen as a disk of the surface $\Sigma$ where $G$ is drawn because un-contracting the edges of $E_{H}(G)$ does not affect the  $x$-avoiding disk  $\Delta_{i}$.
Therefore  $\varphi_{i}$ can be returned as the claimed minor model of a $k$-grid in $G$.
\end{proof}

Notice that according to \zcref{prop_aprox_alg}
the running time of \zcref{lem_make_con_genus}
can be pushed arbitrarily close to linear in the cost of worst choices 
of the constant $c_{\ref{lem_make_con_genus}}$.

Finally, note that \zcref{lem_make_con_genus} allows us to refine \zcref{thm_boundedgenusrats,cor_boundedgenusfindbw}, as we can apply this theorem in the proof of \zcref{thm_boundedgenusrats} by demanding higher representativity.
Using \zcref{prop_tanglebranchwidthduality,prop_highrepresentativitygivestangle,prop:twbwequiv} this then gives us the necessary lower bound on the treewidth of our graph to apply \zcref{lem_make_con_genus} to find the desired grid minor model.
However, this comes at the cost of driving up the bounds for the branchwidth in our approximation.

\begin{theorem}
    Let $k,g$ be integers and let $G$ be a graph of Euler genus at most $g$.
    There exists a computable function $f \colon \mathbb{N} \to \mathbb{N}$ and an algorithm that determines whether $G$ has branchwidth at least $k$ or branchwidth at most $\nicefrac{3}{2}g(c_{\ref{lem_make_con_genus}}\cdot g^{\nicefrac{5}{2}} \cdot k) + k$ in time $\Ocal(f(g)k(n+m)+n^2)$.
    
    In particular, in the first outcome, the algorithm returns a $k$-grid-minor model and in the second outcome, the algorithm returns a branch-decomposition of $G$ with width at most $\nicefrac{3}{2}g(c_{\ref{lem_make_con_genus}}\cdot g^{\nicefrac{5}{2}} \cdot k) + k$ if we allow for a $\Ocal(f(g)k(n+m)+n^3)$-runtime.
\end{theorem}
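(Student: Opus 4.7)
The plan is to refine the argument of \Cref{thm_boundedgenusrats} by raising the representativity threshold enough that every high-representativity witness encountered also satisfies the hypothesis of \Cref{lem_make_con_genus}, which then converts into an explicit $k$-grid minor model. First, we invoke \Cref{prop_findembedding} to compute a 2-cell-embedding of $G$ in a surface of genus at most $g$ in time $2^{\poly(g)}\cdot n$, accounting for the $f(g)\cdot n$ factor in the runtime.

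Next, we set the new representativity threshold $T:=c_{\ref{lem_make_con_genus}}\cdot g^{\nicefrac{5}{2}}\cdot k + 1$ and follow the recursive cutting procedure from the proof of \Cref{thm_boundedgenusrats}: iteratively apply \Cref{prop_findnoncontrcurve} to each positive-genus embedded piece $G'$ to find a shortest non-contractible curve $\gamma$. If $\gamma$ passes through at least $T$ vertices, then the restriction of the embedding to $G'$ has representativity at least $T$, so \Cref{prop_highrepresentativitygivestangle,prop_tanglebranchwidthduality} yield $\bw(G')\geq T$, and \Cref{prop:twbwequiv} upgrades this to
\[
\tw(G')\;\geq\;T-1\;\geq\;c_{\ref{lem_make_con_genus}}\cdot \eg(G')^{\nicefrac{5}{2}}\cdot k.
\]
We then apply \Cref{lem_make_con_genus} to $G'$ to construct, in $\Ocal(n^2)$ time, an explicit $k$-grid minor model in $G'\subseteq G$, and return it. Otherwise $\gamma$ intersects fewer than $T$ vertices, so we cut along it, deleting fewer than $T$ vertices and strictly decreasing the genus of the containing surface; after at most $g$ cuts the procedure terminates at a planar subgraph $G''$ obtained from $G$ by deleting at most $g(T-1)$ apex vertices.

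On each block of $G''$ we invoke \Cref{prop_catchrat} (or \Cref{prop_catchratbranchdecomp} in the constructive variant) with a suitably chosen threshold. Either every block is confirmed to have branchwidth within this threshold, in which case \Cref{lem_branchwidthofblocks,lem_branchdecompositionviacutset} stitch the block branch-decompositions together and restore the $g(T-1)$ removed apices, yielding a branch-decomposition of $G$ of width at most $\nicefrac{3}{2}\,g\cdot(c_{\ref{lem_make_con_genus}}\cdot g^{\nicefrac{5}{2}}\cdot k)+k$; or some planar block has branchwidth exceeding the threshold, in which case \Cref{prop_aprox_alg} extracts an explicit $k$-grid minor in that block, and hence in $G$.

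The main obstacle is calibrating these two thresholds so that the hypothesis of \Cref{lem_make_con_genus} is satisfied precisely when high representativity is detected in the positive-genus stage, while the additive apex-removal cost $g(T-1)$ together with the planar-stage bound remains compatible with the stated upper bound on the output branchwidth; the slack factor $\nicefrac{3}{2}$ comfortably absorbs the constants arising from the branchwidth/treewidth conversion of \Cref{prop:twbwequiv}.
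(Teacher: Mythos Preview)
Your approach is correct and matches the paper's own (very brief) proof sketch: both raise the representativity threshold in the cutting procedure of \Cref{thm_boundedgenusrats} so that whenever high representativity is detected, the chain \Cref{prop_highrepresentativitygivestangle} $\to$ \Cref{prop_tanglebranchwidthduality} $\to$ \Cref{prop:twbwequiv} yields the treewidth lower bound needed for \Cref{lem_make_con_genus} to produce the $k$-grid model. You are in fact slightly more explicit than the paper in handling the terminal planar case via \Cref{prop_aprox_alg}, which the paper's sketch leaves implicit; the $\nicefrac{3}{2}$ slack you identify is indeed what absorbs the resulting extra additive constant.
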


\section{Graph Minor Structure Theory}\label{sec_gmst}
As we get towards the more involved part of our article, we dive into the details of the structure theory introduced by Robertson and Seymour.
Our goal will be to reprove the Graph Minor Structure Theorem (GMST), which first appears in \cite{RobertsonS2003Grapha}, to ensure an additional helpful property.
In particular, we desire more information on how exactly each part of the decomposition it provides us can be ``painted'' onto a surface.
This part mainly requires us to examine the existing proof of this theorem carefully and add one ingredient from \cite{Grohe2016Quasi4Connected}.
We note that we only need to reprove the very last step of the GMST, as we can directly make use of the Local Structure Theorem.
Thus, aside from having to introduce several definitions, we do not have to invest a lot of work to accomplish our goal.

\subsection{Underlying definitions of graph minor structure theory}
The definitions we present are inspired by the work of \cite{KawarabayashiTW2018New,KawarabayashiTW2021Quickly,GorskySW2025Polynomial}.
Of course, most of them are in some way descendants of the work of Robertson and Seymour.

\paragraph{Meshes.}
Let $A, B \subseteq V(G)$, an \emph{$A$-$B$-path} is a path $P$ that has one endpoint in $A$, the other in $B$, and $V(P) \cap (A \cup B)$ only contains the endpoints of $P$.
Let $n,m$ be integers with $n,m\geq 2$.
A \emph{$(n\times m)$-mesh} is a graph $M$ which is the union of paths $M=P_1\cup\cdots\cup P_n\cup Q_1\cup \cdots \cup Q_m$ where
    \begin{itemize}
        \item $P_1,\cdots,P_n$ are pairwise vertex-disjoint, and $Q_1,\cdots,Q_m$ are pairwise vertex-disjoint.
        \item for every $i\in [n]$ and $j\in [m]$, the intersection $P_i\cap Q_j$ induces a path,
        \item each $P_i$ is a  $V(Q_1)$-$V(Q_m)$-path intersecting the paths $Q_1,\cdots Q_m$ in the given order, and each $Q_j$ is a $V(P_1)$-$V(P_m)$-path intersecting the paths $P_1,\cdots, P_h$ in the given order. 
    \end{itemize}
We say that the paths $P_1,\cdots,P_n$ are the \emph{horizontal paths}, and the paths $Q_1,\cdots,Q_m$ are the \emph{vertical paths}.
A mesh $M'$ is a \emph{submesh} of a mesh $M$ if every horizontal (vertical) paths of $M'$ is a subpath of a horizontal (vertical) paths $M$, respectively.
We write \emph{$n$-mesh} as a shorthand for an $(n \times n)$-mesh.

\paragraph{More on tangles.}
Let $r \in \mathbb{N}$ with $r\geq 3$, let $G$ be a graph, and $M$ be an $r$-mesh in $G$.
Let $\mathcal{T}_M$ be the orientation of $\mathcal{S}_r$ such that for every $(A,B)\in\mathcal{T}_M$, the set $B\setminus A$ contains the vertex set of both a horizontal and a vertical path of $M$, we call $B$ the \emph{$M$-majority side} of $(A,B)$.

\paragraph{Paintings in surfaces.}
A \emph{painting} in a surface $\Sigma$ is a pair $\Gamma = (U,N)$, where $N \subseteq U \subseteq \Sigma$, $N$ is finite, $U \setminus N$ has a finite number of arcwise-connected components, called \emph{cells} of $\Gamma$, and for every cell $c$, the closure $\overline{c}$ is a closed disk where $N_\Gamma(c) \coloneqq \overline{c} \cap N \subseteq \mathsf{bd}(\overline{c})$.
If $|N_\Gamma(c)| \geq 4$, the cell $c$ is called a \emph{vortex}.
We further let $N(\Gamma) \coloneqq N$, let $U(\Gamma) \coloneqq U$, and let $C(\Gamma)$ be the set of all cells of $\Gamma$.
\medskip

Any given painting $\Gamma = (U,N)$ defines a hypergraph with $N$ as its vertices and the set of closures of the cells of $\Gamma$ as its edges.
Accordingly, we call $N$ the \emph{nodes} of $\Gamma$.

\paragraph{$\Sigma$-renditions.}
Let $G$ be a graph and $\Sigma$ be a surface.
A \emph{$\Sigma$-rendition} of $G$ is a triple $\rho = (\Gamma, \sigma, \pi)$, where
\begin{itemize}
    \item $\Gamma$ is a painting in $\Sigma$,
    \item for each cell $c \in C(\Gamma)$, $\sigma(c)$ is a subgraph of $G$, and
    \item $\pi \colon N(\Gamma) \to V(G)$ is an injection,
\end{itemize}
such that
\begin{description}
    \item[R1] $G = \bigcup_{c \in C(\Gamma)}\sigma(c)$,
    \item[R2] for all distinct $c,c' \in C(\Gamma)$, the graphs $\sigma(c)$ and $\sigma(c')$ are edge-disjoint,
    \item[R3] $\pi(N_\Gamma(c)) \subseteq V(\sigma(c))$ for every cell $c \in C(\Gamma)$, and
    \item[R4] for every cell $c \in C(\Gamma)$, we have $V(\sigma(c) \cap \bigcup_{c' \in C(\Gamma) \setminus \{ c \}} (\sigma(c'))) \subseteq \pi(M_\Gamma(c))$.
\end{description}
We write $N(\rho)$ for the set $N(\Gamma)$, let $N_\rho(c) = N_\Gamma(c)$ for all $c \in C(\Gamma)$, and similarly, we lift the set of cells from $C(\Gamma)$ to $C(\rho)$.
If it is clear from the context which $\rho$ is meant, we will sometimes simply write $N(c)$ instead of $N_\rho(c)$, and if the $\Sigma$-rendition $\rho$ for $G$ is understood from the context, we usually identify the sets $\pi(N(\rho))$ and $N(\rho)$ along $\pi$ for ease of notation.

\paragraph{Societies.}
Let $\Omega$ be a cyclic ordering of the elements of some set which we denote by $V(\Omega)$.
A \emph{society} is a pair $(G,\Omega)$, where $G$ is a graph and $\Omega$ is a cyclic ordering with $V(\Omega)\subseteq V(G)$.
For a given set $S \subseteq V(\Omega)$ a vertex $s \in S$ is an \emph{endpoint} of $S$ if there exists a vertex $t \in V(\Omega) \setminus S$ that immediately precedes or succeeds $s$ in $\Omega$.
We call $S$ a \emph{segment} of $\Omega$ if $S$ has two or less endpoints.

Let $(G,\Omega)$ be a society and let $\Sigma$ be a surface with one boundary component $B$ homeomorphic to the unit circle.
A \emph{rendition} of $(G,\Omega)$ in $\Sigma$ is a $\Sigma$-rendition $\rho$ of $G$ such that the image under $\pi_{\rho}$ of $N(\rho) \cap B$ is $V(\Omega)$ and $\Omega$ is one of the two cyclic orderings of $V(\Omega)$ defined by the way the points of $\pi_{\rho}(V(\Omega))$ are arranged in the boundary $B$.

\paragraph{Traces of paths and cycles.}
Let $\rho$ be a $\Sigma$-rendition of a graph $G$.
For every cell $c \in C(\rho)$ with $|N_\rho(c)| = 2$, we select one of the components of $\mathsf{bd}(c) - N_\rho(c)$.
This selection will be called a \emph{tie-breaker in $\rho$}, and we assume that every rendition comes equipped with a tie-breaker.

Let $G$ be a graph and $\rho$ be a $\Sigma$-rendition of $G$.
Let $Q$ be a cycle or path in $G$ that uses no edge of $\sigma(c)$ for every vortex $c \in C(\rho)$.
We say that $Q$ is \emph{grounded} if it uses edges of $\sigma(c_1)$ and $\sigma(c_2)$ for two distinct cells $c_1, c_2 \in C(\rho)$, or $Q$ is a path with both endpoints in $N(\rho)$.
If $Q$ is grounded we define the \emph{trace} of $Q$ as follows.
Let $P_1,\dots,P_k$ be distinct maximal subpaths of $Q$ such that $P_i$ is a subgraph of $\sigma(c)$ for some cell $c$.
Fix $i \in [k]$.
The maximality of $P_i$ implies that its endpoints are $\pi(n_1)$ and $\pi(n_2)$ for distinct nodes $n_1,n_2 \in N(\rho)$.
If $|N_\rho(c)| = 2$, let $L_i$ be the component of $\mathsf{bd}(c) - \{ n_1,n_2 \}$ selected by the tie-breaker, and if $|N_\rho(c)| = 3$, let $L_i$ be the component of $\mathsf{bd}(c) - \{ n_1,n_2 \}$ that is disjoint from $N_\rho(c)$.
We define $L_i'$ by pushing $L_i$ slightly so that it is disjoint from all cells in $C(\rho)$, while maintaining that the resulting curves intersect only at a common endpoint.
The \emph{trace} of $Q$ is defined to be $\bigcup_{i\in[k]} L_i'$.
If $Q$ is a cycle, its trace thus the homeomorphic image of the unit circle, and otherwise, it is an arc in $\Sigma$ with both endpoints in $N(\rho)$.

\paragraph{Aligned disks and grounded subgraphs.}
Let $G$ be a graph and let $\rho = (\Gamma, \sigma, \pi)$ be a $\Sigma$-rendition of $G$. 
We say that a 2-connected subgraph $H$ of $G$ is \emph{grounded (in $\rho$)} if every cycle in $H$ is grounded and no vertex of $H$ is drawn by $\Gamma$ in a vortex of $\rho$.
A disk in $\Sigma$ is called \emph{$\rho$-aligned} if its boundary only intersects $\Gamma$ in nodes.
If $H$ is planar, we say that it is \emph{flat in $\rho$} if there exists a $\rho$-aligned disk $\Delta \subseteq \Sigma$ which contains all cells $c \in C(\rho)$ with $E(\sigma(c)) \cap E(H) \neq \emptyset$ and $\Delta$ does not contain any vortices of $\Gamma$.

\paragraph{Linear decompositions of vortices.}
Let $(G,\Omega)$ be a society.
A \emph{linear decomposition} of $(G,\Omega)$ is a labelling $v_1,v_2,\dots,v_n$ of $V(\Omega)$ such that $v_1,v_2,\dots,v_n$ appear in $\Omega$ in the order listed, together with sets $(X_1,X_2,\dots,X_n)$ such that
\begin{enumerate}
    \item $X_i\subseteq V(G)$ and $v_i\in X_i$ for all $i\in[n]$,
    \item $\bigcup_{i\in[n]}X_i=V(G)$ and for every $uv\in E(G)$ there exists $i\in[n]$ such that $u,v\in X_i$, and
    \item for every $x\in V(G)$ the set $\{ i\in[n] ~\!\colon\!~ x\in X_i \}$ forms an interval in $[n]$.
\end{enumerate}
The \emph{adhesion} of a linear decomposition is $\max \{ |X_i\cap X_{i+1}| ~\!\colon\!~ i\in[n-1] \}$.
The \emph{width} of a linear decomposition is $\max \{ |X_i| ~\!\colon\!~ i\in[n] \}$.
We say that $(G,\Omega)$ has \emph{depth} at most $k$ if there exists a linear decomposition with adhesion at most $k$ for $(G,\Omega)$.

\paragraph{Depth of vortices.}
Let $G$ be a graph and $\rho$ be a $\Sigma$-rendition of $G$ with a vortex cell $c_0$.
Notice that $c_0$ defines a society $(\sigma(c_0),\Omega_{c_0})$, where $V(\Omega_{c_0})$ is the set of vertices of $G$ corresponding $N_\rho(c_0)$.
The ordering $\Omega_{c_{0}}$ is obtained by traversing along the boundary of the closure of $c_0$ in anti-clockwise direction.
We call $(\sigma(c_0),\Omega_{c_0})$ as obtained above the \emph{vortex society} of $c_0$.

The \emph{depth} of the vortex $c_0$ is thereby defined as the depth of its vortex society.
Given a $\Sigma$-rendition $\rho$ with vortices, we define the \emph{depth of $\rho$} as the maximum depth of its vortex societies.

\subsection{The Local Structure Theorem}
We now present a version of the local structure theorem derived from \cite{GorskySW2025Polynomial}.
As noted earlier, we are leaving out a lot of additional detail that the results from \cite{GorskySW2025Polynomial} yield.
Let us first define one more notion on the relation of meshes and tangles.

\paragraph{$M$-centrality.}
Let $\Sigma$ be a surface and let $\rho$ be a $\Sigma$-rendition of a graph $G$ containing an $r$-mesh $M$.
We say that $\rho$ is \emph{$M$-central} if there is no cell $c \in C(\rho)$ such that $V(\sigma(c))$ contains the majority side of a separation from $\mathcal{T}_M$.
Similarly, let $A \subseteq V(G)$, $|A| \leq r-1$, let $\Sigma'$ be a surface and $\rho'$ be a $\Sigma'$-rendition of $G-A$.
Then we say that $\rho'$ is \emph{$(M-A)$-central} for $G$ if no cell of $\rho'$ contains the majority side of a separation from $\mathcal{T}_M \cap \mathcal{S}_{r-|A|}$.

\paragraph{Weak layouts.}
Given a graph $G$ with a $\Sigma$-rendition $\rho$, we define the \emph{$\rho$-torso}\footnote{We choose this name to distinguish these objects from the more common notion of torsos, which we will define later.} of $G$ to be the graph $T$ with vertex set $N(\rho)$ and the edges resulting from adding an edge $uv$ to $T$ if there exists a non-vortex cell $c \in C(\rho)$ such that $u,v \in N_\rho(c)$.

Let $w$, $k$, $d$, $b$, and $a$ be non-negative integers and let $\Sigma$ be a surface.
We say that a graph $G$ containing a mesh $M$ has a \emph{weak $k$-$(a,b,d)$-$\Sigma$-layout centred at $M$} if there exists a set $A \subseteq V(G)$ with $|A| \leq a$, a submesh $M' \subseteq M$, and a $\Sigma$-rendition of $G - A$, such that
    \begin{enumerate}
        \item $\rho$ is \emph{$(M-A)$-central},
        \item $\rho$ has breadth at most $b$ and depth at most $d$,
        \item there exists a $w$-submesh $M' \subseteq M$, with $w \geq a + b(2d + 1) + 7$,
        \item $M'$ is flat in $\rho$,
        \item each vortex of $\rho$ has a linear decomposition of adhesion at most $d$, and
        \item the $\rho$-torso of $G - A$ has a 2-cell-embedding induced by $\rho$ in $\Sigma$ with representativity at least $k$.
    \end{enumerate}
We call $\rho$ the \emph{layout-rendition}.

This now finally allows us to state a version of the Local Structure Theorem.

\begin{proposition}[Gorsky, Seweryn, and Wiederrecht \cite{GorskySW2025Polynomial}]\label{prop_lst}
    There exist functions $\mathsf{apex}_{\ref{prop_lst}},\mathsf{depth}_{\ref{prop_lst}},\mathsf{mesh}_{\ref{prop_lst}}\colon\mathbb{N}^2\to\mathbb{N}$ such that for all integers $k \geq 1$ and $t \geq 5$, every graph $H$ on $t$ vertices, every graph $G$ and every $\mathsf{mesh}_{\ref{prop_lst}}(t,k)$-mesh $M \subseteq G$ one of the following holds.
    \begin{enumerate}
    \item $G$ has an $H$-minor, or

    \item $G$ and $M$ have a weak $(\mathsf{apex}_{\ref{prop_lst}}(t,k),\nicefrac{1}{2}(t-3)(t-4),\mathsf{depth}_{\ref{prop_lst}}(t,k))$-$\Sigma$-layout centred at $M$ in a surface $\Sigma$ into which $H$ does not embed.\footnote{The fact that last point in the definition of weak layouts holds is a consequence of the explicit construction of the $\Sigma$-rendition in the proof of Theorem 15.1 in \cite{GorskySW2025Polynomial}. In particular, the representativity of the embedding is guaranteed by the existence of the extended large surface wall in the embedded part of the graph.}
    \end{enumerate}
Moreover, it holds that

{\centering
  $ \displaystyle
    \begin{aligned}
        \mathsf{apex}_{\ref{prop_lst}}(t,k),~ \mathsf{depth}_{\ref{prop_lst}}(t,k) \in \mathcal{O}\big((t,k)^{112}\big), \text{ and } \mathsf{mesh}_{\ref{prop_lst}}(t,k) \in \Ocal\big((t+k)^{115} \big) .
    \end{aligned}
  $
\par}

In particular, we have $\mathsf{mesh}_{\ref{prop_lst}}(t,k) \geq 2\mathsf{depth}_{\ref{prop_lst}}(t,k) + \mathsf{apex}_{\ref{prop_lst}}(t,k) + 1$.

There also exists an algorithm that, given $t$, $k$, a graph $H$, a graph $G$ and a mesh $M$ as above as input finds one of these outcomes in time $\poly(t+k)\cdot |E(G)|\cdot |V(G)|^2$.
\end{proposition}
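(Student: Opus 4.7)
The plan is to follow the classical Robertson--Seymour strategy for the Graph Minors Structure Theorem, but executed with polynomial-bound machinery developed in recent years (polynomial Flat Wall Theorem, polynomial Unique Linkage Theorem). The mesh $M$ plays the role of a large ``backbone'' flat structure that both anchors the argument and guides the identification of the surface $\Sigma$ into which $G-A$ will ultimately be rendered. First I would apply a polynomial Flat Wall Theorem to the mesh $M$ (whose size $\mathsf{mesh}_{\ref{prop_lst}}(t,k)$ is chosen precisely to feed such a theorem): the output is either a $K_t$-minor (which contains the $t$-vertex graph $H$ as a minor and yields outcome~1), or a set $A$ of at most $\mathsf{apex}_{\ref{prop_lst}}(t,k)$ apex vertices together with a large flat submesh $M' \subseteq M$ in $G-A$. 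The polynomial dependence of $|A|$ on $(t,k)$ is the key contribution of the recent polynomial Flat Wall Theorem.

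Next I would analyze how the rest of $G-A$ attaches to the flat wall carried by $M'$. Every bridge over $M'$ is routed into an emerging $\Sigma$-rendition by a standard surface-building argument: disjoint families of bridges between non-cofacial faces either create additional genus (a handle or a crosscap) or must be confined to a bounded number of exceptional faces that become vortex cells. Since $H$ does not embed in $\Sigma$, both the Euler genus of $\Sigma$ and the number of vortices are controlled; the sharp bound $\nicefrac{1}{2}(t-3)(t-4)$ on the breadth comes from a classical crossing-counting argument, essentially that $H$ would embed once we had too many vortices to absorb. Inside each vortex I would build a linear decomposition via the polynomial Unique Linkage Theorem: if the adhesion exceeded $\mathsf{depth}_{\ref{prop_lst}}(t,k)$, one could reroute a linkage across the vortex and either increase the topology of $\Sigma$ or create an $H$-minor, contradicting the minimality of the choices. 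The resulting rendition is $(M-A)$-central by construction, because the apex set $A$ was selected to kill exactly the high-connectivity obstructions that could otherwise split $\mathcal{T}_M$ across a cell, and the representativity of the $\rho$-torso embedding follows from preserving a large extended wall inside $M'$.

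The main obstacle is to orchestrate the three sources of blow-up --- apex extraction, surface identification, and vortex simplification --- without the exponents cascading beyond the stated $\Ocal((t+k)^{115})$. Classically each of these three ingredients was proved using exponential or even non-constructive arguments; the polynomial versions must be applied in a specific order so that later stages reuse, rather than re-run, the wall structure established earlier. Verifying this careful interleaving is precisely where the technical work of \cite{GorskySW2025Polynomial} lies, and it is where I would expect my own attempt to stall without directly invoking their refined intermediate lemmas. For the algorithmic version, each step admits an efficient implementation: the flat-wall extraction runs in $\poly(t+k)\cdot |E(G)|\cdot |V(G)|^{\Ocal(1)}$ time, apex identification uses hitting-set style subroutines, and the vortex linear decompositions are produced by the constructive variant of the polynomial Unique Linkage algorithm, combining to the stated $\poly(t+k)\cdot |E(G)|\cdot |V(G)|^2$ running time.
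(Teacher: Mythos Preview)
This proposition is not proved in the present paper at all: it is quoted verbatim from \cite{GorskySW2025Polynomial} and used as a black box in the proof of \Cref{thm_GMST_induction}. There is therefore no ``paper's own proof'' to compare against; the authors simply invoke the result, noting in a footnote that the representativity clause follows from the explicit construction in the proof of Theorem~15.1 of \cite{GorskySW2025Polynomial}.

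Your outline is a fair high-level description of the strategy in \cite{GorskySW2025Polynomial}, and you yourself identify the gap honestly: the hard content is the interleaving of the polynomial Flat Wall Theorem, the surface-building, and the vortex-taming so that the exponents do not cascade, and you concede you would ``stall without directly invoking their refined intermediate lemmas.'' That is exactly right --- what you have written is a roadmap, not a proof. In particular, the step ``Inside each vortex I would build a linear decomposition via the polynomial Unique Linkage Theorem'' hides essentially all of the difficulty: obtaining bounded depth from Unique Linkage requires a delicate iterated argument (redundancy/irrelevant-vertex reasoning combined with transaction rerouting) whose polynomial control is the main technical achievement of \cite{GorskySW2025Polynomial}. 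Similarly, the claim that breadth is bounded by $\tfrac{1}{2}(t-3)(t-4)$ via ``a classical crossing-counting argument'' is not something you can just assert; in the polynomial regime this bound comes from a specific society-classification theorem, not directly from genus considerations. So your proposal is accurate as a summary of the architecture, but it is not a proof and could not be completed without essentially reproducing \cite{GorskySW2025Polynomial}.
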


We note that the $\rho$-torsos we are constructing here are not necessarily minors of the graph we started with.
This is a problem we will now work on fixing by reproving the GMST.

\subsection{A variant of the Graph Minor Structure Theorem}\label{sec_GMSTvariant}
We now present a variant of the GMST, which differs from the core result in \cite{GorskySW2025Polynomial} only in the fact that the $\rho$-torsos of the bags of the tree-decomposition without their vortices are minors of the whole graph.
Here we are using the more common notion of torso from structure theory defined as follows:
Given a set $X \subseteq V(G)$, the \emph{torso} of $X$ in $G$ is the graph obtained from $G[X]$ by adding an edge $uv$ to $X$ if $u,v \in X$ and there exists a component $C$ in $G - X$ such that $u,v \in N_G(V(C))$.

For integers $w$, $b$, and $a$, we say that a graph $G$ has a \emph{strong $(a,b,w)$-near embedding} in a surface $\Sigma$ if there exist a set $A \subseteq V(G)$ called the \emph{apex set}, with $|A| \leq a$, and a $\Sigma$-rendition $\rho$ for $G - A$ of breadth at most $b$ where
\begin{enumerate}
    \item all vertices of $G - A$ that do not belong to the interior of a vortex of $\rho$ are grounded,\footnote{It is possible that $V(G)\subseteq A$, where $A$ is the apex set as above. In this case we assume $\Sigma$ to be the \emph{empty surface}. Only the empty graph embeds in the empty surface.}
    \item every vortex of $\rho$ has a linear decomposition of width at most $w$, and
    \item if $G'$ is the $\rho$-torso of $G-A$, then the result of removing $V(\sigma(c)) \setminus N(c)$ for each vortex $c \in C(\rho)$ from $G'$ is a minor of $G-A$ and each of its components has a 2-cell-embedding in a surface with genus at most that of $\Sigma$.
\end{enumerate}

\begin{theorem}\label{thm_GMST}
There exist functions $\mathsf{adhesion}_{\ref{thm_GMST}}, \mathsf{apex}_{\ref{thm_GMST}}, \mathsf{width}_{\ref{thm_GMST}} \colon \mathbb{N} \to \mathbb{N}$ such that for every graph $H$ on $t \geq 1$ vertices and every graph $G$ one of the following holds:
\begin{enumerate}
    \item $G$ contains $H$ as a minor, or
    \item there exists a tree-decomposition $(T,\beta)$ for $G$ with $|V(T)| \in \Ocal(|V(G)|)$ and of adhesion at most $\mathsf{adhesion}_{\ref{thm_GMST}}(t)$ such that for every $x \in V(T)$ the torso $G_x$ of $\beta(x)$ has a strong $(\mathsf{apex}_{\ref{thm_GMST}}(t),\nicefrac{1}{2}(t-3)(t-4),\mathsf{width}_{\ref{thm_GMST}}(t))$-near embedding, with the apex set $A$, into a surface where $H$ does not embed.
\end{enumerate}
Moreover, it holds that

{\centering
  $ \displaystyle
    \begin{aligned}
        \mathsf{adhesion}_{\ref{thm_GMST}}(t),\mathsf{apex}_{\ref{thm_GMST}}(t),~ \mathsf{width}_{\ref{thm_GMST}}(t) \in \Ocal\big(t^{2300}\big).
    \end{aligned}
  $
\par}
There also exists an algorithm that, given $H$ and $G$ as input, finds either an $H$-minor model in $G$ or a tree-decomposition $(T,\beta)$ as above in time $2^{\poly(t)}|V(G)|^{3}|E(G)|\log |V(G)|$.
\end{theorem}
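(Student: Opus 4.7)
\emph{Proof plan.}
The plan is to take the Graph Minor Structure Theorem of \cite{GorskySW2025Polynomial} as a black box: applied to an $H$-minor-free graph $G$, it already produces a tree-decomposition $(T,\beta)$ of polynomial adhesion whose torsos $G_x$ admit a \emph{weak} near-embedding in a surface excluding $H$, with apex count, vortex width, and breadth all polynomial in $t$ as dictated by \cref{prop_lst}. The only ingredient missing for \cref{thm_GMST} is condition (3) in the definition of a strong near-embedding: the $\rho$-torso of $G_x-A$ with the vortex interiors removed must be an honest minor of $G_x-A$, rather than merely a graph obtained by abstract contractions inside cells of the painting. All other requirements (bounded apex, breadth, linear vortex decomposition) are already provided by \cite{GorskySW2025Polynomial} after a transparent relabelling of parameters.

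The obstruction is that the painting $\rho$ may contain non-vortex cells $c$ with $|N_\rho(c)|\leq 3$ whose subgraph $\sigma(c)$ is not $|N_\rho(c)|$-linked, so contracting the interior of $c$ does not witness the virtual clique on $N_\rho(c)$ as a minor. To overcome this, I would, for each bag $G_x$ separately, apply Grohe's \emph{quasi-$4$-connected decomposition} \cite{Grohe2016Quasi4Connected} to the subgraph of $G_x-A$ obtained by removing the vortex interiors, thereby splitting the node $x$ of $T$ into a subtree whose torsos are each either of bounded size (absorbed into the adhesion) or quasi-$4$-connected. In the latter case every cell of arity at most three corresponds to a genuine $3$-cut of the piece, yielding three internally disjoint paths that certify the virtual edges as a minor and thereby establishing condition (3).

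The hard part will be the compatibility check that the $\Sigma$-rendition survives Grohe's refinement. The key observation I would exploit is that every $3$-separation picked by Grohe's procedure inside the embedded part is realised by a short noose of the underlying $\Sigma$-rendition, so each quasi-$4$-connected piece inherits a $2$-cell-embedding into a subsurface of $\Sigma$ together with its local share of apex vertices and vortices; each vortex stays confined to a single piece of Grohe's decomposition because its adhesion to the rest of the bag is bounded by the vortex depth, and any bounded-size interface that crosses a refinement edge can be safely moved into the apex set at a polynomial cost. Composing the polynomial bounds from \cref{prop_lst} with the constant additive blow-up introduced by Grohe's refinement, and iterating this across the at most linearly many bags, yields the stated $\mathcal{O}(t^{2300})$ bounds on adhesion, apex set size, and vortex width. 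Finally, combining the algorithmic version of the GMST in \cite{GorskySW2025Polynomial} with the polynomial-time computability of Grohe's quasi-$4$-connected decomposition gives the claimed $2^{\mathsf{poly}(t)}|V(G)|^{3}|E(G)|\log|V(G)|$ running time.
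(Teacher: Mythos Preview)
Your proposal identifies the right tool (Grohe's quasi-4-connected decomposition) and the right topological lever (small separators in a rendition of representativity $\geq 4$ are realised by contractible nooses). However, the paper takes a different architectural route, and your black-box plan has a real gap.

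The paper does \emph{not} invoke the global GMST of \cite{GorskySW2025Polynomial} and then post-process. Instead it reproves the GMST from scratch via the standard highly-linked-set induction (\Cref{thm_GMST_induction}), using only the \emph{Local} Structure Theorem (\cref{prop_lst}) as the black box. At each inductive step one obtains an apex set $A$ and a $\Sigma$-rendition $\rho$ of $G-A$ for the \emph{actual graph} $G$, not of a torso. Grohe's decomposition (\cref{prop_quasi4con}) is then applied to $G-A$ itself; the tangle of the flat submesh singles out one quasi-4-connected piece $G^\star$, and every $3$-separation from Grohe whose separator lies in the grounded part bounds a contractible disk (by representativity $\geq 4$), so one can fold the small side into a single non-vortex cell. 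Because Grohe's torsos are honest minors of their input, the refined $\rho'$-torso is a minor of $G-A$ directly. The tree-decomposition is then grown by recursing into cells and linear-decomposition bags of vortices, with the adhesion to the parent always pushed into the child's apex set.

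Your plan instead applies Grohe to the embedded part of each torso $G_x-A$. The difficulty is that $G_x$ already carries virtual clique edges on adhesion sets of the black-box decomposition, and those need not lie inside apices or vortex interiors; Grohe then certifies minors of the \emph{torso}, not of $G$, and condition (3) of a strong near-embedding as used downstream (e.g.\ in the proof of \Cref{thm_woodsquestion}) really needs the embedded part to sit inside $G$ as a minor. You would have to argue separately that the black-box GMST places every adhesion set into apex $\cup$ vortex $\cup$ a single $\leq 3$-node cell, which is exactly the property the paper manufactures by rebuilding the decomposition. Also, your justification that ``each vortex stays confined to a single piece because its adhesion to the rest is bounded by the vortex depth'' is incorrect: a vortex meets the grounded part along its entire boundary $\Omega_c$, which can be arbitrarily long, so a Grohe $3$-cut of the embedded part can split that boundary cycle; in the paper this is a non-issue because Grohe runs on all of $G-A$ (vortex interiors included) and the mesh tangle forces the relevant vortices onto the big side of every $3$-separation considered.
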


Towards a proof of this theorem, we will need several more notions and useful tools.

\paragraph{Highly linked sets.}
Let $\alpha \in [2/3, 1)_{\mathbb{R}}$.
Moreover, let $G$ be a graph and $X \subseteq V(G)$ be a vertex set. 
A set $S \subseteq V(G)$ is said to be an \emph{$\alpha$-balanced separator} for $X$ if for every component $C$ of $G - S$ it holds that $|V(C) \cap X| \leq \alpha|X|$. 
Let $k$ be a non-negative integer.
We say that $X$ is a \emph{$(k, \alpha)$-linked set} of $G$ if there is no $\alpha$-balanced separator of size at most $k$ for $X$ in $G$.
Given a $(3k, \alpha)$-linked set $X$ of $G$ we define $$\mathcal{T}_{X} \coloneqq \{ (A, B) \in \mathcal{S}_{k+1}(G) ~\!\colon\!~ |X \cap B| > \alpha|X| \}.$$ 
It is not hard to see that $\mathcal{T}_{S}$ is a tangle of order $k+1$ in $G$.

Highly linked sets give us an algorithmic way to find large meshes.
In particular, given a highly linked set $X$, we can find a large mesh whose tangle is a truncation of the tangle induced by $X$.

\begin{proposition}[Thilikos and Wiederrecht \cite{ThilikosW2024Excluding} (see Theorem 4.2.)]\label{prop_algogrid}
Let $k \geq 3$ be an integer and $\alpha \in [2/3,1)$.
There exist universal constants $c_1, c_2 \in \mathbb{N} \setminus \{ 0 \}$, and an algorithm that, given a graph $G$ and a $(c_1k^{20}, \alpha)$-linked set $X \subseteq V(G)$ computes in time $2^{\Ocal(k^{c_2})}|V(G)|^2|E(G)|\log(|V(G)|)$ a $k$-mesh $W \subseteq G$ such that $\mathcal{T}_W$ is a truncation of $\mathcal{T}_X$.
\end{proposition}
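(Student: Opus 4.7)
The plan is to bridge the highly linked set $X$ to a mesh $W$ via two reductions: first from $X$ to a large grid minor, and then from the grid minor to a mesh, with both steps carefully tracking the tangle.

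\textbf{Setting up the tangle.} The text already notes that a $(3k', \alpha)$-linked set induces a tangle of order $k'+1$; applied to $X$ with $k' = \lfloor c_1 k^{20}/3\rfloor$ we get that $\mathcal{T}_X$ has order polynomial in $k$. The purpose of choosing the exponent $20$ is that this matches (with room to spare) the polynomial grid-theorem tangle threshold.

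\textbf{From $\mathcal{T}_X$ to a large grid minor.} I would then invoke the algorithmic polynomial grid theorem (the algorithmic counterpart of Chuzhoy--Tan, or the weaker but still polynomial version sufficient here) in the following ``tangle-aware'' form: given a tangle $\mathcal{T}$ of order $\Omega(r^{c})$ for an appropriate constant $c$, one can find in FPT time a $r$-grid minor whose canonical tangle is a truncation of $\mathcal{T}$. Setting $r$ proportional to $k$ (large enough for the conversion in the next step) and choosing $c_{1}$ so that $c_{1} k^{20}$ exceeds $c' r^{c}$ for the absolute constant $c'$ from that theorem, the algorithm produces a grid minor $\Gamma$ of side $r \gg k$ whose branch sets distribute across the big sides of $\mathcal{T}_X$. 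The running time of this subroutine fits within $2^{\Ocal(k^{c_{2}})}\cdot |V(G)|^{2}|E(G)|\log|V(G)|$.

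\textbf{From grid minor to mesh, and tangle compatibility.} To upgrade $\Gamma$ to an honest $k$-mesh $W \subseteq G$, I would route paths through the branch sets $B_{i,j}$ of $\Gamma$: for each row $i$ choose a path inside $G[\bigcup_{j} B_{i,j}]$ that visits the adjacent branch sets in order, and analogously for columns, using the connectivity of branch sets and a standard rerouting/trimming argument so that each pair of a horizontal and a vertical path intersects in a subpath. Losing at most a constant factor turns an $r$-grid minor into a $k$-mesh $W$. For the tangle condition, let $(A,B) \in \mathcal{T}_W$, i.e.\ $|A \cap B| < k$ and $B\setminus A$ contains a full horizontal and a full vertical path of $W$. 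Because $W$ inherits the distribution of $\Gamma$'s branch sets, $B\setminus A$ touches a linear-in-$r$ fraction of the row and column families; since any separator of size $<k$ deletes only a vanishing fraction of $X$ (thanks to $X$ being $(c_{1}k^{20}, \alpha)$-linked with $k \ll c_{1}k^{20}$), one deduces $|B\cap X| > \alpha |X|$, so $(A,B) \in \mathcal{T}_X$.

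\textbf{Main obstacle.} The real technical work sits in the second step. Naively turning a grid minor into a mesh can pick up ``bad'' routing vertices that create small separators of $W$ which are not in $\mathcal{T}_X$, breaking the truncation property. Handling this requires two things: first, the grid minor must be aligned with $\mathcal{T}_X$ rather than arbitrary, which is precisely what the tangle-aware algorithmic grid theorem delivers; second, the rerouting of paths must stay entirely inside the large region of branch sets so that no small ``dangling'' portion of $W$ gets separated from the majority of $X$. The exponent $20$ in $k^{20}$ is the buffer that absorbs the polynomial losses incurred in both the grid-theorem step and the grid-to-mesh conversion.
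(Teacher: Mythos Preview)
The paper does not prove this proposition at all: it is quoted verbatim as an external result from Thilikos and Wiederrecht \cite{ThilikosW2024Excluding} (Theorem 4.2 there) and used as a black box in the proof of \Cref{thm_GMST_induction}. There is therefore no ``paper's own proof'' to compare your proposal against.

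As for the proposal itself, your high-level outline (linked set $\Rightarrow$ tangle $\Rightarrow$ grid minor aligned with the tangle $\Rightarrow$ mesh) is the expected route, and you correctly identify that the delicate part is keeping the tangle aligned through the grid-to-mesh conversion. However, several steps are left at the level of ``standard rerouting/trimming argument'' without indicating how one actually achieves that each $P_i\cap Q_j$ is a \emph{path} (this is genuinely nontrivial and is usually handled by passing through a wall rather than directly from a grid minor), and your final tangle-truncation argument (``$B\setminus A$ touches a linear-in-$r$ fraction of the row and column families $\Rightarrow$ $|B\cap X|>\alpha|X|$'') conflates the mesh tangle with the linked-set tangle in a way that needs more care than you give it. If you wanted to actually write this proof, you would need to consult the cited source rather than the present paper.
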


We will also need to be able to find balanced separators efficiently.
For this purpose we make use of a tool due to Reed.

\begin{proposition}[Reed \cite{Reed1992Finding}]\label{prop_findsep}
    There exists an algorithm that takes as input an integer $k$, a graph $G$, and a set $X \subseteq V(G)$ of size at most $3k+1$, and finds, in time $2^{\Ocal(k)}m$, either a $\nicefrac{2}{3}$-balanced separator of size at most $k$ for $X$ or correctly determines that $X$ is $(k,\nicefrac{2}{3})$-linked in $G$.
\end{proposition}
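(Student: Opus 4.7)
The plan is to first reformulate the problem in a combinatorial manner. Observe that a set $S \subseteq V(G)$ of size at most $k$ is a $\nicefrac{2}{3}$-balanced separator for $X$ if and only if setting $T \coloneqq S \cap X$, one can find a bipartition $(A, B)$ of $X \setminus T$ with $|A|, |B| \leq \nicefrac{2}{3}|X|$ and a vertex $A$-$B$ separator $S' \subseteq V(G) \setminus T$ in $G - T$ of size at most $k - |T|$, so that $S = T \cup S'$. The non-trivial direction is that, given any balanced $S$, the components of $G - S$ can be merged into two groups whose $X$-traces give the desired bipartition of $X \setminus T$; this follows from a first-fit / bin-packing argument, using that every component of $G - S$ meets $X$ in at most $\nicefrac{2}{3}|X|$ vertices and that two bins of capacity $\nicefrac{2}{3}|X|$ have total capacity $\nicefrac{4}{3}|X| \geq |X \setminus T|$.

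Given this reformulation, the algorithm enumerates all triples $(T, A, B)$ where $T \subseteq X$, $|T| \leq k$, and $(A, B)$ partitions $X \setminus T$ with $\max\{|A|, |B|\} \leq \nicefrac{2}{3}|X|$. For each such triple it computes a minimum vertex $A$-$B$ separator in $G - T$ via the standard reduction to $s$-$t$ max-flow: split each vertex $v$ into $v_{\mathsf{in}}, v_{\mathsf{out}}$ linked by a unit-capacity edge, replace each edge by uncapacitated arcs between the split copies, and attach a super-source adjacent to the vertices of $A$ and a super-sink adjacent to the vertices of $B$. If the resulting min-cut has value at most $k - |T|$, output the union of $T$ with the separator realising that cut; otherwise move on. If no triple succeeds, report that $X$ is $(k, \nicefrac{2}{3})$-linked, which is sound by the equivalence established above.

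For the running time, the number of triples is at most $3^{|X|} \leq 3^{3k+1} = 2^{\Ocal(k)}$, since each element of $X$ independently belongs to $T$, $A$, or $B$. For each triple we only need to check whether the min-cut value is at most $k - |T| + 1 \leq k + 1$, so we can abort after at most $k + 1$ augmenting paths. Using BFS-based augmentations on the unit-capacity network in the style of Ford-Fulkerson/Even-Tarjan, each augmentation costs $\Ocal(m)$ time, yielding $\Ocal(k \cdot m)$ per triple. The total running time is therefore $2^{\Ocal(k)} \cdot k \cdot m = 2^{\Ocal(k)} m$, matching the claimed bound.

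The main obstacle is the combinatorial reformulation and, within it, the bin-packing step that guarantees a bipartition of $X \setminus T$ with both sides of size at most $\nicefrac{2}{3}|X|$; once this equivalence is in hand, the algorithm reduces to a routine enumeration with a truncated max-flow subroutine. A secondary point to watch is stopping the max-flow computation early at flow value $k + 1$, which is what keeps the per-triple cost linear in $m$ rather than incurring an additional factor polynomial in $n$.
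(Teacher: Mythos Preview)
The paper does not prove \Cref{prop_findsep}; it is quoted as an external result of Reed \cite{Reed1992Finding}. So there is no in-paper proof to compare against, only the question of whether your argument is sound.

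Your proof is correct and is essentially Reed's original approach: guess how $X$ splits into the separator part $T$ and two ``sides'' $A,B$, then test each guess with a truncated max-flow. Two small remarks. First, your justification for the bipartition step (``two bins of capacity $\nicefrac{2}{3}|X|$ have total capacity $\nicefrac{4}{3}|X|\geq |X\setminus T|$'') states only a necessary condition; the actual argument needs slightly more, namely that with items sorted by decreasing size and placed into the lighter bin, the third and later items have size at most $\nicefrac{1}{3}|X|$, from which a short contradiction shows no bin ever exceeds $\nicefrac{2}{3}|X|$. Second, in the backward direction of your equivalence the separator $S'$ returned by max-flow may itself meet $X$, so $S\cap X$ need not equal the guessed $T$; this is harmless because you only claim $S=T\cup S'$ is a balanced separator of size at most $k$, not that $T=S\cap X$, but it is worth stating explicitly. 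With these clarifications the argument is complete and matches the intended $2^{\Ocal(k)}m$ bound.
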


\paragraph{Quasi-4-connected components.}
As mentioned earlier, we will want to ensure that the torsos we construct in \zcref{prop_lst} are actually minors of the original graph.
Luckily, there is a tool due to Grohe \cite{Grohe2016Quasi4Connected} that allow us to do this with relative ease by restricting ourselves to ``quasi-4-connected components'' of our graph (see also \cite{CarmesinK2023Characterising}).

We call a graph $G$ \emph{quasi-4-connected} if it is 3-connected and for all separations $(A,B)$ in $G$ of order 3 the set $A \setminus B$ or $B \setminus A$ only contains a single vertex.

\begin{proposition}[Grohe \cite{Grohe2016Quasi4Connected}]\label{prop_quasi4con}
    Every graph has a tree-decomposition $(T,\beta)$ of adhesion at most 3 such that for all $t \in V(T)$ the torso of $\beta(t)$ in $G$ is a minor of $G$ that is either quasi-4-connected or isomorphic to a complete graph of order at most 4.
    
    Furthermore, $(T,\beta)$ can be computed in $\Ocal(n^3)$-time.
\end{proposition}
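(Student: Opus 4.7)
The plan is to build $(T,\beta)$ by a three-stage hierarchical decomposition along low-order separators, starting from the coarsest and refining. First, decompose $G$ along cut vertices into its block tree, yielding a tree-decomposition of adhesion $1$ whose torsos are the blocks (2-connected subgraphs), and which are clearly minors of $G$ (no virtual edges are added). Second, on each 2-connected block $B$, invoke Tutte's decomposition along $2$-separations (equivalently, the SPQR tree without the R-into-S refinement): the torsos are either 3-connected graphs or cycles/bonds, and each torso is a minor of $B$ because for each adhesion pair $\{u,v\}$ that is completed into an edge, the opposite side is connected and contains two internally disjoint $u$--$v$ paths (by 2-connectivity) which can be contracted to realise the virtual edge. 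This second stage has adhesion $2$.

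Third, and this is the heart of the proof, refine each 3-connected torso $G'$ along its \emph{non-trivial} $3$-separations, i.e.\ those separations $(A,B)$ with $|A\cap B|=3$ and $|A\setminus B|,|B\setminus A|\geq 2$. The combinatorial key step is to show that the collection of non-trivial $3$-separations of $G'$ admits a canonical tree representation, analogous to Tutte's: any two non-trivial $3$-separations can be shown to be either laminar or to cross in a very restricted way that is ``resolved'' by combining them into a common tree-node, and the fixed points of this resolution are precisely the quasi-4-connected pieces. Assembling the resulting parts as bags yields a tree-decomposition $(T',\beta')$ of $G'$ of adhesion $3$ whose torsos are either quasi-4-connected or have at most $4$ vertices (in which case, after triangulating the separator, they are a subgraph of $K_4$). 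Each torso is a minor of $G'$: for a bag with adhesion triangle $\{x,y,z\}$, 3-connectivity of $G'$ guarantees, for every separator $\{x,y,z\}$, a connected subgraph on the opposite side containing three internally vertex-disjoint paths pairwise linking $x,y,z$, which contract to the virtual triangle. Composing these three stages, gluing the tree of stage $i+1$ into each leaf-type node of stage $i$, produces the final $(T,\beta)$ of adhesion at most $3$, and minorship is transitive along the hierarchy.

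The main obstacle I expect is the laminarity/tree structure for non-trivial $3$-separations in stage three: unlike $2$-separations, $3$-separations in a 3-connected graph can cross in intricate ways, and the restriction to ``non-trivial'' ones (the $\geq 2$ condition on each side) must be shown to be exactly the right one to obtain a well-defined tree and to guarantee that the resulting torsos are quasi-4-connected, rather than merely 3-connected with further splittable $3$-cuts. The correct route is to classify crossing patterns between two non-trivial $3$-separations $(A_1,B_1)$ and $(A_2,B_2)$, observe that the corner sets force either one to refine the other or to produce a common separator of order at most $3$ that dominates both, and then take the coarsest common refinement as a node of $T$; arguing that this process terminates at quasi-4-connected torsos and does not blow up the adhesion above $3$ is the delicate part.

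For the algorithmic claim, stage one is $\Ocal(n+m)$ via Hopcroft--Tarjan, stage two is $\Ocal(n+m)$ via SPQR-tree construction, and stage three is implemented by repeatedly finding a non-trivial $3$-separation (which can be done in $\Ocal(n^2)$ time per vertex triple tested, or faster via max-flow in an auxiliary graph) and splitting along it; a careful amortised analysis shows the total cost is $\Ocal(n^3)$, dominating the overall runtime.
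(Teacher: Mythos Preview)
The paper does not prove this proposition; it is quoted from Grohe \cite{Grohe2016Quasi4Connected} and used as a black box in the proof of \Cref{thm_GMST_induction}. Your three-stage plan (block tree, Tutte/SPQR on each block, then decomposition along non-trivial $3$-separations of each $3$-connected piece) is indeed the route Grohe takes, and you have correctly identified the delicate point as the tree-representability of the non-trivial $3$-separations of a $3$-connected graph.

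One imprecision worth flagging: your justification that each stage-three torso is a minor of $G'$ appeals to ``three internally vertex-disjoint paths pairwise linking $x,y,z$'' on the opposite side $B$. That is a $K_3$-\emph{subdivision} rooted at $x,y,z$, which $3$-connectivity of $G'$ does \emph{not} directly give you (Menger yields three disjoint paths between any \emph{two} vertices, not a triangle among three). What actually works is weaker and sufficient: show that $G'[B]$ is $2$-connected whenever $|B\setminus\{x,y,z\}|\geq 2$ (this uses $3$-connectivity of $G'$ and a short cut-vertex argument), and then use the fact that in any $2$-connected graph every triple of vertices admits a rooted $K_3$ \emph{minor}. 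The latter is a short case analysis via two internally disjoint $x$--$y$ paths and a two-fan from $z$ to their union, but it is not literally the internally disjoint triangle you invoked. With that correction your plan is sound.
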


\paragraph{A proof of our variant of the GMST.}
Now we can prove \zcref{thm_GMST} via a stronger version of it that we in turn prove by induction.
The structure of this proof is derived from one of the core proofs of \cite{RobertsonS1991Graph}.

\begin{theorem}\label{thm_GMST_induction}
    There exist functions $\mathsf{link}_{\ref{thm_GMST_induction}},\mathsf{apex}_{\ref{thm_GMST_induction}},\mathsf{width}_{\ref{thm_GMST_induction}}\colon\mathbb{N}\to\mathbb{N}$ such that for every graph $H$ on $t \geq 1$ vertices, every graph $G$, and every vertex set $X\subseteq V(G)$ with $|X| \leq 3\mathsf{link}_{\ref{thm_GMST_induction}}(t)+1$ one of the following holds:
\begin{enumerate}
    \item $G$ contains $H$ as a minor, or
    \item there exists a rooted tree-decomposition $(T,r,\beta)$ for $G$ with $|V(T)| \in \Ocal(|V(G)|)$ and of adhesion at most 
    \begin{align*}
    3\mathsf{link}_{\ref{thm_GMST_induction}}(t)+\mathsf{apex}_{\ref{thm_GMST_induction}}(t)+\mathsf{width}_{\ref{thm_GMST_induction}}(t)+3
    \end{align*}
    such that 
    \begin{enumerate}
        \item for every $x \in V(T)$ the torso $G_x$ of $\beta(x)$ has a strong $(4\mathsf{link}_{\ref{thm_GMST_induction}}(t)+\mathsf{apex}_{\ref{thm_GMST_induction}}(t),\nicefrac{1}{2}(t-3)(t-4),2\mathsf{link}_{\ref{thm_GMST_induction}}(t)+\mathsf{width}_{\ref{thm_GMST_induction}}(t))$-near embedding, with the apex set $A$, into a surface where $H$ does not embed, and
        \item let $A_r$ be the apex set for the torso $G_r$ at $r$, then $X \subseteq A_r$.
    \end{enumerate}
\end{enumerate}
Moreover, it holds that

{\centering
  $ \displaystyle
    \begin{aligned}
        \mathsf{apex}_{\ref{thm_GMST_induction}}(t),~ \mathsf{width}_{\ref{thm_GMST_induction}}(t) \in \Ocal\big(t^{115}\big)\text{ and }\mathsf{link}_{\ref{thm_GMST_induction}}(t)\in\Ocal\big(t^{2300}\big).
    \end{aligned}
  $
\par}
There also exists an algorithm that, given $H$ and $G$ as input, finds either an $H$-minor model in $G$ or a tree-decomposition $(T,\beta)$ as above in time \(2^{\poly(t)}n^3 m \log m\).
\end{theorem}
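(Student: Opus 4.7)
The plan is to prove the stronger \Cref{thm_GMST_induction} by strong induction on $|V(G)|$, using the parameter $X$ in the recursion to thread the ``$X$ in the root apex'' condition (b) through each recursive call. I would set $\mathsf{link}_{\ref{thm_GMST_induction}}(t)$ equal to $c_{1}\cdot \mathsf{mesh}_{\ref{prop_lst}}(t,1)^{20}\in\Ocal(t^{2300})$ (matching the linkedness required by \Cref{prop_algogrid}), and $\mathsf{apex}_{\ref{thm_GMST_induction}}(t) := \mathsf{apex}_{\ref{prop_lst}}(t,1)$, $\mathsf{width}_{\ref{thm_GMST_induction}}(t):=\mathsf{depth}_{\ref{prop_lst}}(t,1)$, both in $\Ocal(t^{115})$. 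The base case is $|V(G)|$ smaller than a constant depending on $t$, in which case a single root bag $\beta(r)=V(G)$ entirely inside the apex set trivially works.

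\textbf{Case A: $X$ is not $(\mathsf{link}_{\ref{thm_GMST_induction}}(t),2/3)$-linked.} Apply \Cref{prop_findsep} to obtain a $2/3$-balanced separator $S$ for $X$ with $|S|\le \mathsf{link}_{\ref{thm_GMST_induction}}(t)$. Let $G_{1},\dots,G_{\ell}$ be the subgraphs induced by $S$ together with each component of $G-S$; set $X_{i} := S\cup (X\cap V(G_{i}))$, whose size is at most $|S|+\tfrac{2}{3}|X|\le 3\mathsf{link}_{\ref{thm_GMST_induction}}(t)+1$, so induction applies to each $(G_{i},X_{i})$, yielding $(T_{i},r_{i},\beta_{i})$. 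Build the output by adding a new root $r$ with $\beta(r):= X\cup S$ (whose torso is small and admits a trivial near-embedding in the empty surface with apex set $\beta(r)$ itself) and attaching each $T_{i}$ at $r$. The adhesion $|\beta(r)\cap\beta(r_{i})|\le 4\mathsf{link}_{\ref{thm_GMST_induction}}(t)+1$ fits inside the claimed bound.

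\textbf{Case B: $X$ is $(\mathsf{link}_{\ref{thm_GMST_induction}}(t),2/3)$-linked.} Invoke \Cref{prop_algogrid} to obtain a $\mathsf{mesh}_{\ref{prop_lst}}(t,1)$-mesh $M\subseteq G$ whose tangle truncates $\mathcal{T}_{X}$, then apply \Cref{prop_lst}: either an $H$-minor is produced (and we are done), or we obtain a weak $\Sigma$-layout of $G$ centred at $M$ with apex set $A_{0}$ of size at most $\mathsf{apex}_{\ref{prop_lst}}(t,1)$ and rendition $\rho$ of $G-A_{0}$. Put $A:=A_{0}\cup X$; this secures condition (b) and keeps $|A|\le 4\mathsf{link}_{\ref{thm_GMST_induction}}(t)+\mathsf{apex}_{\ref{thm_GMST_induction}}(t)$. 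Let $G'$ be the $\rho$-torso of $G-A$. To install $G'$ as the root torso of a \emph{strong} near embedding, the decisive step is to promote $G'$ to a minor of $G-A$. Form $G^{\bullet}$ from $G-A$ by contracting, inside every vortex $c$ of $\rho$, all edges of $\sigma(c)$ that are not incident to $N_{\rho}(c)$, so that each vortex collapses to a subgraph on its nodes. Apply \Cref{prop_quasi4con} to $G^{\bullet}$, obtaining an adhesion-$3$ tree-decomposition whose torsos are either $K_{4}$ or quasi-$4$-connected minors of $G^{\bullet}$, hence, after un-contracting, minors of $G-A$. Take the \emph{common refinement} of this adhesion-$3$ decomposition with the cell-structure of $\rho$: every non-vortex cell (which has at most $3$ nodes) is absorbed into a Grohe-bag; every vortex is preserved as its own bag, inheriting from $\rho$ a linear decomposition whose width is at most $2\mathsf{link}_{\ref{thm_GMST_induction}}(t)+\mathsf{width}_{\ref{thm_GMST_induction}}(t)$ (the factor $2\mathsf{link}$ arising from enlarging the linear layout to contain the at most $3$ Grohe-adhesion vertices plus their incident cells, after noting the bound on $|X|$). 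The root bag assembled from the ``embedded'' quasi-$4$-connected components carries a $2$-cell embedding in $\Sigma$ inherited from $\rho$, yielding a strong near embedding in the sense of \Cref{sec_GMSTvariant}; for every Grohe-bag not entirely in this root, the adhesion to its parent has size at most $3\mathsf{link}_{\ref{thm_GMST_induction}}(t)+1$, and we recurse on the pair (bag-subgraph, adhesion set).

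\textbf{Main obstacle and wrap-up.} The principal difficulty is the Grohe-overlay step: one must verify that refining the LST rendition by \Cref{prop_quasi4con} preserves both the vortex linear decompositions (absorbing at most $3$ extra adhesion vertices into each vortex's width budget) and the $2$-cell embedding of the embedded skeleton (which follows because the Grohe-separations of order at most $3$ inside the embedded part correspond to face-respecting separations of the $\rho$-torso, a consequence of $M$-centrality and the flatness of the submesh $M'$). Secondary bookkeeping includes verifying $|X_{i}|\le 3\mathsf{link}_{\ref{thm_GMST_induction}}(t)+1$ at every recursive call and that every recursion strictly decreases $|V(G)|$ (true because the root bag always contains at least one vertex of $M\setminus A$ that does not appear in any child bag, thanks to the flat submesh). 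Summing the per-step costs — $2^{\Ocal(\mathsf{link}(t))}m$ for \Cref{prop_findsep}, $2^{\poly(t)}n^{2}m\log n$ for \Cref{prop_algogrid}, $\poly(t)n^{2}m$ for \Cref{prop_lst}, and $\Ocal(n^{3})$ for \Cref{prop_quasi4con} — over the $\Ocal(n)$ recursive calls yields the announced runtime $2^{\poly(t)}n^{3}m\log m$, and \Cref{thm_GMST} follows from \Cref{thm_GMST_induction} by choosing $X=\emptyset$.
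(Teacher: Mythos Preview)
Your overall architecture matches the paper's: induct with the marked set $X$, split on whether $X$ is highly linked via \Cref{prop_findsep}, and in the linked case feed a mesh from \Cref{prop_algogrid} into \Cref{prop_lst}, then use \Cref{prop_quasi4con} to promote the $\rho$-torso to a genuine minor. However, two of your choices create real gaps.

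\textbf{The representativity parameter.} You call \Cref{prop_lst} with $k=1$, whereas the paper calls it with $k=4$. This is not cosmetic. The paper applies \Cref{prop_quasi4con} to $G-A$ (not to a contracted graph), uses the mesh tangle to single out the unique quasi-$4$-connected torso $G^{\star}$ on the big side of every order-$\le 3$ separation, and then must argue that each such separation $(A',B')$ with $S=A'\cap B'\subseteq N(\rho)$ can be absorbed into a single non-vortex cell of a modified rendition $\rho'$. The mechanism is that $S$ determines a closed curve through at most three nodes of the embedded $\rho$-torso; because the embedding has representativity at least $4$, this curve is contractible, and the small side fits in a disk that becomes a new cell. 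With $k=1$ you only get representativity $\ge 1$, so a $3$-separator may sit on a non-contractible curve and your ``face-respecting separations, a consequence of $M$-centrality and the flatness of $M'$'' does not rescue this: $M$-centrality is a statement about single cells, not about global noncontractibility, and flatness of $M'$ is local to one disk. Without representativity $\ge 4$ the minor-promotion step fails.

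\textbf{The $2\,\mathsf{link}$ contribution to vortex width, and the induction measure.} Your explanation of the $2\,\mathsf{link}_{\ref{thm_GMST_induction}}(t)$ term (``at most $3$ Grohe-adhesion vertices plus their incident cells'') is not the actual source. In the paper, the root bag is $U\cup A_r$ with $A_r=A\cup X$, and the bound $|Y^i_j\cap\beta(r)|\le 2\,\mathsf{link}(t)+\mathsf{width}(t)$ comes from $|Y^i_j\cap X|\le \tfrac{2}{3}|X|\le 2\,\mathsf{link}(t)$, which holds because $\rho'$ is $(M^{\star}-A)$-central and $\mathcal{T}_M$ truncates $\mathcal{T}_X$ (so no vortex bag can hold a $\mathcal{T}_X$-big side). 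This is also why the paper sets $\mathsf{width}_{\ref{thm_GMST_induction}}(t)=2\,\mathsf{depth}_{\ref{prop_lst}}(t,4)+1$ rather than $\mathsf{depth}_{\ref{prop_lst}}(t,1)$. Finally, the paper inducts on $|V(G)\setminus X|$ and, whenever $|X|\le 3\,\mathsf{link}(t)$, first pads $X$ by one vertex; this is what guarantees strict progress in both cases. Your ``the root bag always contains at least one vertex of $M\setminus A$ not in any child'' is not a proof of strict decrease for the non-vortex-cell and vortex-bag recursions.
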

\begin{proof}
    Our functions for this proof are:
    \begin{align*}
        \mathsf{apex}_{\ref{thm_GMST_induction}}(t)    \coloneqq & \ \mathsf{apex}_{\ref{prop_lst}}(t,4) \\
        \mathsf{width}_{\ref{thm_GMST_induction}}(t)  \coloneqq & \ 2\mathsf{depth}_{\ref{prop_lst}}(t,4) + 1 \\
        \mathsf{link}_{\ref{thm_GMST_induction}}(t)    \coloneqq & \ \mathsf{c}_1 \mathsf{mesh}_{\ref{prop_lst}}(t,4)^{20},
    \end{align*}
    where $\mathsf{c}_1$ is the first constant from \zcref{prop_algogrid}.
    The bound on the order of $\mathsf{link}_{\ref{thm_GMST_induction}}(t)$ is justified by the fact that $\mathsf{mesh}_{\ref{prop_lst}}(t,4) \in \Ocal(t^{115})$.

    We proceed by induction on $|V(G) \setminus X|$ and observe that in case $|V(G)| \leq 3\mathsf{link}_{\ref{thm_GMST_induction}}(t) + 1$, we may simply take $T$ to be the tree on one vertex $r$ and set $\beta(t) \coloneqq V(G)$.
    This satisfies the second part of our statement if we simply put the entirety of the graph into the apex set for the root.
    Thus we may move on assuming $|V(G)| \geq 3\mathsf{link}_{\ref{thm_GMST_induction}}(t) + 2$.

    Suppose next that $|X| \leq 3\mathsf{link}_{\ref{thm_GMST_induction}}(t)$.
    This allows us to take some vertex $v \in V(G) \setminus X$ and let $X' \coloneq X \cup \{ v \}$.
    We may now apply the induction hypothesis to $G$ and $X'$, since $|V(G) \setminus X'| < |V(G) \setminus X|$, which results in us either finding $H$ as a minor, or the rooted tree-decomposition we desire.
    Hence, we may assume that $|X| = 3\mathsf{link}_{\ref{thm_GMST_induction}}(t) + 1$.

    Applying \zcref{prop_findsep} to $X$ in $G$ now yields one of two possible outcomes in $(2^{\mathsf{poly}(t)}m)$-time:
    \begin{enumerate}
        \item A $\nicefrac{2}{3}$-balanced separator $S$ for $X$ in $G$ with $|S| \leq \mathsf{link}_{\ref{thm_GMST_induction}}(t)$, or
        \item $X$ is $(\mathsf{link}_{\ref{thm_GMST_induction}}(t), \nicefrac{2}{3})$-linked in $G$.
    \end{enumerate}

    \textbf{Case 1:}
    There exists a $\nicefrac{2}{3}$-balanced separator $S$ for $X$ in $G$ with $|S| \leq \mathsf{link}_{\ref{thm_GMST_induction}}(t)$.

    Let $G_1', \ldots , G_\ell'$ be the components of $G - S$ and for each $i \in [\ell]$, let $G_i \coloneqq G[V(G_i') \cup S]$.
    Moreover, for each $i \in [\ell]$, let $X_i' \coloneqq (V(G_i) \cap X) \cup S$.
    By definition of balanced separators, we have
    \begin{align*}
        |X_i'|  \leq & \ \lfloor \nicefrac{2}{3} \cdot  (3\cdot \mathsf{link}_{\ref{thm_GMST_induction}}(t) + 1) \rfloor + \mathsf{link}_{\ref{thm_GMST_induction}}(t) \\
                \leq & \ 3\cdot \mathsf{link}_{\ref{thm_GMST_induction}}(t) .
    \end{align*}
    This allows us to define a rooted tree-decomposition $(T,r,\beta)$ with the desired properties, in the absence of an $H$-minor, as follows.
    First, we introduce the vertex $r$ and set $\beta(r) \coloneqq X \cup S$.
    Due to their relatively small sizes, we may let all of $\beta(r)$ be the apex set for $r$.

    For each $i \in [\ell]$ where $V(G_i) = X_i'$, we may then simply introduce a vertex $x_i$ to the tree, make it adjacent to $r$, and set $\beta(x_i) \coloneqq X_i'$.
    For the remaining $i \in [\ell]$, there exists some $v_i \in V(G_i) \setminus X_i'$, allowing us to set $X_i = X_i' \cup \{ v_i \}$ whilst ensuring that $|V(G_i) \setminus X_i| < |V(G) \setminus X|$.
    This allows us to apply the induction hypothesis to $G_i$ and $X_i$.
    If this yields $H$ as a minor, we are done.
    Otherwise, there exists a rooted tree-decomposition $(T_i,r_i,\beta_i)$ with the desired properties.

    We may assume that we do not find an $H$-minor this way.
    For each $i \in [\ell]$ for which $v_i$ is defined, we may then add $T_i$ to our tree, make $r_i$ adjacent to $r$, and set $\beta(x) \coloneq \beta_i(x)$ for all $x \in V(T_i)$.
    This concludes the proof of our theorem in this case.

    \textbf{Case 2:}
    $X$ is $(\mathsf{link}_{\ref{thm_GMST_induction}}(t), \nicefrac{2}{3})$-linked in $G$.

    We start by applying \zcref{prop_algogrid} to find a $\mathsf{mesh}_{\ref{prop_lst}}(t,4)$-mesh $M$ in $G$ such that the tangle $\mathcal{T}_M$ is a truncation of the tangle $\mathcal{T}_X$, which takes $(2^{\mathsf{poly}(t)}n^2m \log m)$-time.
    This allows us to continue by applying \zcref{prop_lst} to $M$, which takes $\mathsf{poly}(t)n^2m$-time.
    If this results in an $H$-minor model, we are done.
    
    Hence, we must instead find a weak 4-$(\mathsf{apex}_{\ref{prop_lst}}(t,4),\nicefrac{1}{2}(t-3)(t-4),\mathsf{depth}_{\ref{prop_lst}}(t,4))$-$\Sigma$-layout for $G$ and $M$ centred at $M$ in a surface $\Sigma$ into which $H$ does not embed, with the layout-rendition $\rho$.
    According to the definition of weak layouts, there therefore exists a $w$-mesh $M' \subseteq M - A$, with $w \geq \mathsf{apex}_{\ref{prop_lst}}(t) + \nicefrac{1}{2}(t-3)(t-4)\mathsf{depth}_{\ref{prop_lst}}(t) + 7$, which defines a tangle $\mathcal{T}_{M'}$ of order $w$.

    We apply \zcref{prop_quasi4con} to find a tree-decomposition $(T',\beta')$ of $G - A$ with adhesion at most 3 such that for all $t \in V(T')$ the torso of $\beta(t)$ in $G$ is a minor of $G$ that is either quasi-4-connected or isomorphic to $K_4$.
    This again only takes $\Ocal(n^3)$-time.
    Since $\mathcal{T}_{M'}$ orients all separations of order less than $w$, it in particular orients all of the separations induced by the adhesion sets of $(T',\beta')$.
    As $T'$ is a tree, there therefore exists a unique $t \in V(T')$ such that $\beta(t)$ is on the big side of all separations of order at most three with respect to $\mathcal{T}_{M'}$.
    We let $G^\star$ be the torso of $\beta(t)$.

    Our goal is to now show that we can derive a weak 1-$(\mathsf{apex}_{\ref{prop_lst}}(t,4),\nicefrac{1}{2}(t-3)(t-4),\mathsf{depth}_{\ref{prop_lst}}(t,4))$-$\Sigma$-layout for $G^\star$ and a $w$-mesh $M^\star \subseteq G^\star$ from $\rho$.
    Let $\psi$ be the 2-cell-embedding of the $\rho$-torso $G'$ of $G-A$, which exists according to the fact that $\rho$ is a weak layout.
    In particular, $\psi$ has representativity 4.
    Now let $(A,B)$ be a separation of order at most three such that $V(G^\star) \subseteq B$ and let $S = A \cap B$.
    Note that if $S \setminus V(G') \neq \emptyset$, there exists some vortex $c \in C(\rho)$ such that $S \subseteq \sigma(c)$ and thus, we do not really care about these types of separations.

    We may therefore suppose that $S \subseteq V(G')$ and note that $S$ corresponds to a closed curve $\gamma$ in $\Sigma$ that intersects at most 3 vertices in $N(\rho)$.
    As $\psi$ has representativity at least 4, this cannot be a non-contractible curve and thus we may modify $\rho$ (and $\psi$) by turning the disk bounded by $\gamma$ into a cell that contains $A$ and has the vertices that $\gamma$ intersects on its boundary.
    Clearly, this new cell is not a vortex and we may refine $\rho$ and $\psi$ using all of these separations.
    The result is a rendition $\rho'$ that has an associated embedding $\psi'$ of the $\rho'$-torso $G''$ of $G-A$ in $\Sigma$, such that $G''$ is a minor of $G-A$.
    Regarding $M^\star$, we note that meshes are so well connected, that it is impossible for us to separate any substantial chunk out of the mesh using any separation with less than 4 vertices.
    Thus, we may simply adjust $M$ whilst we are refining $\rho$ and $\psi$ to derive a $w$-mesh from $M$ as desired.
    By construction $\rho'$ is therefore $(M^\star - A)$-central.

    From here we continue our work with $\rho'$.    
    We let $A_r \coloneqq A \cup X$ and observe that
    \[ |A_r| \leq |A \cup X| \leq 3\mathsf{link}_{\ref{thm_GMST_induction}}(t) + \mathsf{apex}_{\ref{prop_lst}}(t) + 1 . \]
    Let $c_1, \ldots , c_q$, with $q \leq \nicefrac{1}{2}(t-3)(t-4)$, be the vortices of $\rho'$.
    Since $\rho'$ and $\rho$ have the same vortices, there exists a linear decomposition $(Y_1, \ldots , Y_{\ell_i})$ of adhesion at most $\mathsf{depth}_{\ref{prop_lst}}(t,4)$ for each $i \in [q]$.
    Since $\rho'$ is $(M^\star- A)$-central and $\mathcal{T}_M$ is a truncation of $\mathcal{T}_X$, this further implies that, for all $i \in [q]$ and $j \in [\ell_i]$, we have
    \[ |Y_j^i \cap X| \leq \nicefrac{2}{3}|X| \leq 2\mathsf{link}_{\ref{thm_GMST_induction}}(t) . \]
    Let $U$ be the union of $N(\rho')$ and all vertices appearing in a set $Y_j^i \cap Y_{j+1}^i$ for some $i \in [q]$ and $j \in [\ell_i]$.
    We will now construct our desired rooted tree-decomposition $(T,r,\beta)$.
    First, we set $\beta(r) \coloneq U \cup A_r$.
    Using our earlier observations, we deduce that, for all $i \in [q]$ and $j \in [\ell_i]$, we have 
    \begin{align*}
        |Y_j^i \cap \beta(r)|   & \leq 2\mathsf{link}_{\ref{thm_GMST_induction}}(t) + 2\mathsf{depth}_{\ref{prop_lst}}(t,4) + 1 \\
                                & \leq 2\mathsf{link}_{\ref{thm_GMST_induction}}(t) + \mathsf{width}_{\ref{thm_GMST_induction}}(t) .
    \end{align*}
    For each $i \in [q]$ and $j \in [\ell_i]$, we let $F_{i,j} \coloneqq G[A \cup Y^i_j]$.
    Then
    \begin{align*}
        |V(F_j^i) \cap \beta(r)|    & \leq 2\mathsf{link}_{\ref{thm_GMST_induction}}(t) + \mathsf{width}_{\ref{thm_GMST_induction}}(t) + \mathsf{apex}_{\ref{thm_GMST_induction}}(t) \\
                                    & \leq 3\mathsf{link}_{\ref{thm_GMST_induction}}(t) ,
    \end{align*}
    since we have $\mathsf{mesh}_{\ref{prop_lst}}(t,4) \geq \mathsf{width}_{\ref{thm_GMST_induction}}(t) + \mathsf{apex}_{\ref{thm_GMST_induction}}(t)$ (see \zcref{prop_lst}).
    This allows us to set $X_{i,j}' \coloneqq V(F_{i,j} \cap \beta(r)$ for the coming arguments.
    As in Case 1, there are two cases to be considered.
    Should we have $V(F_{i,j}) = X_{i,j}'$, we introduce a vertex $r_{i,j}$ adjacent to $r$ and set $\beta(r_{i,j}) = V(F_{i,j})$.
    Otherwise, there exists some $v_{i,j} \in V(F_{i,j}) \setminus X_{i,j}'$ and we can set $X_{i,j} \coloneqq X_{i,j}' \cup \{ v_{i,j} \}$, which means that $|V(F_{i,j}) \setminus X_{i,j}| < |V(G) \setminus X|$ for all $i \in [q]$ and $j \in [\ell_i]$.
    Therefore, our induction hypothesis either yields an $H$-minor or a rooted tree-decomposition $(T_{i,j}, r_{i,j}, \beta_{i,j})$ that satisfies the second outcome of our statement.
    If we find an $H$-minor, we are done.
    Thus we can add the trees $T_{i,j}$ to $T$, make $r_{i,j}$ adjacent to $r$, and set $\beta(x) \coloneq \beta_{i,j}(x)$ for each $x \in V(T_{i,j})$.
    This finishes our arguments for the vortices of $\rho'$.

    We must now discuss how the graphs $J_c' \coloneqq \sigma_{\rho'}(c)$, for the non-vortex cells $c \in C(\rho')$, are decomposed in our tree-decomposition, if we fail to find an $H$-minor.
    Recall that $\rho'$ is $(M^\star - A)$-central and $\mathcal{T}_M$ is a truncation of $\mathcal{T}_X$.
    Thus $J_c'$ cannot contain more than $2\mathsf{link}_{\ref{thm_GMST_induction}}(t)$ vertices of $X$.
    We may therefore set $J_c \coloneqq G[A \cup V(J_c')]$ and $X_c' \coloneqq V(J_c) \cap A_r$.
    It follows that
    \begin{align*}
        |X_c'|  & \leq 2\mathsf{link}_{\ref{thm_GMST_induction}}(t) + \mathsf{apex}_{\ref{thm_GMST_induction}}(t) \\
                & \leq 3\mathsf{link}_{\ref{thm_GMST_induction}}(t).
    \end{align*}
    For each non-vortex cell $c \in C(\rho')$ where $V(J_c) = X_c'$, we introduce a vertex $r_c$ adjacent to $r$ and set $\beta(r_c) \coloneqq X_c'$.
    This leaves us to deal with those non-vortex cells $c$ for which we can find a vertex $v_c \in V(J_c) \setminus X_c'$.
    Analogous to our previous constructions, we set $X_c \coloneqq X_c' \cup \{ v_c \}$ and thus have $|V(J_c) \setminus X_c| < |V(G) \setminus X|$.
    Our induction hypothesis tells us that we either find an $H$-minor - and are thus done - or we find a rooted tree-decomposition $(T_c, r_c, \beta_c)$ for each $J_c$ satisfying the conditions laid out in the second option of our statement.
    As before, we introduce the trees $T_c$ to $T$, make $r_c$ adjacent to $r$, and set $\beta(x) \coloneqq \beta_c(x)$ for each $x \in V(T_c)$.
    This finally completes our construction of the rooted tree-decomposition $(T,r,\beta)$ with the outcomes we desire.

    Our arguments can be realised as a recursive algorithm, taking a total of $n$ recursive calls to find the tree-decomposition we want.
    Each call takes $(2^{\mathsf{poly}(t)}n^2m \log n)$-time, leading to a total runtime in $2^{\mathsf{poly}(t)}n^3m \log n$.
\end{proof}

\paragraph{Better runtimes for our GMST-variant through randomization.}
Notably, using the same arguments used in Section 17 of \cite{GorskySW2025Polynomial}, we can provide an algorithm running in time $(t + n)^{\Ocal(1)}$ that still guarantees polynomial bounds on our parameters.
The trade-off here is that the bounds on our functions get noticably worse, and the associated algorithm is randomised and returns the outcomes we desire only with high probability instead of certainty.
We refer the reader to \cite{GorskySW2025Polynomial} for an account of how we can modify our proof of \zcref{thm_GMST_induction} to get the desired results, whilst noting that this approach uses results from \cite{ChekuriKS2005Multicommodity,Amir2010Approximation,ChekuriE2013Polylogarithmic,ChekuriC2016Polynomial,KawarabayashiTW2021Quickly,ThilikosW2024Excluding}.

\begin{theorem}\label{thm_GMST_randomised}
There exist functions $\mathsf{adhesion}_{\ref{thm_GMST_randomised}}, \mathsf{apex}_{\ref{thm_GMST_randomised}}, \mathsf{width}_{\ref{thm_GMST_randomised}} \colon \mathbb{N} \to \mathbb{N}$ such that for every graph $H$ on $t \geq 1$ vertices and every graph $G$ one of the following holds:
\begin{enumerate}
    \item $G$ contains $H$ as a minor, or
    \item there exists a tree-decomposition $(T,\beta)$ for $G$ with $|V(T)| \in \Ocal(|V(G)|)$ and of adhesion at most $\mathsf{adhesion}_{\ref{thm_GMST_randomised}}(t,k)$ such that for every $x \in V(T)$ the torso $G_x$ of $\beta(x)$ has a strong $(\mathsf{apex}_{\ref{thm_GMST_randomised}}(t),\nicefrac{1}{2}(t-3)(t-4),\mathsf{width}_{\ref{thm_GMST_randomised}}(t))$-near embedding into a surface, with the apex set $A$, where $H$ does not embed, such that $G_x - A$ is a minor of $G$.
\end{enumerate}
Moreover, it holds that

{\centering
  $ \displaystyle
    \begin{aligned}
        \mathsf{adhesion}_{\ref{thm_GMST_randomised}}(t),\mathsf{apex}_{\ref{thm_GMST_randomised}}(t),~ \mathsf{width}_{\ref{thm_GMST_randomised}}(t) \in \Ocal\big(t^{11500}\big).
    \end{aligned}
  $
\par}
There also exists a randomised algorithm that, given $H$ and $G$ as input, finds either an $H$-minor model in $G$ or a tree-decomposition $(T,\beta)$ as above with high probability in time $(t+n)^{\Ocal(1)}$.
\end{theorem}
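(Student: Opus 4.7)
The plan is to mirror the inductive proof of \Cref{thm_GMST_induction}, substituting each deterministic subroutine with its randomized polynomial-time analogue as used in Section 17 of \cite{GorskySW2025Polynomial}. In particular, the skeleton of the argument stays identical: we process a graph $G$ together with a set $X\subseteq V(G)$ that is promised to lie in the apex set of the root bag, we invoke a balanced separator routine on $X$, and we either split along a separator (Case 1) or extract a central mesh and apply the local structure theorem (Case 2). The recursion and the bookkeeping on adhesion and apex sets is precisely as before.

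The three subroutines that must be swapped are the following. First, \Cref{prop_findsep} is replaced by a randomized polynomial-time routine that either produces a balanced separator for $X$ of size $\Ocal(\mathsf{link}(t)^{\alpha})$ or certifies $(\mathsf{link}(t),\nicefrac{2}{3})$-linkedness of $X$ with high probability; such a routine follows from approximate multicommodity flow techniques \cite{ChekuriKS2005Multicommodity,Amir2010Approximation,ChekuriE2013Polylogarithmic}. Second, \Cref{prop_algogrid} is replaced by the randomized polynomial-time mesh-extraction procedure derived from \cite{ChekuriC2016Polynomial,ThilikosW2024Excluding}. Third, \Cref{prop_lst} is replaced by the randomized polynomial-time version of the local structure theorem given in \cite{GorskySW2025Polynomial,KawarabayashiTW2021Quickly}, which outputs a weak layout with slightly weaker polynomial parameters. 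The quasi-4-connected decomposition step (\Cref{prop_quasi4con}) is already polynomial-time and deterministic, and the minor-taking argument that turns the $\rho$-torso into an actual minor of $G-A$ proceeds verbatim.

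The main obstacle is parameter propagation: each randomized substitute inflates the polynomial dependence on $t$ of the corresponding quantity (mesh size, apex budget, depth, linkedness threshold), and because our induction composes these bounds in a product-and-sum manner, the resulting functions $\mathsf{adhesion}_{\ref{thm_GMST_randomised}}, \mathsf{apex}_{\ref{thm_GMST_randomised}}, \mathsf{width}_{\ref{thm_GMST_randomised}}$ grow to $\Ocal(t^{11500})$ rather than $\Ocal(t^{2300})$. To control the degree blowup, I would redefine
\[
\mathsf{apex}_{\ref{thm_GMST_randomised}}(t),\ \mathsf{width}_{\ref{thm_GMST_randomised}}(t)\in \Ocal(t^{\mathsf{c}}), \qquad \mathsf{link}_{\ref{thm_GMST_randomised}}(t)\in \Ocal(t^{20\mathsf{c}}),
\]
choosing $\mathsf{c}$ as the maximum exponent appearing in any of the randomized subroutine guarantees, and then verify that the numerical inequalities used in Case 2 (centrality of $\rho'$, flatness of the submesh $M'$, the bound $|Y_j^i\cap X|\le \nicefrac{2}{3}|X|$, and the size bounds on $X_{i,j}'$ and $X_c'$) still go through with the new thresholds.

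Finally, the failure probability needs to be controlled. Since the recursion tree has $\Ocal(n)$ leaves and the root apex set strictly shrinks the remainder $V(G)\setminus X$ at each call, running each randomized subroutine at success probability $1-n^{-c}$ for a sufficiently large constant $c$ and taking a union bound over all recursive calls yields overall high probability of success. The total runtime is a product of $\Ocal(n)$ recursive calls, each of cost $(t+n)^{\Ocal(1)}$, giving the claimed $(t+n)^{\Ocal(1)}$ bound.
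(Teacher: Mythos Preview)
Your proposal is correct and matches the paper's approach essentially verbatim: the paper does not give a detailed proof of this theorem but simply states that one should rerun the inductive proof of \Cref{thm_GMST_induction} with the randomized substitutes from Section~17 of \cite{GorskySW2025Polynomial}, citing exactly the references \cite{ChekuriKS2005Multicommodity,Amir2010Approximation,ChekuriE2013Polylogarithmic,ChekuriC2016Polynomial,KawarabayashiTW2021Quickly,ThilikosW2024Excluding} you invoke. Your account of which three subroutines get swapped, the retention of \Cref{prop_quasi4con}, the resulting degree blowup to $\Ocal(t^{11500})$, and the union-bound argument for the failure probability are precisely the modifications the paper alludes to.
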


\section{Improved bounds on the relationship between clique- and grid-minors}\label{sec_cliquesandgrids}
We now begin by tackling the task of determining the branchwidth of a graph that excludes both the $K_t$ and the $k$-grid as a minor.
For this purpose we first introduce more refined tools concerning the branchwidth of graphs embedded in surfaces and their associated branch-decompositions.
Afterwards we switch immediately into the proof of the main result of this section, which is~\zcref{thm_woodsquestion}.

\subsection{Sphere-cut decompositions and more tools}
We will need a few tools from the surrounding theory, particularly concerning (partially) embedded graphs on surfaces.
Our principal goal will be to reprove the following result from \cite{ThilikosW2025Approximating} in a way that allows us to extract a branch-decomposition.

\begin{proposition}[Thilikos and Wiederrecht \cite{ThilikosW2025Approximating}]\label{prop_spherewithoutvorticeshighbw}
    Let $G$ be a 2-connected graph with $\mathbf{bw}(G) \geq 2$ and a $\Sigma$-rendition $\rho$, where $\Sigma$ is the sphere, such that $\rho$ has breadth $b$, every vortex of $\rho$ has a linear decomposition of width at most $w$, and for every non-vortex cell $c \in N(\rho)$ we have $|N(c)| = 2$ and $V(\sigma(c)) - N(c) = \emptyset$.
    Further let $G'$ be the result of deleting $V(\sigma(c')) \setminus N(c')$ from $G$ for each vortex $c' \in C(\rho)$.
    Then $\bw(G) \leq \bw(G') + 2wb$.
\end{proposition}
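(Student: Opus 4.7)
My plan is to construct a branch-decomposition of $G$ of the claimed width by grafting, onto a sphere-cut decomposition of $G'$, branch-decompositions of the vortex subgraphs derived from their linear decompositions.

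First, since $\Sigma$ is the sphere and each non-vortex cell of $\rho$ carries a single edge, $G'$ inherits a 2-cell embedding in the sphere, with each vortex disk $D_c$ appearing as a face (free of edges of $G'$). I will apply \Cref{prop_catchratbranchdecomp} to obtain a sphere-cut decomposition $(T',\tau')$ of $G'$ of width $\bw(G')$, where each edge $e' \in E(T')$ carries a simple closed noose $N_{e'}$ meeting $G'$ exactly at $\partial_{G'}(X_{e'})$. For each vortex $c$ with linear decomposition $(X_1^c, \ldots, X_{n_c}^c)$ along the boundary $(v_1^c, \ldots, v_{n_c}^c)$, I will construct a caterpillar-style branch-decomposition $B_c$ of $\sigma(c)$ of width at most $w$: a spine with one node per bag, with small subtrees of leaves attached to each spine node carrying the edges of $\sigma(c)$ first covered at that bag. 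Every cut of $B_c$ is bounded by a bag $X_i^c$ or an adhesion set $X_i^c \cap X_{i+1}^c$, hence by $w$.

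The core step is a grafting-and-extension procedure. Because $D_c$ is a face of $G'$, any noose entering $D_c$ can be freely rerouted inside $D_c$ without altering its cut in $G'$. Using this flexibility, I normalise $(T',\tau')$ so that each noose crosses $\partial D_c$ in at most two points, by iteratively pushing out innermost short arcs. When a normalised noose $N_{e'}$ crosses $\partial D_c$ at boundary nodes $v_i^c$ and $v_j^c$, I extend it across $D_c$ along the separator $(X_{i-1}^c \cap X_i^c) \cup (X_j^c \cap X_{j+1}^c)$ given by the linear decomposition, of size at most $2w$. The final tree $T$ is then obtained by splicing each $B_c$ into $T'$ at a carefully chosen edge $e'_c$, so that the cuts of $T$ correspond exactly to the extended nooses, with the spine of $B_c$ split coherently between spine positions $i$ and $j$ whenever the cut passes through $c$. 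Summing the at-most-$2w$ contribution per vortex over at most $b$ vortices yields $\bw(G) \leq \bw(G') + 2wb$.

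The hard part will be making the grafting precise and verifying the cut decomposition for every edge of $T$, especially when a single noose interacts with multiple vortices at once. This requires careful choice of the grafting edges $e'_c$ and alignment of the spine of $B_c$ with the cyclic order of $N(c)$ as seen along the normalised nooses, so that each cut simultaneously splits all affected spines correctly and cuts internal to a $B_c$ are subsumed into the sphere-cut plus vortex-separator bound. The hypotheses that $G$ is 2-connected and $\bw(G) \geq 2$ rule out degenerate cases where sphere-cut decompositions fail to be well-defined.
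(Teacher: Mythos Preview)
The paper does not prove this proposition itself; it is quoted from \cite{ThilikosW2025Approximating}. What the paper does prove is the constructive variant \Cref{thm_spherewithoutvorticeshighbw} with the weaker bound $\bw(G')+2wb+4b$, and that is the proof your proposal should be measured against. Your overall plan (sphere-cut decomposition of the planar part, linear-decomposition caterpillars for the vortices, then grafting) matches the paper's approach in spirit, but your grafting step has a genuine gap.

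You propose to splice the entire caterpillar $B_c$ into $T'$ at a single edge $e'_c$. This cannot work. For any edge $e\neq e'_c$ of $T'$, all edges of $\sigma(c)$ then lie on one side of the bipartition, say $X_e$. If the noose $N_e$ crosses $D_c$ and splits $N(c)$ into two arcs, every boundary vertex on the $Y_e$-arc whose $G'$-edges lie entirely in $Y_e$ becomes a new cut vertex of $\partial_G(X_e)$; there can be arbitrarily many such vertices, so the $+2w$ bound fails. Your ``spine split coherently at positions $i$ and $j$ whenever the cut passes through $c$'' is exactly what is needed, but a single graft point cannot realise different split positions for different edges of $T'$ simultaneously.

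The paper resolves this with a distributed graft. It first adds the boundary-cycle edges $v_i^cv_{i+1}^c$, forming $G''$ (this step uses \Cref{prop_spherewithoutvorticeshighbw} as a black box on width-$2$ vortices and costs the extra $+4b$). It then takes a sphere-cut decomposition of $G''$, partitions $E(\sigma(c))$ into sets $E_1,\ldots,E_\ell$ by first appearance in the linear decomposition, and grafts each $E_i$ separately at the leaf of $T'$ corresponding to the boundary edge $v_i^cv_{i+1}^c$. Because every noose in $G''$ crosses the boundary cycle of $c$ in at most two edges, it automatically separates $E_1,\ldots,E_\ell$ into two cyclic intervals, and the extra cut vertices are contained in two adhesions $X_j^c\cap X_{j+1}^c$ and $X_h^c\cap X_{h+1}^c$. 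The boundary edges are what anchor the pieces of the caterpillar at the right places in $T'$; this anchoring device is the ingredient your proposal is missing.
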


For this purpose, we need to discuss \zcref{prop_catchrat} in more depth, as the ideas used to prove this result actually provide a branch-decomposition with some particularly strong properties after some more massaging.

Let $G$ be a graph with an embedding $\phi$ in the sphere $\Sigma$ and let $F$ be the set of its faces.
A curve $\gamma$ in $\Sigma$ is called a \emph{$\phi$-noose}, or simply a \emph{noose} if $\phi$ is clear from the context, if $\gamma$ intersects the drawing $\phi$ only in the vertices of $G$ and furthermore, for each face $f \in F$, the intersection of $\gamma$ and $f$ is either empty or contains at most one component.
The tuple $(T,\tau)$ is called a \emph{sphere-cut decomposition} of $G$ in $\Sigma$, if $(T,\tau)$ is a branch-decomposition of $G$ and additionally, for every edge $e \in E(T)$ inducing the partition $X_e,Y_e$ of $E(G)$, there exists a noose $\gamma_e$ such that the set of vertices $V(\gamma_e)$ that $\gamma_e$ intersects is equal to $\partial_G(X_e)$.

By combining the core results of \cite{SeymourT1994Call} and \cite{GuT2008Optimal}, the following can be derived.

\begin{proposition}[Dorn, Penninkx, Bodlaender, and Fomin \cite{DornPBF10Efficient}]\label{prop:findscdecomp}
    Let $G$ be a graph without vertices of degree one that has an embedding $\phi$ in the sphere.
    There exists an algorithm that finds a sphere-cut decomposition of $G$ with width at most $\bw(G)$ in time $\mathcal{O}(n^3)$.
\end{proposition}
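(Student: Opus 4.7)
The plan is to reduce to the 2-connected case, compute an optimal branch-decomposition with the Gu--Tamaki algorithm from \Cref{prop_catchratbranchdecomp}, and then convert each cut into a noose by invoking the planar-duality machinery of Seymour and Thomas. First, by \Cref{lem_branchwidthofblocks}, both $\bw(G)$ and an optimal branch-decomposition can be recovered from the blocks of $G$ in linear time; the planar embedding $\phi$ restricts to a planar embedding on each block, and the gluing procedure used in that lemma introduces only cuts of order at most $1$ between blocks, each realized trivially by a degenerate noose through the corresponding cut-vertex. Hence it suffices to prove the statement for 2-connected $G$, where the absence of degree-one vertices also guarantees that every face of $\phi$ is bounded by a cycle, which is exactly what will make the noose constructions well behaved.

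Next, I would apply \Cref{prop_catchratbranchdecomp} to produce, in time $\mathcal{O}(n^3)$, a branch-decomposition $(T,\tau)$ of $G$ of width exactly $\bw(G)$. This decomposition need not be sphere-cut, so the heart of the proof is to refine it into one without increasing the width. For every edge $e \in E(T)$ with induced edge partition $(X_e, Y_e)$, I want to exhibit a noose $\gamma_e$ in $\Sigma$ whose vertex set is exactly $\partial_G(X_e)$ and that separates the drawings of $X_e$ and $Y_e$. The existence of such a noose for every cut of a width-optimal branch-decomposition of a 2-connected planar graph is the central combinatorial content of the Seymour--Thomas planar-branchwidth theorem: tangles of order $k$ correspond to families of nooses of length at most $k$ in the radial graph $R_\phi$, and the tree of tangles dual to $(T,\tau)$ is in fact realizable by such nooses. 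Algorithmically, I would find each $\gamma_e$ as a shortest closed walk in $R_\phi$ that visits precisely the vertices of $\partial_G(X_e)$ and bounds the region containing the drawing of $X_e$; this can be done in $\mathcal{O}(n^2)$ time per edge of $T$ by standard planar shortest-path methods, for an $\mathcal{O}(n^3)$ total cost.

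The main obstacle is ensuring that the family $\{\gamma_e\}_{e \in E(T)}$ is laminar and globally consistent with the tree structure, since two independently constructed nooses could cross in $\Sigma$. I would resolve crossings by local uncrossing moves at the intersection points, swapping portions of the two nooses so that one becomes nested inside the other, possibly subdividing edges of $T$ along the way; using 2-connectivity and the fact that every face boundary is a cycle, one checks that no such move enlarges any cut, so the width of the refined decomposition remains $\bw(G)$ and the resulting $(T',\tau')$ is sphere-cut. This step is where the Seymour--Thomas duality does the real work, guaranteeing that a compatible laminar family of nooses exists at all; Gu and Tamaki then provide the optimal branch-decomposition on which to hang the nooses efficiently. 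Combining the refined sphere-cut decompositions of the blocks via \Cref{lem_branchwidthofblocks} produces the claimed sphere-cut decomposition of $G$ in $\mathcal{O}(n^3)$ time.
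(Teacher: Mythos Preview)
The paper does not prove this proposition at all: it is quoted as a result of Dorn, Penninkx, Bodlaender, and Fomin, obtained by combining Seymour--Thomas with Gu--Tamaki. So there is no ``paper's own proof'' to compare against; what matters is whether your sketch actually reproduces the cited result.

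Your outline has a genuine gap. You take an \emph{arbitrary} optimal branch-decomposition $(T,\tau)$ from \Cref{prop_catchratbranchdecomp} and then assert that every cut $\partial_G(X_e)$ is realised by a noose, attributing this to Seymour--Thomas duality. That is not what Seymour--Thomas prove, and in fact it is false: an optimal branch-decomposition of a 2-connected plane graph can contain separations that are not induced by any noose (the separator vertices need not lie on a common simple closed curve traversing each face at most once). The subsequent uncrossing step then has nothing to work with, and in any case uncrossing two nooses does not in general produce nooses that still realise the \emph{same} vertex cuts, so laminarity would come at the cost of changing the decomposition in uncontrolled ways.

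The actual route in the cited paper is different and avoids this issue entirely. Seymour and Thomas show that for a 2-connected plane graph $G$, $\bw(G)$ equals half the carving-width of the medial graph $M(G)$, and the Gu--Tamaki algorithm computes an optimal \emph{carving} decomposition of $M(G)$. Each carving cut of $M(G)$ is, by construction, a bond that corresponds to a simple closed curve in the sphere meeting $G$ only in vertices and each face at most once --- i.e.\ a noose. Translating this carving decomposition back to $G$ therefore yields a sphere-cut decomposition directly, with no post-hoc noose-finding or uncrossing required. If you want a self-contained argument, you should go through the medial-graph/carving-width correspondence rather than trying to retrofit nooses onto a generic optimal branch-decomposition.
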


We are now ready to reprove \zcref{prop_spherewithoutvorticeshighbw} in a constructive way.
This result will later be used to compute branch-decompositions with appropriate width for $H$-minor-free graphs.
The bound we give on the width of the branch-decomposition we find is mildly worse than what \zcref{prop_spherewithoutvorticeshighbw} offers us.
Nonetheless, for our purposes, the additional factor will completely disappear in our estimates on all functions we base on this theorem.
Incidentally this is also the only proof in which we need to deal with sphere-cut decompositions.

\begin{theorem}\label{thm_spherewithoutvorticeshighbw}
    Let $G$ be a 2-connected graph with $\mathbf{bw}(G) \geq 2$ and a $\Sigma$-rendition $\rho$, where $\Sigma$ is the sphere, such that $\rho$ has breadth $b$, every vortex of $\rho$ has a linear decomposition of width at most $w$,\footnote{We assume that we are provided with these linear decompositions.} and for every non-vortex cell $c \in N(\rho)$ we have $|N(c)| = 2$ and $V(\sigma(c)) - N(c) = \emptyset$.
    Further let $G'$ be the result of deleting $V(\sigma(c')) \setminus N(c')$ from $G$ for each vortex $c \in C(\rho)$.
    Then there exists an algorithm that finds a branch-decomposition of $G$ with width at most $\bw(G') + 2wb + 6b$ in $\Ocal(bwm + n^3)$-time.
\end{theorem}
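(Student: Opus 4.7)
The plan is to start from a sphere-cut decomposition of $G'$, produced by \Cref{prop:findscdecomp} in $\Ocal(n^3)$ time (after absorbing any degree-one vertices of $G'$ into their incident edges, a standard preprocessing that does not affect branchwidth). This yields a decomposition $(T_0,\tau_0)$ of width $\bw(G')$, together with, for each edge $e\in E(T_0)$, a noose $\gamma_e$ on $\Sigma$ satisfying $V(\gamma_e)=\partial_{G'}(X_e)$. For each vortex $c$ I then build the natural caterpillar branch-decomposition $(T_c,\tau_c)$ of $\sigma(c)$ derived from its linear decomposition $(X^c_1,\dots,X^c_{\ell_c})$: take a spine $u^c_1,\dots,u^c_{\ell_c}$ and attach each edge $f\in E(\sigma(c))$ as a pendant leaf at the smallest $u^c_i$ with $f\subseteq X^c_i$. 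Cuts along spine edges of this caterpillar have size at most the adhesion $w$, and the final $T$ is obtained by splicing each $T_c$ into $T_0$.

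The geometric observation enabling this splicing is that, by the sphere-cut property, each noose $\gamma_e$ meets the open disk of any vortex $c$ in at most one arc, whose two endpoints lie in $N(c)$ and cut the remaining boundary nodes into two arcs that are consecutive in the cyclic order along $\partial c$. The family of all such chords in a single vortex is necessarily non-crossing (else some noose would meet $c$ in two arcs), so the chords form a laminar partition of $c$ into regions arranged as a tree. Because the linear decomposition's labelling respects the cyclic order on $\partial c$, each chord corresponds to a split of $(X^c_1,\dots,X^c_{\ell_c})$ into an interval of consecutive bags and its complement; the subtree property of linear decompositions implies this interval-split has adhesion at most $2w$ (at most $w$ at each of the two endpoints of the interval). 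I splice $T_c$ into $T_0$ by distributing the spine of $T_c$ among the regions of $c$ according to this correspondence: each region receives the sub-caterpillar indexed by the boundary arc it contains, together with all pendants placed there by the rule above.

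For an edge $e$ of the resulting $T$ inherited from $T_0$, the cut $\partial_G(X_e)$ is contained in $V(\gamma_e)$ plus, for each vortex $c$ whose boundary is crossed by $\gamma_e$, the $\le 2w$ vertices of the corresponding interval-adhesion; summing over at most $b$ vortices yields $|\partial_G(X_e)|\le \bw(G')+2wb$. For an edge $e$ created inside a spliced caterpillar of some vortex $c$, the cut consists of the $\le w$ adhesion vertices of a single spine cut plus at most $4$ boundary nodes per vortex coming from the chord endpoints of the enclosing $T_0$-region, explaining the additive $+4b$. I expect the main obstacle to be making the splicing precise and verifying this cut-size bound uniformly: in particular, one must handle cleanly that cutting an \emph{interval} out of a caterpillar requires two spine cuts (explaining the factor $2$ in $2wb$), and that caterpillars of distinct vortices sharing a common $T_0$-edge combine additively rather than producing interaction terms. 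The total runtime is $\Ocal(n^3)$ for the sphere-cut decomposition plus $\Ocal(bwm)$ for constructing and splicing all the caterpillars, giving $\Ocal(bwm+n^3)$ overall.
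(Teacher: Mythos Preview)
Your high-level strategy---take a sphere-cut decomposition of the planar part, build a caterpillar from each vortex's linear decomposition, and splice---is exactly the paper's approach. However, your splicing step has a real gap, and the paper fills it with a trick you are missing.

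The problem is twofold. First, you never specify \emph{where} in $T_0$ each sub-caterpillar is attached: ``each region receives the sub-caterpillar indexed by the boundary arc it contains'' does not name an edge or leaf of $T_0$ to subdivide, and without the boundary-cycle edges described below there may be no natural attachment point near a given $v_i\in N(c)$---indeed $v_i$ may be isolated in $G'$, so no leaf of $T_0$ sits near it. Second, your bound of ``at most $4$ boundary nodes per vortex'' for caterpillar edges is not justified: a region of the disk cut out by a non-crossing family of chords can be bounded by arbitrarily many chords, so the cut across a spliced-in caterpillar edge may pick up far more than four boundary vertices from that vortex.

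The paper's fix is to first add, for each vortex $c$, the full boundary cycle $v_1v_2\cdots v_{\ell_c}v_1$ and call the result $G''$; treating these cycles as width-$2$ vortices and applying \Cref{prop_spherewithoutvorticeshighbw} gives $\bw(G'')\le \bw(G')+4b$, which is where the $+4b$ actually comes from. One then takes the sphere-cut decomposition of $G''$ (not $G'$): each boundary edge $e_i=v_iv_{i+1}$ now has its own leaf $u_i=\tau''(e_i)$, and one attaches the ternary tree carrying $E_i$ (the edges of $\sigma(c)$ lying in the $i$-th bag) by subdividing at $u_i$. The width analysis is then clean: the boundary cycle bounds a single face of $G''$, so any noose $\gamma_e$ crosses that cycle in at most one arc, i.e.\ enters and leaves at exactly one pair $e_j,e_h$. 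Hence the vertices in $\partial(X_e)$ beyond $\partial_{G''}(X_e)$ all lie in $(Z_j\cap Z_{j+1})\cup(Z_h\cap Z_{h+1})$, of size at most $2w$; summing over the $b$ vortices yields width $\le \bw(G'')+2wb\le \bw(G')+4b+2wb$. The boundary-cycle trick is precisely what supplies both the attachment points and the ``at most two crossings'' count; without it your construction is incomplete.
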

\begin{proof}
    We may assume that $\rho$ has at least one vortex, as the statement otherwise trivially holds.
    Thus $b$ is positive.
    Furthermore, by definition of a linear decomposition, we also know that $w$ is positive, since each bag of each linear decomposition contains at least one vertex.
    
    First, we want to ensure that every vertex in $N_\rho(c)$ for any vortex $c \in C(\rho)$ is incident to an edge drawn outside of the vortices.
    Let $v_1, v_2, \ldots , v_\ell$ be a linearization of the order $\Omega_c$ originating from the vortex-society $(G_c, \Omega_c)$ of $c$ in $\rho$.
    We next introduce the edges $v_iv_{i+1}$ and $v_\ell v_1$, unless they are already found in $G'$.
    The graph that results from carrying out this process for all vortices is called $G''$.
    For each vortex $c \in C(\rho)$ we can modify $\rho$ by pushing the drawing of each edge between two vertices in $N_\rho(c)$ into $c$.
    A linear decomposition $(X_1,\ldots, X_\ell)$ of $(G_c,\Omega_c)$ (restricted to $G''$) of width 3 is then simply defined by letting $X_i = \{ v_1, v_i,v_{i+1} \}$ for all $i \in [\ell - 1]$ and $X_\ell = \{ v_1, v_\ell \}$.
    The resulting $\Sigma$-rendition $\rho''$ verifies that $G''$ has a $\Sigma$-rendition in the sphere with breadth $b$ and width 3, such that for every non-vortex cell $c \in N(\rho'')$ we have $|N(c)| = 2$ and $V(\sigma_{\rho''}(c)) - N(c) = \emptyset$.
    According to \zcref{prop_spherewithoutvorticeshighbw}, we have $\bw(G'') \leq \bw(G') + 6b$.

    Next, we observe that by construction of $G''$, there also exists a $\Sigma$-rendition $\rho^\star$ of $G''$ based on $\rho$ such that for every cell $c \in N(\rho^\star)$ we have $|N(c)| = 2$, all vortices of $\rho$ are disjoint from $c$, and $V(\sigma_{\rho^\star}(c)) - N(c) = \emptyset$.
    From this we can derive an embedding $\phi$ of $G''$ in $\Sigma$ such that $\phi$ only intersects the vortices of $\rho$ in the vertices that lie on their boundaries according to $\rho$.
    Our goal will be to now find a branch-decomposition for $G^\star = G'' \cup G$ of width at most $\bw(G') + 2wb + 6b$, which can be used to easily derive a branch-decomposition of the same width for $G$, as $G \subseteq G^\star$ by definition.

    Let $c \in C(\rho)$ be a fixed vortex.
    Since each vortex is bounded by a cycle in $G''$, there exists a block $B$ of $G''$ that contains $N_{\rho}(c)$.
    Using $\phi$ and the fact that $\bw(G'') \leq \bw(G') + 6b$, we use \zcref{prop:findscdecomp} to find a sphere-cut decomposition $(T'',\tau'')$ of $B$.
    We then modify $(T'',\tau'')$ as follows.
    For each vortex $c \in C(\rho)$, let $v_1, \ldots , v_\ell$ be a linearization of $\Omega_c$ from the vortex society $(G_c,\Omega_c)$ in $\rho$ associated with the linear decomposition represented by the sets $(Z_1, \ldots , Z_\ell)$.
    Recall that this linear decomposition has width at most $w$ and thus $|Z_i| \leq w$ for all $i \in [\ell]$.
    Let $E_c \coloneq E(G_c) \setminus E(G'')$ and note that $\sigma_\rho(c) = G_c$ and $E_c \cap E(G'')$ contains only edges that we added on the boundary of the vortices when constructing $G''$.
    We partition $E_c$ into $\ell$ sets $E_1, \ldots , E_\ell$ by letting $E_1 \coloneqq E(G_c[Z_1])$ and for all $i \in [2,\ell]$, letting $E_i \coloneqq E(G_c[Z_i]) \setminus \bigcup_{j=1}^{i-1} E_j$.
    Note that this is a partitioning of $E_c$ according to the properties of a linear decomposition.

    For each $i \in [\ell]$, let $e_i \in E(B)$ be $v_iv_{i+1} \in E(B)$, if $i \in [\ell-1]$, and $v_\ell v_1 \in E(B)$, if $i = \ell$.
    Further, let $u_i \coloneqq \tau''(e_i) \in L(T'')$.
    We construct $T'$ from $T''$ by subdividing the unique edge incident to $u_i$ in $T''$ and naming the subdivision vertex $w_i$, for each $i \in [\ell]$ such that $E_i \neq \emptyset$.
    Next, let $T_i$ be an arbitrary ternary tree with $|E_i|$ leaves, with an arbitrary edge subdivided and with $w_i'$ being the subdivision vertex in $T_i$.
    (If $|E_i| = 0$, we let $T_i$ be the empty graph and if $|E_i| = 1$ we let $w_i'$ be the unique vertex of $T_i$.)
    From this we construct $T$ in the obvious way by taking $T' \cup \bigcup_{i=1}^\ell T_i$ and adding the edge $w_iw_i'$ for each $i \in [\ell]$ with $E_i \neq \emptyset$.
    In case $E_i \neq \emptyset$, we further let $\tau_i$ be an arbitrary bijection from $E_i$ to $L(T_i)$.
    For each $e \in E(B) \cup E_c$, we then set $\tau(e)$ to $\tau''(e)$, if $e \in E(B)$, and to $\tau_i(e)$, if $e \in E_i$.

    The tuple $(T,\tau)$ now satisfies the definition of a branch-decomposition for the graph $B^\star \coloneqq B \cup G_c$.
    Now, we are interested in the width of this new decomposition.
    Every edge $e \in E(T)$ defines a partition $X_e,Y_e$ of the edges in $E(B) \cup E_c$.
    First, note that for any edge $e \in E(T)$ corresponding to an edge in $\bigcup_{i=1}^\ell E(T_i)$, we clearly have $|\partial_{B^\star}(X_e)| \leq w$.
    Furthermore, for any of the edges $e \in E(T)$ incident to $u_i$ for some $i \in [\ell]$, we have $|\partial_{B^\star}(X_e)| \leq 2$, which is acceptable.

    Thus we may now concentrate on an arbitrary edge $e \in E(T)$ such that $e$ is neither incident to $u_i$ for some $i \in [\ell]$, nor contained in $\bigcup_{i=1}^\ell E(T_i)$.
    Note that this in particular implies that $e$ induces a partition $X_e'',Y_e''$ in $(T'',\tau'')$ of the edges in $E(B)$, such that $X_e'' \subseteq X_e$ and $Y_e'' \subseteq Y_e$.\footnote{In case $e = w_iw_i'$ for some $i \in [\ell]$ one of the two sets $X_e'',Y_e''$ is empty.}
    Clearly, by construction of $(T,\tau)$, we have $|\partial_{B^\star}(X_e)| \geq |\partial_{B}(X_e'')|$, since $\partial_{B}(X_e'') \subseteq \partial_{B^\star}(X_e)$.
    We may therefore assume that there exists a $v \in \partial_{B^\star}(X_e) \setminus \partial_{B}(X_e'')$, as we would otherwise be done.
    Let $f \in X_e$ and $g \in Y_e$ be two edges that are both incident to $v$.
    
    Suppose that $f \in E(B)$ and therefore $f \in X_e''$.
    Then, since $v \in \partial_{B^\star}(X_e) \setminus \partial_{B}(X_e'')$, we must have $g \in E_c$ and in particular $v \in N_\rho(c)$.
    This implies that $v \in V(\Omega_c)$.
    We may assume w.l.o.g.\ that $v = v_1$ from our linearization of $\Omega_c$ and furthermore, that $g \in E_1$.
    Since $e$ is not incident to any $u_i$, it is in particular not incident to $u_1$ and thus, other than just having $g \in Y_e$, we also have $e_1 \in Y_e''$ and thus $v \in \partial_{B}(X_e'')$, yielding a contradiction.
    We conclude that $f \in X_e \setminus X_e''$ and $g \in Y_e \setminus Y_e''$, independent of the particular choice of $v \in \partial_{B^\star}(X_e) \setminus \partial_{B}(X_e'')$.

    As $(T'',\tau'')$ is a sphere-cut decomposition and $e$ corresponds to an edge of $T''$, there exists a noose $\gamma$, with $V(\gamma)$ being the vertices of $B$ it intersects, that intersects $c - N_\rho(c)$ in at most one component.
    Let $\Delta_1, \Delta_2$ be the closure of the two components of $\Sigma - \gamma$ and note that $\gamma$ forms the boundary of both of these disks.
    Thus, using the embedding $\phi$ of $B$ in $\Sigma$, there exist two subgraphs $G_1$ and $G_2$ of $B$ which we label such that $G_1 \cup G_2 = B$, $V(G_1 \cap G_2) = V(\gamma)$, $E(G_1) = X_e''$, and $E(G_2) = Y_e''$.
    Recall that $e_i = v_iv_{i+1}$ if $i \in [\ell-1]$ and $e_\ell = v_\ell v_1$, if $i = \ell$.
    
    Due to the fact that $\gamma$ intersects $c - N_\rho(c)$ in at most one component, there either exists a $j \in [2]$ such that $e_i \in E(G_j)$ for all $i \in [\ell]$, or there exist two distinct $j,h \in [\ell]$, with $j < h$, such that $e_1, \ldots , e_j \in E(G_1)$, $e_{j+1}, \ldots , e_h \in E(G_2)$, and $e_{h+1}, \ldots , e_\ell \in E(G_1)$.
    By construction of $(T,\tau)$, the first option implies that we have $E_c \subseteq X_e''$ or $E_c \subseteq Y_e''$, which means that $\partial_{B^\star}(X_e) \setminus \partial_{B}(X_e'') = \emptyset$ and thus we have confirmed that $|\partial_{B^\star}(X_e)| \leq \bw(G') + 6b$.
    
    Otherwise, since we know that $e$ is an edge of $T$ that is not incident to $u_i$ in $T$ and still corresponds to an edge of $T''$, we know that $e_i \in X_e$ if and only if $E_i \subseteq X_e$, again by construction of $(T,\tau)$.
    Thus, we have $\partial_{B^\star}(X_e) \setminus \partial_{B}(X_e'') \subseteq (Z_j \cap Z_{j+1}) \cup (Z_h \cap Z_{h+1})$ and, since the width of $\rho$ is $w$, we have $(Z_j \cap Z_{j+1}) \cup (Z_h \cap Z_{h+1}) \leq 2w$.
    Hence, we have $|\partial_{B^\star}(X_e)| \leq \bw(G') + 2w + 6b$.
    Thus $(T,\tau)$ has width at most $\bw(G') + 2w + 6b$.

    We can now repeat this construction for all vortices whose boundaries are contained in $V(B)$, yielding a branch-decomposition of $B$ together with its associated vortices of width at most $\bw(G') + 2wb' + 6b$, where $b' \leq b$ is the number of vortices belonging to $B$.
    Using arguments analogous to the proof of \zcref{lem_branchwidthofblocks}, we observe that we can repeat this procedure for every block of $G^\star$ and then easily combine the resulting branch-decompositions into a branch-decomposition of $G^\star$ of width at most $\bw(G') + 2wb + 6b$ in time $\mathcal{O}(bwm)$.
\end{proof}

To interact with \zcref{thm_GMST}, we also need an observation about the location of parts of the graph with high branchwidth, if the graph has a tree-decomposition with low adhesion.

\begin{proposition}[Thilikos and Wiederrecht \cite{ThilikosW2025Approximating}]\label{prop_findhighbwbag}
    Let $c$ be a positive integer, let $G$ be a graph with $\bw(G) > c$, and let $(T,\beta)$ be a tree-decomposition of $G$ with adhesion at most $c$.
    Then the branchwidth of $G$ is at most the maximum of the branchwidth of the torsos of the bags of $(T,\beta)$.
\end{proposition}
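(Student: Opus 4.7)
The plan is to invoke the tangle/branchwidth duality of \Cref{prop_tanglebranchwidthduality}. Setting $k = \bw(G) > c$, I would fix a tangle $\mathcal{T}$ of $G$ of order $k$ and show that $\mathcal{T}$ descends to a tangle of order $k$ in the torso $G_{t^*}$ of some bag $t^* \in V(T)$; by duality this forces $\bw(G_{t^*}) \geq k$, so $\max_t \bw(G_t) \geq \bw(G)$ as required.

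First I would localise $\mathcal{T}$ inside the tree $T$. Every edge $e = td$ of $T$ induces a separation $(A_e, B_e)$ of $G$ with $A_e,B_e$ being the bag unions of the two components of $T - e$; its order is $|\beta(t) \cap \beta(d)| \leq c < k$, so $\mathcal{T}$ orients it. Orient $e$ in $T$ toward the big side. Because $T$ has strictly fewer edges than vertices, a counting argument forces some $t^* \in V(T)$ to have all incident edges pointing inward. Writing $S_d = \beta(t^*) \cap \beta(d)$ and $U_d$ for the union of bags in the $d$-subtree of $T - t^*d$ minus $\beta(t^*)$, this sink property is equivalent to saying that for every neighbor $d$ of $t^*$, the local separation $(U_d \cup S_d, V(G) \setminus U_d)$ lies in $\mathcal{T}$ with $U_d \cup S_d$ as its small side.

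Next I would transfer $\mathcal{T}$ to an orientation $\mathcal{T}^*$ on $G_{t^*}$. For any separation $(X, Y)$ of $G_{t^*}$ of order $< k$, the key observation is that each $S_d$ is a clique in $G_{t^*}$ by the torso construction, so it cannot be split across the boundary: $S_d \subseteq X$ or $S_d \subseteq Y$. Extend $(X, Y)$ to a separation $(X^+, Y^+)$ of $G$ by placing each $U_d$ entirely on the side containing $S_d$; a direct check shows $X^+ \cap Y^+ = X \cap Y$, so the order is preserved. Define $(X, Y) \in \mathcal{T}^*$ iff $(X^+, Y^+) \in \mathcal{T}$.

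The remaining task, and the main obstacle, is to verify that $\mathcal{T}^*$ is a tangle of order $k$ in $G_{t^*}$. Well-definedness (independence of the tie-breaking choice when $S_d \subseteq X \cap Y$) and the three-set covering axiom both reduce to applications of the tangle axiom of $\mathcal{T}$ with a carefully chosen third separation drawn from the local family $\{(U_d \cup S_d, V(G) \setminus U_d) : d \sim t^*\} \subseteq \mathcal{T}$ produced in the localisation step. The delicate subcase is when three small sides $X_1, X_2, X_3 \in \mathcal{T}^*$ cover $G_{t^*}$ while some $S_{d^*}$ lies in $Y_1 \cap Y_2 \cap Y_3$: the naive extended cover misses $U_{d^*}$, so one substitutes one of the $(X_i^+, Y_i^+)$ with $(U_{d^*} \cup S_{d^*}, V(G) \setminus U_{d^*}) \in \mathcal{T}$ and re-argues to contradict the tangle axiom of $\mathcal{T}$; handling this substitution uniformly when several such $d^*$ occur is where most of the care is needed. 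Combined with \Cref{prop_tanglebranchwidthduality} this yields $\bw(G_{t^*}) \geq k$, completing the proof.
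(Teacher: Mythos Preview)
The paper does not actually prove \Cref{prop_findhighbwbag}; it is quoted from \cite{ThilikosW2025Approximating} and used as a black box. So there is no ``paper's own proof'' to compare against here.

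Your tangle-duality strategy is a natural one, and the localisation step (finding the sink $t^*$) as well as the well-definedness argument for $\mathcal{T}^*$ are correct. The gap is in your treatment of the three-small-sides axiom. Your proposed fix in the ``delicate subcase''---replacing one of the $(X_i^+,Y_i^+)$ by the local separation $(U_{d^*}\cup S_{d^*},\,V(G)\setminus U_{d^*})$---does not work as stated: once you drop, say, $(X_3^+,Y_3^+)$, you may lose coverage of those vertices and edges of $\beta(t^*)$ that lay only in $X_3$, and $G[U_{d^*}\cup S_{d^*}]$ does not recover them. So the three remaining small sides need not cover $G$, and you get no contradiction with the tangle axiom of $\mathcal{T}$. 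The difficulty is real (already for $|S_{d^*}|=3$ one can have $S_{d^*}\not\subseteq X_i$ for every $i$ while all three edges of the $S_{d^*}$-triangle are covered), and it does not dissolve by ``handling several $d^*$ uniformly''; a genuinely different argument is needed at this point.

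If you want to stay with tangles, one clean route is to first prove the robustness you need in $G$ rather than in $G_{t^*}$: show that for any finite family of separations $(U_d\cup S_d,\,V(G)\setminus U_d)\in\mathcal{T}$ one may absorb the corresponding $U_d$'s into the small side of any $(X_i^+,Y_i^+)$ without changing its $\mathcal{T}$-orientation (this is an iterated version of your well-definedness lemma, using that moving $U_d$ across never changes the separator), and only then run the covering argument. Alternatively, and closer in spirit to what the present paper does elsewhere (see the ``piecing together'' paragraph in the proof of \Cref{thm_approxcore} and the construction in \Cref{lem_branchdecompositionviacutset}), you can avoid tangles entirely and glue branch decompositions of the torsos directly into one for $G$; the torso cliques on the adhesion sets guarantee that each adhesion set lies on one side of every cut in the torso's branch decomposition, which is exactly what makes the gluing go through with width $\max(c,\max_t \bw(G_t))=\max_t \bw(G_t)$.
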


\subsection{Finding cliques or grids}
We first establish the following function:
\begin{align*}
    \mathbf{b}_{\ref{thm_woodsquestion}}(t,k,g)   \coloneqq~& 4k + \max(g,0) (4k + (t-3)(t-4)\mathsf{width}_{\ref{thm_GMST}}(t)) \ + \\
     & \max(\mathsf{adhesion}_{\ref{thm_GMST}}(t), \mathsf{apex}_{\ref{thm_GMST}}(t)) + \nicefrac{1}{2}\mathsf{width}_{\ref{thm_GMST}}(t)(t-3)(t-4) .
\end{align*}
Additionally, recall the following classic theorem on the Euler genus of $K_t$.
\begin{proposition}[Ringel and Young \cite{RingelY1968Solution,Ringel1974Map}]\label{prop_ktgenus}
    If $t \geq 3$ is an integer, the Euler genus of $K_t$ is
    \[ \left\lceil \frac{(t-3)(t-4)}{6} \right\rceil . \]
\end{proposition}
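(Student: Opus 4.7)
The plan is to establish the equality by proving matching lower and upper bounds, treating them separately since they require completely different techniques.

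For the lower bound, I would invoke Euler's formula applied to an arbitrary 2-cell embedding of $K_t$ into a surface of Euler genus $g$. Setting $V = t$, $E = \binom{t}{2}$, and using that every face of a simple graph embedding is bounded by at least three edges (so $2E \geq 3F$), Euler's formula $V - E + F = 2 - g$ rearranges to
\[
g \ \geq \ E - V + 2 - F \ \geq \ \binom{t}{2} - t + 2 - \tfrac{2}{3}\binom{t}{2} \ = \ \tfrac{(t-3)(t-4)}{6}.
\]
Since $g$ is an integer, this yields $g \geq \lceil (t-3)(t-4)/6 \rceil$. A minor subtlety is reducing from arbitrary embeddings to 2-cell embeddings: if the embedding is not 2-cellular one passes to the minimal-genus surface into which $K_t$ embeds 2-cellularly, using that non-2-cell embeddings can only waste genus.

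For the upper bound, the goal is to exhibit, for each $t \geq 3$, an actual embedding of $K_t$ into a surface whose Euler genus is exactly $\lceil (t-3)(t-4)/6 \rceil$. Note that equality in the Euler calculation forces the embedding to be \emph{triangular}, i.e.\ every face is a 3-cycle. Constructing such embeddings is the deep part of the theorem; it was carried out by Ringel, Youngs, and their collaborators in a long series of papers resolving the Heawood map colouring problem, and depends on the residue of $t$ modulo $12$. The standard strategy splits into twelve cases indexed by $t \bmod 12$; in each case one builds a triangular embedding using the machinery of \emph{current graphs} (and in some residues, auxiliary ``additional adjacency'' or ``regular'' constructions). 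A current graph is a small embedded graph whose edges carry labels in a finite group such that the derived covering produces the desired rotation system around each vertex of $K_t$ with every face a triangle. The hardest residues historically were the non-orientable cases and the ``exceptional'' small values, which required ad hoc constructions rather than a uniform family.

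The main obstacle is plainly the upper bound: producing twelve families of explicit triangular embeddings is a substantial combinatorial construction that I would not attempt to reproduce from scratch here, and for the purposes of this paper it suffices to invoke the Ringel--Youngs theorem as a black box. In the proof proposal I would therefore give the Euler-formula lower bound in full detail and cite \cite{RingelY1968Solution,Ringel1974Map} for the matching construction, noting that the weaker inequality $\mathsf{eg}(K_t) = O(t^2)$, which is all that is used in the sequel, already follows from any reasonably efficient embedding scheme (e.g.\ embedding $K_t$ in a surface obtained by adding $O(t^2)$ handles to the sphere one edge-crossing at a time).
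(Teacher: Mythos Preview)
The paper does not prove this proposition at all; it is simply stated with a citation to \cite{RingelY1968Solution,Ringel1974Map} and used as a black box (only the consequence $\mathsf{eg}(K_t)\in\Ocal(t^2)$ is ever needed). Your outline of the classical argument is accurate---the Euler-formula lower bound is routine and the matching triangular embeddings via current graphs in twelve residue classes modulo $12$ are the substance of the Ringel--Youngs work---but none of this is expected or reproduced in the paper, and you yourself correctly conclude that citing the result is the appropriate move here.
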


Using \zcref{thm_GMST} and \zcref{prop_ktgenus}, we can conclude that $\mathbf{b}_{\ref{thm_woodsquestion}}(t,k, \mathsf{eg}(K_t)) \in \Ocal(t^2k + t^{2304})$.

Moving on to the proof of our main theorem, we note that we phrase this theorem in terms of $H$-minors instead of fixing $H$ to be $K_t$ to get a slightly more general result.

\begin{theorem}\label{thm_woodsquestion}
    Let $k$ be a positive integer, let $H$ be a graph with $t \coloneqq |V(H)|$, and let $G$ be a graph $\bw(G) \geq \mathbf{b}_{\ref{thm_woodsquestion}}(t,k,\mathsf{eg}(H)-1)$.
    Then $G$ contains either $H$ or a $k$-grid as a minor.
\end{theorem}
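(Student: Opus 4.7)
The plan is to argue the contrapositive: assume $G$ excludes $H$ as a minor, and show that either $G$ contains a $k$-grid as a minor or $\bw(G) < \mathbf{b}_{\ref{thm_woodsquestion}}(t,k,\mathsf{eg}(H)-1)$. I first apply \Cref{thm_GMST} to $G$, obtaining a tree-decomposition $(T,\beta)$ of adhesion at most $\mathsf{adhesion}_{\ref{thm_GMST}}(t)$ whose torsos $G_x$ each carry a strong $(\mathsf{apex}_{\ref{thm_GMST}}(t),\nicefrac{1}{2}(t-3)(t-4),\mathsf{width}_{\ref{thm_GMST}}(t))$-near embedding in a surface $\Sigma_x$ of Euler genus $g'\leq \mathsf{eg}(H)-1$. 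Since $\mathbf{b}_{\ref{thm_woodsquestion}}$ dominates the adhesion term, \Cref{prop_findhighbwbag} reduces the task to bounding $\bw(G_x)$ for a single torso, with apex set $A$, rendition $\rho$, at most $b\leq \nicefrac{1}{2}(t-3)(t-4)$ vortices, each of width at most $w\leq \mathsf{width}_{\ref{thm_GMST}}(t)$.

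The core reduction is a surface-cutting loop on the $2$-cell-embedded graph $H_x$ obtained from the $\rho$-torso of $G_x-A$ by stripping away every vortex interior; by the definition of strong near-embedding, $H_x$ is a minor of $G_x-A$, hence a minor of $G$. At each iteration, if the current embedding has representativity at least $4k$, then \Cref{prop_gimmedagridfacewidth} produces a $k$-grid minor of $H_x$ and we are done. Otherwise I select a non-contractible curve that crosses fewer than $4k$ nodes and avoids every vortex cell (since vortex cells are open disks, any non-contractible homotopy class has a representative disjoint from them), promote these nodes to an auxiliary apex set $A'$, and cut along the curve. Each cut strictly decreases the Euler genus of the piece it acts on, so after at most $g'$ iterations every remaining component is $2$-cell-embedded in the sphere and $|A'|\leq 4kg'$. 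Applying \Cref{prop_gimmedagridbw} componentwise with $g=0$ either yields another $k$-grid minor of $G$ or caps the branchwidth of each planar component at $4k$. Re-introducing the vortices via \Cref{prop_spherewithoutvorticeshighbw}, and then reabsorbing $A\cup A'$ via \Cref{lem_branchdecompositionviacutset}, gives
\[ \bw(G_x) \;\leq\; 4k + \Ocal(wb) + |A| + 4kg' \;\leq\; \mathbf{b}_{\ref{thm_woodsquestion}}(t,k,g') \;\leq\; \mathbf{b}_{\ref{thm_woodsquestion}}(t,k,\mathsf{eg}(H)-1), \]
where the slack in $\mathbf{b}_{\ref{thm_woodsquestion}}$ (the per-cut factor $(t-3)(t-4)\mathsf{width}_{\ref{thm_GMST}}(t)$, the sphere-case term $\nicefrac{1}{2}\mathsf{width}_{\ref{thm_GMST}}(t)(t-3)(t-4)$, and the $\max$ with the adhesion) absorbs the constants produced by the argument.

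The main obstacle is ensuring each cutting step is compatible with the near-embedding structure of $\rho$: the non-contractible curves must be routed to miss all vortex cells, so that the vortex linear decompositions and the breadth bound $b$ survive the cut unchanged, and the rendition induced on each cut piece must continue to satisfy the hypotheses of \Cref{prop_spherewithoutvorticeshighbw}. Because vortex cells are disks, homotoping a curve off of them is always possible, but maintaining these rendition invariants through up to $g'$ successive cuts, and accurately accounting for the accumulated apex and vortex contributions, is the main combinatorial delicacy.
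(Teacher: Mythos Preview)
Your overall strategy matches the paper's proof closely: apply the GMST, reduce to a single torso via \Cref{prop_findhighbwbag}, strip the apices, and iteratively cut the surface along short non-contractible curves until the remaining pieces are spherical, then invoke \Cref{prop_spherewithoutvorticeshighbw}. The gap is in your handling of the vortices during the cutting step.

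You claim that a non-contractible curve witnessing representativity $<4k$ can be homotoped off every vortex cell while still crossing fewer than $4k$ nodes. Topologically, yes, a curve can be pushed off an open disk; but the boundary $N_\rho(c)$ of a vortex can contain arbitrarily many nodes, and rerouting the curve along that boundary may force it to cross all of them. There is no reason a minimum-length non-contractible curve in the embedding of $H_x$ already avoids the faces that used to be vortex cells. Hence the bound $|A'|\leq 4kg'$ is not justified, and with it the claimed inequality for $\bw(G_x)$.

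The paper repairs exactly this point. It works instead with the graph $B'$ obtained from $B$ by inserting a single placeholder vertex $v_c$ inside each vortex cell, adjacent to all of $N_\rho(c)$, and asks for representativity at least $4k+\nicefrac{1}{2}(t-3)(t-4)$ in $B'$. If this fails, the witnessing curve may pass through some $v_c$; for each such vortex the curve splits $\Omega_c$ into two segments, and the linear decomposition (of depth at most $\mathsf{width}_{\ref{thm_GMST}}(t)$) yields a separator of size at most $2\,\mathsf{width}_{\ref{thm_GMST}}(t)$ that cuts the vortex accordingly. The total deletion set per iteration is therefore at most $4k+(t-3)(t-4)\,\mathsf{width}_{\ref{thm_GMST}}(t)$, and the renditions on the two resulting surfaces inherit the same breadth and depth bounds. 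This ``per-cut factor $(t-3)(t-4)\mathsf{width}_{\ref{thm_GMST}}(t)$'' that you relegated to slack is not slack at all; it is the mandatory cost of threading the cuts through the vortices, and it must be charged to $|A'|$ at every iteration.
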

\begin{proof}
    We begin by applying \zcref{thm_GMST}.
    If this yields a $H$-minor model we are done.
    Thus we may assume that we instead find a tree-decomposition $(T,\beta)$ for $G$ of adhesion at most $\mathsf{adhesion}_{\ref{thm_GMST}}(t)$ such that for every $x \in V(T)$ the torso $G_x$ of $\beta(x)$ has a strong $(\mathsf{apex}_{\ref{thm_GMST}}(t),\nicefrac{1}{2}(t-3)(t-4),\mathsf{width}_{\ref{thm_GMST}}(t))$-near embedding, with the apex set $A$, into a surface where $H$ does not embed such that $G_x - A$ is a minor of $G$.
    Note that $\mathbf{b}_{\ref{thm_woodsquestion}}(t,k) > \mathsf{adhesion}_{\ref{thm_GMST}}(t)$ and thus, according to \zcref{prop_findhighbwbag}, there exists some $t \in V(T)$ such that the torso $G_t$ of $\beta(t)$ has branchwidth at least $\mathbf{b}_{\ref{thm_woodsquestion}}(t,k)$.
    
    Let $A_t$ be the apex set belonging to $t$, let $G_t' \coloneqq G_t - A_t$, and let $\rho$ be the rendition of $G_t'$ into a surface $\Sigma$ into which $H$ does not embed that is associated with the strong near embedding of $G_t$.
    This in particular means that the genus of $\Sigma$ is below $\mathsf{eg}(H)$.
    As a consequence of \zcref{lem_branchdecompositionviacutset} and $|A_t| \leq \mathsf{apex}_{\ref{thm_GMST}}(t)$, we know that $G_t'$ still has branchwidth at least $\mathbf{b}_{\ref{thm_woodsquestion}}(t,k) - \mathsf{apex}_{\ref{thm_GMST}}(t)$.
    Let $B$ be the result of deleting $V(\sigma_{\rho}(c)) \setminus N_{\rho}(c)$ from $G_t'$ for each vortex $c \in C(\rho)$.
    We note that according to the definition of a strong near embedding $B$ is a minor of $G_t - A$, and thus in particular $G$, such that each of its components has a 2-cell-embedding in a surface into which $H$ does not embed.

    We construct $B'$ by adding a vertex $v_c$ for each vortex $c \in C(\rho)$ and making $v_c$ adjacent to all vertices in $N(c)$.
    This immediately yields a 2-cell-embedding $\phi$ of $B'$ into $\Sigma$, which can be chosen such that $v_c$ is the sole vertex in the interior of $c$ and the vertices of $N(c)$ are the only vertices found on the boundary of $c$ for all vortices $c \in C(\rho)$.
    Let $U \coloneqq V(B') \setminus V(B)$.
    Since $\rho$ has at most $\nicefrac{1}{2}(t-3)(t-4)$ vortices, we have $|U| \leq \nicefrac{1}{2}(t-3)(t-4)$.

    Suppose that $\Sigma$ is the sphere.
    Using \zcref{prop_spherewithoutvorticeshighbw} we note that $B$ still has branchwidth at least $\mathbf{b}_{\ref{thm_woodsquestion}}(t,k) - (\mathsf{apex}_{\ref{thm_GMST}}(t) + \nicefrac{1}{2}\mathsf{width}_{\ref{thm_GMST}}(t)(t-3)(t-4) )$, which is at least $4k$.
    Thus $B$ contains a $k$-grid minor according to \zcref{thm_planargrid} and $G$ also contains the same $k$-grid minor, since $B$ is a minor of $G$.

    We may therefore suppose that $\Sigma$ has positive genus.
    Suppose that $B'$ has representativity at least $4k + \nicefrac{1}{2}(t-3)(t-4)$, then $B$ must have representativity at least $4k$ and thus contain a $k$-grid according to \zcref{prop_gimmedagridfacewidth}.
    We may therefore let $\gamma$ be a non-contractible curve $\gamma$ in $\Sigma$ that intersects $\phi$ only in vertices, with $V(\gamma)$ being the set of these vertices, and $|V(\gamma)| < 4k + \nicefrac{1}{2}(t-3)(t-4)$.

    We can assume that the restriction of $\gamma$ to any vortex $c \in V(\rho)$ has one non-trivial component $\gamma'$ which is a curve in $c$ that contains $v_c \in U$ and ends on two distinct points $u^c_\gamma, w^c_\gamma$ in $\bd(c)$.
    In particular this means that for each vortex $c \in V(\rho)$ such that $v_c \in U$, if we let $(G_c,\Omega_c)$ be the vortex society of $c$ in $\rho$, the points $u^c_\gamma, w^c_\gamma$ naturally induce a partition of $\Omega_c$ into two segments $I_c^1,I_c^2$.
    Since $\rho$ has depth at most $\mathsf{width}_{\ref{thm_GMST}}$, the vortex $c$ therefore also has depth at most $\mathsf{width}_{\ref{thm_GMST}}$, which means that there exists an $I_c^1$-$I_c^2$-separator $S_c$ of order $2\mathsf{width}_{\ref{thm_GMST}}$ in $\sigma_{\rho}(c)$.
    If $c \in V(C)$ is not a vortex or $v_c \not\in U$, we set $S_c \coloneqq \emptyset$.
    Furthermore, we set $S \coloneqq (V(\gamma) \setminus U) \cup \bigcup_{c \in C(\rho)} S_c$.
    Since the breadth of $\rho$ is at most $\nicefrac{1}{2}(t-3)(t-4)$, we conclude that $|S| \leq 4k + (t-3)(t-4)\mathsf{width}_{\ref{thm_GMST}}$.
    
    Clearly, since $\gamma$ is non-contractible, the closure of the components of $\Sigma - \gamma$ consists of two surfaces $\Sigma_1, \Sigma_2$, each having $\gamma$ in their boundary.
    We now construct renditions for these two surfaces based on $\rho$ that allow us to determine which of the parts of $G_t' - S$ we should continue with. 

    For each $c \in V(\rho)$ such that $v_c \in V(\gamma)$, we add $c$ to the surfaces $\Sigma_1$ and $\Sigma_2$.
    We call the resulting surfaces $\Sigma_1'$ and $\Sigma_2'$.
    Now we take the restriction $\phi_i$ of $\phi$ to $\Sigma_i$ for each $i \in [2]$, chosen such that the vertices of $I_c^i$ are drawn by $\phi_i$ for each vortex $c \in C(\rho)$ with $v_c \in U$, and derive a vortex-free rendition $\rho_i$ from this in the natural way by enclosing each edge in a disk.
    For both $i \in [2]$, we add to $\rho_i$ all vortices $c \in C(\rho)$ with $c \subseteq \Sigma_i$.
    The graph $G_i$ that is to be embedded by $\rho_i$ is then defined as the union of the component of $B - S$ that is embedded by $\phi_i$ on $\Sigma_i$ and, for each vortex $c \in C(\rho_i)$, all components of $\sigma_{\rho}(c) - S_c$ that contain a vertex of $I_c^i$.

    Observe that $\rho_i$ has breadth at most $\nicefrac{1}{2}(t-3)(t-4)$ and depth at most $\mathsf{width}_{\ref{thm_GMST}}$ for both $i \in [2]$.
    Furthermore, $\Sigma_1$ and $\Sigma_2$ both have lesser genus than that of $\Sigma$.
    Thanks to \zcref{lem_branchdecompositionviacutset}, we also know that one of the two graphs $G_1$ and $G_2$ still has branchwidth at least 
    \[ \mathbf{b}_{\ref{thm_woodsquestion}}(t,k) - (\mathsf{apex}_{\ref{thm_GMST}}(t) + \nicefrac{1}{2}\mathsf{width}_{\ref{thm_GMST}}(t)(t-3)(t-4) + 4k + (t-3)(t-4)\mathsf{width}_{\ref{thm_GMST}}(t)) . \]
    We may now repeat this procedure by picking the graph among $G_1,G_2$ with higher branchwidth and again finishing up our proof if the surface they are embedded on is the sphere (with a few holes) or alternatively searching for non-contractible curves and reducing further.
    Since each of these steps reduces the genus of the involved surfaces further, we must finish this process after at most $\mathsf{eg}(H) - 1$ iterations, leaving us with a graph of branchwidth at least
    \begin{align*}
        \mathbf{b}_{\ref{thm_woodsquestion}}(t,k) - (\mathsf{apex}_{\ref{thm_GMST}}(t) + \nicefrac{1}{2}\mathsf{width}_{\ref{thm_GMST}}(t)(t-3)(t-4) + (\mathsf{eg}(H) - 1) (4k + (t-3)(t-4)\mathsf{width}_{\ref{thm_GMST}}(t)))
    \end{align*}
    that is embedded in the sphere (with some holes).
    Since this value is at least $4k$, this graph thus contains a $k$-grid according to \zcref{prop_gimmedagridbw} and thus we are done.
\end{proof}

Furthermore, by analysing our proof, we notice that we always find our grid in a part of the graph that is embedded on some surface.
This allows us, in the absence of a $K_t$-minor, to find the $k$-grid not only as a minor, but as an induced minor, if we double the value of $k$ for which we are searching and subsequently ``sacrifice'' half of the $2k$-grid minor we find to ensure that the result is an induced $k$-grid.
Thus, as a consequence of our efforts, we also get a proof of \zcref{thm_inducedgrid}.

If we want to turn this proof into an algorithm, we only need to be able to find the $k$-grid algorithmically, since \zcref{thm_GMST} already provides us with an $H$-minor model if it finds one.
We may again employ \zcref{lem_make_con_genus} to find our grid, once more at the cost of a somewhat worse function, which we give as follows:
\begin{align*}
    \mathbf{b}_{\ref{thm_woodsquestion_algo}}(t,k,g)   \coloneqq~& k + \max(g,0) (\nicefrac{3}{2}c_{\ref{lem_make_con_genus}}kg^{\nicefrac{5}{2}} + (t-3)(t-4)\mathsf{width}_{\ref{thm_GMST}}(t)) \ + \\
     & \max(\mathsf{adhesion}_{\ref{thm_GMST}}(t), \mathsf{apex}_{\ref{thm_GMST}}(t)) + \nicefrac{1}{2}\mathsf{width}_{\ref{thm_GMST}}(t)(t-3)(t-4) .
\end{align*}

Depending on whether we use \zcref{thm_GMST} or \zcref{thm_GMST_randomised}, our proof of \zcref{thm_woodsquestion} can thus be slightly adjusted to yield.

\begin{theorem}\label{thm_woodsquestion_algo}
    Let $k$ and $t$ be positive integers, let $H$ be a graph with $|V(H)| = t$, and let $G$ be a graph with branchwidth at least $\mathbf{b}_{\ref{thm_woodsquestion_algo}}(t,k,\mathsf{eg}(H)-1)$.
    There exists an algorithm running in time $2^{\poly(t)}kn^3m \log n$, that yields a minor model for $H$ or for a $k$-grid in $G$.
\end{theorem}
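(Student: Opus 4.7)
The plan is to follow the combinatorial proof of \Cref{thm_woodsquestion} almost verbatim, replacing each existential step with its algorithmic counterpart. First, I would run the algorithm of \Cref{thm_GMST} on input $(H,G)$. Within the stated time budget of $2^{\poly(t)}n^3 m \log n$ this either returns an $H$-minor model (in which case we output it and halt) or produces a tree-decomposition $(T,\beta)$ of adhesion at most $\mathsf{adhesion}_{\ref{thm_GMST}}(t)$ whose torsos $G_x$ carry strong near embeddings. Since $\mathbf{b}_{\ref{thm_woodsquestion_algo}}(t,k,\eg(H)-1)$ strictly exceeds $\mathsf{adhesion}_{\ref{thm_GMST}}(t)$, \cref{prop_findhighbwbag} guarantees some bag $x$ whose torso $G_x$ has branchwidth at least $\mathbf{b}_{\ref{thm_woodsquestion_algo}}(t,k,\eg(H)-1)$. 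To locate such a bag constructively, I would apply the \FPT-algorithm of \cref{prop_fptbw} to each torso, stopping as soon as we verify a torso with sufficient branchwidth; this is affordable because we only need to distinguish ``at least $\mathbf{b}_{\ref{thm_woodsquestion_algo}}$'' (at which point we keep going) from ``less'' (at which point we move on), and we are spending time bounded by the GMST step already.

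Having fixed a bag $x$ with the accompanying apex set $A$, rendition $\rho$, and surface $\Sigma$, I would build the graph $B'$ by adding a ``dummy'' vertex $v_c$ adjacent to every node on the boundary of each vortex $c$, exactly as in the proof of \Cref{thm_woodsquestion}. This $B'$ inherits an explicit $2$-cell embedding $\phi$ in $\Sigma$ directly from $\rho$, so no call to \cref{prop_findembedding} is needed. If $\Sigma$ is the sphere, then by \cref{prop_spherewithoutvorticeshighbw} the subgraph $B$ still has branchwidth at least $\mathbf{b}_{\ref{thm_woodsquestion_algo}}(t,k,0)-\poly(t)$; in particular, invoking \cref{lem_make_con_genus} on $B$ (which is planar and, via \cref{prop:twbwequiv}, has treewidth at least $c_{\ref{lem_make_con_genus}}k$ by our choice of $\mathbf{b}_{\ref{thm_woodsquestion_algo}}$) produces a $k$-grid minor model of $B$ in $\Ocal(n^2)$ time, and this lifts to a $k$-grid minor model of $G$ since $B$ is a minor of $G$ by the strong-embedding property.

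If $\Sigma$ has positive genus, I would first check the representativity of $\phi$ using the algorithm of \cref{prop_findnoncontrcurve}. If the representativity is at least $\nicefrac{3}{2} c_{\ref{lem_make_con_genus}} k g^{\nicefrac{5}{2}} + \nicefrac{1}{2}(t-3)(t-4)$, then the restriction $B$ has representativity large enough to make \cref{prop_highrepresentativitygivestangle,prop_tanglebranchwidthduality,prop:twbwequiv} force the treewidth of $B$ past the threshold of \cref{lem_make_con_genus}, which then outputs a $k$-grid minor model in time $\Ocal(n^2)$. Otherwise, the same call to \cref{prop_findnoncontrcurve} hands us an explicit shortest non-contractible curve $\gamma$, and for each vortex $c$ crossed by $\gamma$ we compute a separator $S_c$ of the corresponding vortex society by standard max-flow on the linear decomposition stored with $\rho$ (whose width is at most $\mathsf{width}_{\ref{thm_GMST}}(t)$). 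Removing $S := (V(\gamma)\setminus U)\cup \bigcup_c S_c$ splits $G_x-A$ along $\gamma$ into two graphs $G_1,G_2$ embedded in surfaces of strictly smaller genus, whose renditions are read off from $\rho$ explicitly; by \cref{lem_branchdecompositionviacutset} the larger-branchwidth side still clears the threshold $\mathbf{b}_{\ref{thm_woodsquestion_algo}}(t,k,\eg(H)-2)$, and we recurse on it.

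The recursion halts within $\eg(H)-1 \le \Ocal(t^2)$ levels, since each level strictly decreases the genus of the working surface. The dominant cost per level is the single $\Ocal(n^2)$ invocation of \cref{lem_make_con_genus}, the linear-time call to \cref{prop_findnoncontrcurve}, and some book-keeping on vortex societies; summed over levels this is absorbed into the $2^{\poly(t)}n^3 m\log n$ budget already paid for the GMST step. The main obstacle I anticipate is the step of explicitly producing the minor model for the $k$-grid after cutting the surface: one must check that the un-contraction operation in \cref{lem_make_con_genus} is compatible with the restriction of $\rho$ to the post-cut surface, which requires keeping careful track of how vortex-societies are truncated by the separators $S_c$ so that the resulting disk-contained grid pulls back, through the strong near embedding, to an actual minor model of the grid in $G$ itself.
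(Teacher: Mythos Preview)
Your overall plan coincides with the paper's: algorithmise the proof of \Cref{thm_woodsquestion} by running the algorithm of \Cref{thm_GMST}, then for each torso repeatedly cut along short non-contractible curves, and, once the remaining surface is ``flat enough'', extract the grid model via \Cref{lem_make_con_genus}.  The paper itself states just this (``our proof of \Cref{thm_woodsquestion} can thus be slightly adjusted'') without further detail, so your write-up is in fact more explicit than the paper's.

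There is, however, a genuine gap in the runtime analysis.  You invoke \Cref{prop_fptbw} twice: first to locate a bag $x$ with $\bw(G_x) \geq \mathbf{b}_{\ref{thm_woodsquestion_algo}}(t,k,\eg(H)-1)$, and then implicitly again when you ``recurse on the larger-branchwidth side''.  The algorithm of \Cref{prop_fptbw} runs in time $2^{\Ocal(\bw^3)}n$, and the branchwidth threshold you are testing against is $\mathbf{b}_{\ref{thm_woodsquestion_algo}}(t,k,\eg(H)-1)$, which grows polynomially in $k$.  Hence each such call costs $2^{\Omega(k^3)}n$, which is \emph{not} bounded by the claimed $2^{\poly(t)} k\, n^3 m \log n$; the dependence on $k$ must stay polynomial.

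The fix is simple and does not change the structure of your argument: do not try to identify the ``right'' bag or the ``right'' side at all.  Run the cutting-and-grid-finding routine on every torso of $(T,\beta)$ (there are $\Ocal(n)$ of them), and after each cut recurse on \emph{both} pieces.  Since each cut strictly lowers the total genus, the recursion on any one torso produces at most $\eg(H)$ pieces, all planar at the leaves; across all torsos this stays well within the stated time budget.  The hypothesis $\bw(G) \geq \mathbf{b}_{\ref{thm_woodsquestion_algo}}(t,k,\eg(H)-1)$ together with \Cref{prop_findhighbwbag} and \Cref{lem_branchdecompositionviacutset} guarantees that \emph{some} leaf of this recursion satisfies the hypothesis of \Cref{lem_make_con_genus} and therefore yields a $k$-grid model; you simply output the first one you find.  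Your anticipated obstacle (lifting the grid from $B$ back to $G$) is not an issue: every leaf graph is a subgraph of the corresponding $\rho$-torso $B$, and by the strong-near-embedding clause of \Cref{thm_GMST} this $B$ is a minor of $G$, so any minor model found there transfers directly.
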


We forgo stating the explicit function for the case in which we apply \zcref{thm_GMST_randomised}.

\begin{theorem}\label{thm_woodsquestion_algo_rando}
    There exists some function $\mathbf{b}_{\ref{thm_woodsquestion_algo_rando}} \colon \mathbb{N}^3 \to \mathbb{N}$ with $\mathbf{b}_{\ref{thm_woodsquestion_algo}}(t,k,g) \in \mathcal{O}( g^{\nicefrac{7}{2}}k + gt^{11502})$ and an algorithm that, for any positive integer $k$, any graph $H$ with $t \coloneqq |V(H)|$, and any graph $G$ with $\bw(G) \geq \mathbf{b}_{\ref{thm_woodsquestion_algo_rando}}(t,k,\mathsf{eg}(H)-1)$, finds a minor model for $H$ or for a $k$-grid in $G$ with high probability in time $(t+n)^{\Ocal(1)}$.
\end{theorem}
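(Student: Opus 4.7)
The plan is to mimic the proof of \Cref{thm_woodsquestion_algo} essentially verbatim, but with every invocation of the deterministic GMST (\Cref{thm_GMST}) replaced by an invocation of its randomized polynomial-time variant (\Cref{thm_GMST_randomised}). All subsequent ingredients used in the proof of \Cref{thm_woodsquestion_algo}---the block decomposition of \Cref{lem_branchwidthofblocks}, the apex-set additivity of \Cref{lem_branchdecompositionviacutset}, the bag-selection via \Cref{prop_findhighbwbag}, the surface-cutting along shortest non-contractible curves from \Cref{prop_findnoncontrcurve}, the removal of vortex material via the separators guaranteed by the bounded depth of the rendition, and the grid-finding algorithm of \Cref{lem_make_con_genus} applied on the resulting spherical remnants---are all already deterministic and polynomial, so none of them need to be revisited.

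Concretely, given $k$, $H$ with $t = |V(H)|$, and $G$ with $\bw(G) \geq \mathbf{b}_{\ref{thm_woodsquestion_algo_rando}}(t,k,\eg(H)-1)$, I first run \Cref{thm_GMST_randomised} on $(G,H)$, which in $(t+n)^{\Ocal(1)}$ time either outputs an $H$-minor model (and we are done) or, with high probability, a tree-decomposition $(T,\beta)$ of adhesion $\Ocal(t^{11500})$ whose torsos admit strong $(\Ocal(t^{11500}),\nicefrac12(t-3)(t-4),\Ocal(t^{11500}))$-near embeddings in surfaces excluding $H$, with the apex-free part of each torso being a minor of $G$. Using \Cref{prop_findhighbwbag}, one locates a bag $t^\star$ whose torso $G_{t^\star}$ still has large branchwidth; stripping off its apex set and then the contents of the vortices (absorbing the loss via \Cref{lem_branchdecompositionviacutset,prop_spherewithoutvorticeshighbw}) yields a graph $B$ that is a minor of $G$ and is 2-cell-embedded in a surface $\Sigma$ of Euler genus at most $\eg(H)-1$. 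Then one iteratively cuts $\Sigma$ along shortest non-contractible curves (via \Cref{prop_findnoncontrcurve}), each time using the bounded depth of the vortices to replace the curve by a vertex cut of size $\Ocal(t^{11500})$ and decrease the genus. After at most $\eg(H)-1$ iterations, one is left with a minor $B'$ of $G$ that is embedded in the sphere and whose branchwidth, combined with \Cref{prop:twbwequiv}, yields treewidth at least $c_{\ref{lem_make_con_genus}} \cdot g^{\nicefrac52} \cdot k$; applying \Cref{lem_make_con_genus} (trivially, since $B'$ is planar) then produces the desired $k$-grid minor of $G$.

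The function $\mathbf{b}_{\ref{thm_woodsquestion_algo_rando}}$ is defined exactly as $\mathbf{b}_{\ref{thm_woodsquestion_algo}}$, but with $\mathsf{width}_{\ref{thm_GMST}}$, $\mathsf{apex}_{\ref{thm_GMST}}$, $\mathsf{adhesion}_{\ref{thm_GMST}}$ replaced by their counterparts from \Cref{thm_GMST_randomised}. Tracking the losses exactly as in the deterministic proof: cutting along up to $\eg(H)-1$ non-contractible curves costs $\eg(H) \cdot \Ocal(\eg(H)^{\nicefrac52} k)$ in branchwidth from the grid-finding step plus $\eg(H) \cdot (t-3)(t-4) \cdot \mathsf{width}_{\ref{thm_GMST_randomised}}(t)$ from the vortex separators, while the initial apex/vortex removal contributes $\mathsf{apex}_{\ref{thm_GMST_randomised}}(t) + \nicefrac12(t-3)(t-4) \mathsf{width}_{\ref{thm_GMST_randomised}}(t)$. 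Since $\mathsf{width}_{\ref{thm_GMST_randomised}}(t),\mathsf{apex}_{\ref{thm_GMST_randomised}}(t),\mathsf{adhesion}_{\ref{thm_GMST_randomised}}(t) \in \Ocal(t^{11500})$ and $(t-3)(t-4) \in \Ocal(t^2)$, the resulting threshold is $\Ocal(g^{\nicefrac72} k + g \cdot t^{11502})$, as claimed.

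There is no real obstacle: the randomized GMST is designed as a drop-in replacement, and the only verification needed is that every structural consequence used of the deterministic GMST in the proof of \Cref{thm_woodsquestion_algo}---namely, existence of a strong near embedding in a surface excluding $H$, the apex-free part being a minor of $G$, and the polynomial bounds on apex count, adhesion, and vortex depth---is preserved verbatim in the randomized variant. The only subtle point is that since the randomized GMST succeeds only with high probability, the overall algorithm's correctness guarantee is likewise ``with high probability'', and the total runtime remains $(t+n)^{\Ocal(1)}$ because cutting along curves and applying \Cref{lem_make_con_genus} both run in polynomial time in $t$ and $n$.
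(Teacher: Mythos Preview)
Your proposal is correct and takes essentially the same approach as the paper, which explicitly states that \Cref{thm_woodsquestion_algo_rando} follows by re-running the proof of \Cref{thm_woodsquestion} (and its algorithmic refinement via \Cref{lem_make_con_genus}) with \Cref{thm_GMST_randomised} substituted for \Cref{thm_GMST}. The one imprecision---invoking \Cref{lem_make_con_genus} ``trivially'' on the final planar remnant $B'$---is harmless, since in the sphere case the grid is actually extracted via \Cref{prop_aprox_alg} (which \Cref{lem_make_con_genus} wraps), and the paper itself glosses over this detail in exactly the same way.
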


\section{Approximating the branchwidth of \texorpdfstring{$H$}{H}-minor-free graphs}\label{sec_Hminorfree}
We now use the insights gained in the previous section to show that we can approximate both the branchwidth of $H$-minor-free graphs and the maximum size of a grid in $H$-minor-free graphs.

For the proof of the main result of this section, we again establish a function, by setting
\begin{align*}
    \mathbf{b}_{\ref{thm_approxcore}}(t,k,g)    \coloneqq~& k + (t-3)(t-4)\mathsf{width}_{\ref{thm_GMST}}(t) + 3(t-3)(t-4) \ + \\
                                                & \max(g,0)(k + (t-3)(t-4)\mathsf{width}_{\ref{thm_GMST}}(t)) + \mathsf{apex}_{\ref{thm_GMST}}(t) .
\end{align*}
As a consequence we have $\mathbf{b}_{\ref{thm_approxcore}}(t,k,g) \in \Ocal( gk + gt^{2302} )$.
We remark that in this proof we are implicitly building \emph{surface cut decompositions} (see \cite{RueST14Dynamic}) for the graphs in each of the bags.
To combine them to a branch-decomposition of the entire graph we however need to deform these decompositions further.

\begin{theorem}\label{thm_approxcore}
    Let $k$ be an integer, let $H$ be a graph with $t \coloneqq |V(H)|$, and let $G$ be a graph that does not contain $H$ as a minor.
    There exists an algorithm that determines whether $G$ has branch-width at least $k$ or branchwidth at most $\mathbf{b}_{\ref{thm_approxcore}}(t,k,\eg(H)-1)$ in time $2^{\poly(t)}kn^3m \log n$.

    In particular, in the second outcome, the algorithm returns a branch-decomposition of $G$ with width at most $\mathbf{b}_{\ref{thm_approxcore}}(t,k,\eg(H)-1)$.
\end{theorem}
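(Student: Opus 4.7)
The plan is to decompose $G$ via the variant \Cref{thm_GMST} of the Graph Minor Structure Theorem, approximate branchwidth torso-by-torso using the surface-cutting machinery of \Cref{thm_boundedgenusrats} combined with the vortex-reattachment of \Cref{thm_spherewithoutvorticeshighbw}, and finally stitch the per-torso branch-decompositions along the tree-decomposition. First, I invoke \Cref{thm_GMST} on $(G,H)$: since $G$ is $H$-minor-free, this returns in time $2^{\poly(t)}n^{3}m\log n$ a tree-decomposition $(T,\beta)$ of adhesion at most $\mathsf{adhesion}_{\ref{thm_GMST}}(t)$ whose torsos $G_{x}$ each admit a strong near embedding into a surface $\Sigma_{x}$ of Euler genus at most $\eg(H)-1$, with apex set $A_{x}$ of size at most $\mathsf{apex}_{\ref{thm_GMST}}(t)$, at most $\nicefrac{1}{2}(t-3)(t-4)$ vortices each of linear-decomposition width at most $\mathsf{width}_{\ref{thm_GMST}}(t)$, and such that the ``vortex-free'' part of $G_{x}-A_{x}$ is a minor of $G$.

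For each bag $x$ I produce a branch-decomposition of $G_{x}$ in three stages. Let $B_{x}$ be obtained from $G_{x}-A_{x}$ by deleting $V(\sigma(c))\setminus N(c)$ for every vortex $c$; then $B_{x}$ is 2-cell-embedded in a surface of genus at most $\eg(H)-1$ and is a minor of $G$. I run the iterative curve-cutting procedure from the proof of \Cref{thm_boundedgenusrats} on $B_{x}$: at each step \Cref{prop_findnoncontrcurve} yields a non-contractible curve intersecting the minimum number of vertices; if that number is at least $k$, then \Cref{prop_highrepresentativitygivestangle,prop_tanglebranchwidthduality} certify $\bw(B_{x})\geq k$, which lifts to $\bw(G)\geq k$ and I report this. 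Otherwise I delete those fewer than $k$ vertices and strictly reduce the genus; after at most $\eg(H)-1$ iterations the residual graph is planar, so either \Cref{prop_catchrat} gives $\bw\geq k$ (whence $\bw(G)\geq k$) or \Cref{prop_catchratbranchdecomp,prop:findscdecomp} produce a sphere-cut decomposition of width at most $k$. I then apply the constructive \Cref{thm_spherewithoutvorticeshighbw} to reattach the vortices of $G_{x}-A_{x}$, adding at most $\mathsf{width}_{\ref{thm_GMST}}(t)(t-3)(t-4)+2(t-3)(t-4)$ to the width. Finally, two applications of \Cref{lem_branchdecompositionviacutset} reinsert (i) the at most $(\eg(H)-1)(k-1)$ vertices removed during the genus-reduction loop and (ii) the apex set $A_{x}$, each contributing only additively. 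Summing gives a branch-decomposition of $G_{x}$ of width at most $\mathbf{b}_{\ref{thm_approxcore}}(t,k,\eg(H)-1)$.

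To assemble the torso decompositions into one for $G$, I traverse $T$ and glue along the adhesion sets $\beta(x)\cap\beta(y)$, each of size at most $\mathsf{adhesion}_{\ref{thm_GMST}}(t)$; an argument in the spirit of \Cref{lem_branchwidthofblocks} and \Cref{prop_findhighbwbag} then yields a branch-decomposition of $G$ whose width equals the maximum over per-torso widths (the adhesion contribution already being absorbed into $\mathbf{b}_{\ref{thm_approxcore}}$). If any per-torso call reported $\bw(G)\geq k$, this is returned as the overall answer. The runtime is dominated by \Cref{thm_GMST}, giving the claimed $2^{\poly(t)}kn^{3}m\log n$. The main obstacle is the bookkeeping across the genus-reduction loop: each minimum-intersection non-contractible curve will generally pass through vortex cells, and, as in the proof of \Cref{thm_woodsquestion}, one must replace the vortex portion of the curve by an internal separator of size at most $2\mathsf{width}_{\ref{thm_GMST}}(t)$ and then verify that the vortices remaining after the surgery still sit as cells of a valid rendition of the shrunken surface, so that \Cref{thm_spherewithoutvorticeshighbw} is applicable at the end and the contributions from apices, curve-vertices, vortex separators, and the planar sphere-cut step combine additively rather than multiplicatively.
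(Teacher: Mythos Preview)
Your plan matches the paper's proof almost step for step: apply the GMST variant, cut each torso's surface along short non-contractible curves while replacing any vortex segment of the curve by a small internal separator, reach a sphere-rendition, invoke \Cref{thm_spherewithoutvorticeshighbw}, and then restore the apex set and the accumulated cut-set via \Cref{lem_branchdecompositionviacutset}.

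Two places need tightening. First, your count ``at most $(\eg(H)-1)(k-1)$ vertices removed during the genus-reduction loop'' is inconsistent with your own closing paragraph: once the vortex separators (each of size at most $2\,\mathsf{width}_{\ref{thm_GMST}}(t)$, with up to $\nicefrac{1}{2}(t-3)(t-4)$ vortices hit per cut) are included, each iteration removes up to $k+(t-3)(t-4)\,\mathsf{width}_{\ref{thm_GMST}}(t)$ vertices, and the total over $g$ iterations is precisely the $\max(g,0)\bigl(k+(t-3)(t-4)\,\mathsf{width}_{\ref{thm_GMST}}(t)\bigr)$ summand of $\mathbf{b}_{\ref{thm_approxcore}}$.

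Second, and more substantively, the final gluing along $(T,\beta)$ is not a routine variant of \Cref{lem_branchwidthofblocks}, and \Cref{prop_findhighbwbag} is only an existential statement about $\bw$, not a construction. The paper spends an entire subsection on this step, exploiting the structural fact (coming out of the proof of \Cref{thm_GMST_induction}) that every adhesion set $\beta(t)\cap\beta(t_i)$ lies in one of three places relative to the near-embedding of $G_t$: inside the apex set $A$, inside $A\cup X^c_j$ for a single bag of some vortex's linear decomposition, or inside $A\cup N_\rho(c)$ for a non-vortex cell with $|N_\rho(c)|\leq 3$. In each case the child's branch-decomposition is grafted at a specifically chosen leaf of the parent's decomposition so that the new cuts are bounded by quantities already present in $\mathbf{b}_{\ref{thm_approxcore}}$. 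A generic gluing that ignores this placement would pick up an additive $\mathsf{adhesion}_{\ref{thm_GMST}}(t)$ term, which does not appear in the stated bound.
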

\begin{proof}
    Since $G$ does not contain an $H$-minor model, we may apply \zcref{thm_GMST} to find a tree-decomposition $(T^\star,\beta)$ for $G$ with $|V(T^\star)| \in \Ocal(|V(G)|)$ and of adhesion at most $\mathsf{adhesion}_{\ref{thm_GMST}}(t)$ such that for every $x \in V(T^\star)$ the torso $G_x$ of $\beta(x)$ has a strong $(\mathsf{apex}_{\ref{thm_GMST}}(t),\nicefrac{1}{2}(t-3)(t-4),\mathsf{width}_{\ref{thm_GMST}}(t))$-near embedding, with the apex set $A$, into a surface $\Sigma$ into which $H$ does not embed such that $G_x - A$ is a minor of $G$.
    Note that according to \zcref{thm_GMST} we can find $(T^\star,\beta)$ in $2^{\poly(t)}n^3m \log n$-time, which incidentally completely dominates the runtime of all other operations we will perform, other than an additional factor of $k$ we incur later on.
    We root this tree-decomposition at some arbitrary vertex $r \in V(T^\star)$.

    \paragraph{Decomposing the bags of $(T,\beta)$ or finding large grids.}
    Let $t \in \beta(t)$ and let $G_t$ be the torso of $\beta(t)$, with the apex set $A_t$ and $G_t' \coloneqq G_t - A_t$. 
    Further, let $\rho$ be the $\Sigma$-rendition of $G_t'$ associated with the strong near embedding of $G_t$.
    We note here that we specifically assume that for every vortex $c \in C(\rho)$ and each set $X$ of the linear-decomposition of the vortex-society of $c$, we have constructed $G_t'$ such that all possible edges between vertices of $X$ are included in $G_t'$.
    This does not harm our later efforts, as this does not change the width of our vortices, but does help us in the construction of a branch-decomposition of $G$ later.
    
    Recall that according to the definition of strong near embedding the components of the graph $B$, defined as the result of removing $V(\sigma_{\rho}(c)) \setminus N_{\rho}(c)$ for all vortices $c \in C(\rho)$ from $G_t'$, each have a 2-cell embedding in a surface of genus at most that of $\Sigma$.
    We define $B'$ as in the proof of \zcref{thm_woodsquestion}, meaning that $B'$ is the result of adding a vertex $v_c$ and all edges between $v_c$ and the vertices in $N_\rho(c)$ for each vortex $c \in C(\rho)$.
    This means that in particular there exists a 2-cell-embedding $\phi$ of $B'$ into $\Sigma$ in which $v_c$ is the only vertex in the interior of the disk $c$ that is a vortex of $\rho$ and the vertices contained in $\bd(c)$ are exactly $N_\rho(c)$.
    Let $U$ be the collection of all vertices $v_c \in V(B')$ for which $c \in C(\rho)$ is a vortex.
    Due to the properties of our strong near embedding of $G_t$ we know that $|U| \leq \nicefrac{1}{2}(t-3)(t-4)$.

    Suppose that $\Sigma$ has positive genus.
    Should $B'$ have representativity at least $k + \nicefrac{1}{2}(t-3)(t-4)$, we use \zcref{prop_tanglebranchwidthduality,prop_highrepresentativitygivestangle} to conclude that $B'$ has branchwidth at least $k + \nicefrac{1}{2}(t-3)(t-4)$.
    Thus, due to the small size of $U$, the graphs $B$ and $G$ have branchwidth at least $k$ and we are done.
    We may therefore assume that we can instead apply \zcref{prop_findnoncontrcurve} to find a non-contractible curve $\gamma$ in $\Sigma$ that intersects $\phi$ only in vertices, with $V(\gamma)$ being these vertices, such that $|V(\gamma)| < k + \nicefrac{1}{2}(t-3)(t-4)$.

    As in the proof of \zcref{thm_woodsquestion}, we may assume that the intersection of $\gamma$ and each vortex $c \in C(\rho)$ for which $v_c \in V(\gamma)$ contains only one non-trivial curve $\gamma$ that contains $v_c$ and has two distinct endpoints $u^c_\gamma,w^c_\gamma$ in $\bd(c)$.
    Note that this is algorithmically feasible since we only need to know $V(\gamma)$ and the partition of $\bd(c)$ defined by the points $u^c_\gamma,w^c_\gamma$ for the coming arguments.
    Let $(G_c,\Omega_c)$ be the vortex society of $c$ in $\rho$.
    Using $u^c_\gamma,w^c_\gamma$ we may partition $\Omega_c$ into two segments $I^1_c,I^2_c$ and, since $\rho$ and thus $(G_c,\Omega_c)$ have depth at most $\mathsf{width}_{\ref{thm_GMST}}(t)$, this allows us to find an $I^1_c$-$I^2_c$-separator $S_c$ in $\sigma_\rho(c)$ of order at most $2\mathsf{width}_{\ref{thm_GMST}}(t)$.
    For any $c \in C(\rho)$ such that $v_c \not\in V(\gamma)$ we let $S_c \coloneqq \emptyset$ and define $S = (V(\gamma) \setminus U) \cup \bigcup_{c \in C(\rho) S_c}$.
    We conclude that $|S| \leq k + (t-3)(t-4)\mathsf{width}_{\ref{thm_GMST}}(t)$.

    Since $\gamma$ is non-contractible, $\Sigma - \gamma$ consists of two surfaces whose closures $\Sigma_1,\Sigma_2$ have combined genus less than the genus of $\Sigma$.
    For both $i \in [2]$ we add all disks $c \in C(\rho)$ to $\Sigma_i$ that are vortices and whose interior intersects $\Sigma_i$.
    We let the resulting surfaces be $\Sigma_1'$ and $\Sigma_2'$, and let $\phi_i$ be the restriction of $\phi$ to $\Sigma_i$ for both $i \in [2]$, chosen such that the vertices of $I_c^i$ are drawn by $\phi_i$ in $\Sigma_i$ for each vortex $c \in C(\rho)$ with $v_c \in U$.
    From this we derive a vortex-free rendition $\rho_i$ by enclosing each edge of the graph embedded by $\phi_i$ in a disk.
    For both $i \in [2]$, we then add to $\rho_i$ all vortices $c \in C(\rho)$ with $c \subseteq \Sigma_i$ and construct $G_i$, intended to be the graph decomposed by $\rho_i$, by taking the union of the component of $B - S$ that is embedded by $\phi_i$ and, for each vortex $c \in C(\rho_i)$, all components of $\sigma_{\rho}(c) - S_c$ that contain a vertex of $I_c^i$.
    For both $i \in [2]$, the $\Sigma_i$-rendition $\rho_i$ has breadth at most $\nicefrac{1}{2}(t-3)(t-4)$ and depth at most $\mathsf{width}_{\ref{thm_GMST}}(t)$ by construction.

    Let $g \leq \max(\eg(H)-1,0)$ be the genus of $\Sigma$.
    By iterating the process just outlined, we arrive either at a witness that $G$ has branchwidth at least $k$, or there exists a set $S' \subseteq V(G_t')$ with
    \[ |S'| \leq g(k + (t-3)(t-4)\mathsf{width}_{\ref{thm_GMST}}(t)) , \]
    such that we can decompose $G_t' - S'$ into graphs $G_1, \ldots , G_{g+1}$ each having a rendition $\rho_i'$ in the sphere with breadth at most $\nicefrac{1}{2}(t-3)(t-4)$ and depth at most $\mathsf{width}_{\ref{thm_GMST}}(t)$.
    We note that in case $\eg(H) = 0$, meaning that $H$ is planar, $S'$ is empty.
    This serves to explain why we use the summand $\max(g,0)(k + (t-3)(t-4)\mathsf{width}_{\ref{thm_GMST}}(t))$ in the definition of $\mathbf{b}_{\ref{thm_approxcore}}(t,k,g)$,
    
    By construction, for each $i \in [g+1]$, we can remove all sets $V(\sigma_{\rho_i'}(c)) \setminus N_{\rho_i'}(c)$, where $c \in C(\rho_i')$ is a vortex, to find a graph $B_i$ which has a 2-cell-embedding $\phi_i'$ in $\Sigma_i$ derived from $\rho_i'$ and such that $B_i$ is a minor of $G$.
    This allows us to apply \zcref{prop_catchrat} to either confirm that $B_i$ has branchwidth at least $k$, or we find a branch-decomposition of width at most $k$.
    As a result, we can find a branch-decomposition of $B_i$ with width at most $k + \mathsf{width}_{\ref{thm_GMST}}(t)(t-3)(t-4) + 3(t-3)(t-4)$ via \zcref{thm_spherewithoutvorticeshighbw}.
    
    Repeating this process for all $i \in [g+1]$ allows us to either confirm that $G$ has branchwidth at least $k$, or instead find a branch-decomposition as described above for each $B_i$.
    Applying \zcref{lem_branchdecompositionviacutset} yields a branch-decomposition $(T_t',\tau_t')$ of the entirety of $G_t$ of width at most
    \begin{align*}
        k + (t-3)(t-4)(\mathsf{width}_{\ref{thm_GMST}}(t) + 2) + |S' \cup A_t| =~& k + (t-3)(t-4)\mathsf{width}_{\ref{thm_GMST}}(t) + 3(t-3)(t-4) \ + \\
      & g(k + (t-3)(t-4)\mathsf{width}_{\ref{thm_GMST}}(t)) + \mathsf{apex}_{\ref{thm_GMST}}(t) .
    \end{align*}

    \paragraph{Piecing together a branch-decomposition for $G$.}
    We now combine the branch-decompositions of the bags of $(T^\star,\beta)$ into a branch-decomposition of $G$ with width at most $\mathbf{b}_{\ref{thm_approxcore}}(t,k,\eg(H)-1)$.
    For this purpose, let $t \in V(T^\star)$ now be arbitrary and let $t_1, \ldots , t_\ell$ be its children in $T^\star$, with $(T_t',\tau_t')$ and $(T_{t_1}',\tau_{t_1}'), \ldots, (T_{t_\ell}',\tau_{t_\ell}')$ being the associated branch-decompositions for $G_t$ and $G_{t_1}, \ldots , G_{t_\ell}$.
    We allow for $\ell = 0$ to be the case, if $t$ is a leaf of $T^\star$.
    Furthermore, we will assume that all edges represented in $(T_t',\tau_t')$ are removed from the branch-decompositions of its children.
    This clearly does not increase the width of any of these decompositions.
    
    To show that we can reconcile these decompositions, we need to take note of the particulars of a few of our proofs.
    First, note that by using the explicit constructions from the proofs of \zcref{lem_branchdecompositionviacutset} and \zcref{thm_spherewithoutvorticeshighbw} to build $(T_t',\tau_t')$, we can guarantee the following properties.
    Let $A$ be the apex set associated with $G_t$ and let $A'$ be the set we constructed above with $A \subseteq A'$ and $|A'| = \max(\eg(H) - 1,0)(k + (t-3)(t-4)\mathsf{width}_{\ref{thm_GMST}}(t))$ such that $G_t - A'$ has components $B_1^t, \ldots , B_{g'}^t$, with $g' \leq \max(\eg(H) - 1,0)$, that each have a rendition in the sphere with at most $\nicefrac{1}{2}(t-3)(t-4)$ vortices each of width at most $\mathsf{width}_{\ref{thm_GMST}}(t)$.
    For all $i \in [g']$ and for each vortex $c$ in the rendition of $B_i^t$ in the sphere, there exists a linear decomposition $(X_1^c, \ldots , X_{\ell_c}^c)$ of width at most $\mathsf{width}_{\ref{thm_GMST}}(t)$, associated with the linearisation of the vertices $v_1^c, \ldots , v_{\ell_c}^c$ on the boundary of $c$ and a partitioning $E_1^c, \ldots , E_{\ell_c}^c$ of the edges associated with $c$.
    Each vertex $v_i^c$, with $i \in [\ell_c]$ is associated with a subtree $T^c_i \subseteq T_t'$ rooted in a vertex $w_i^c$ that has two children, one being associated via $\tau_t'$ with the edge $e_i^c = v_i^cv_{i+1}^c$, or $v_i^cv_1^c$, if $i = \ell_c$, and the other being associated with a subtree of $T_t'$ whose leaves comprise those corresponding to $E_i^c$ according to $\tau_t'$ and possibly some edges associated with vertices in $A$, depending on where \zcref{lem_branchdecompositionviacutset} placed them.
    In particular, the edge incident to $w_i^c$ that lies outside of $T^c_i$ can be observed to have width at most $\mathsf{width}_{\ref{thm_GMST}}(t) + 1$, using the arguments from the proof of \zcref{thm_spherewithoutvorticeshighbw}, since the cut at worst contains all of $X_i^c$ and the one vertex in $e_i^c$ that potentially lies outside of $X_i^c$.

    Next, we make some observations on the tree-decomposition $(T^\star,\beta)$ and in particular, where we can expect $S_i \coloneqq \beta(t) \cap \beta(t_i)$ to lie with respect to $A$ and the rendition $\rho$ of $G_t' = G_t - A$, for $i \in [\ell]$.
    There are three options.
    First, we note that it is possible that $S_i \subseteq A$.
    Next, most of the proof of \zcref{thm_GMST_induction} is devoted to the case in which there exists some vortex $c \in C(\rho)$ with a linear decomposition $({X_1^c}', \ldots , {X_{\ell_c'}^c}')$ (of width at most $\mathsf{width}_{\ref{thm_GMST}}(t)$) and there exists some $j \in [\ell_c']$ such that $S_i \subseteq A \cup {X_j^c}'$.
    Recall that we have assumed above that for every bag of a linear decomposition of a vortex $c \in C(\rho)$ we have included all possible edges in $G_t'$.
    Note further that $A \subseteq A'$.
    Thus another possibility for the location of $S_i$ is that there exists a $B_j^t$, with $j \in [g']$, which has a rendition with a vortex $c$, such that there exists an $h \in [\ell_c]$ with $S_i \subseteq A' \cup X_h^c$.
    Finally, the third and last option is that there exists some cell $c \in C(\rho)$ that is not a vortex, and thus we have $|N_\rho(c)| \leq 3$, such that we have $S_i \subseteq A \cup N_\rho(c)$.
    We let $t_1, \ldots, t_\ell$ be sorted such that there exists $k,p \in [0,\ell]$ with $k < p$ and $t_1, \ldots , t_k$ fall into the first option, $t_{k+1}, \ldots , t_p$ fall into the second option, and $t_{p+1}, \ldots , t_\ell$ are associated with some non-vortex cell $c$ of $\rho$.

    We now construct our new branch-decomposition as follows.
    First, for each $i \in [\ell]$, we subdivide an arbitrary edge of $T_{t_i}'$ and let $u^s_i$ be the subdivision vertex.
    Next, we let $T^1$ be a ternary tree with $k$ leaves and the root $r^1$, in which we subdivide an arbitrary edge, with $u^1$ being the subdivision vertex, and then identify $u^s_1, \ldots , u^s_k$ via an arbitrary bijection with the leaves of $L(T)$.
    Finally, we subdivide an arbitrary edge of $T_t'$ and make the subdivision vertex adjacent to $u^1$ to form $T^1_t$.
    The bijections $\tau_t',\tau_{t_1}', \ldots , \tau_{t_k}'$ are then expanded in a straightforward fashion to $T^1_t$ to form the new branch-decomposition $(T^1_t, \tau_t^1)$.
    We observe that, since $S_i \subseteq A$, the width of this new decomposition remains firmly below our bound of $\mathbf{b}_{\ref{thm_approxcore}}(t,k,\eg(H)-1)$, as all new cuts that are introduced have width at most $|A| = \mathsf{apex}_{\ref{thm_GMST}}(t)$.

    Next, for each $i \in [k+1,p]$, let $j_i \in [g']$ be the unique index such that there exists a vortex $c_i$ in the rendition of $B^t_{j_i}$ in the sphere and there exists a set $Y^{c_i}_i$ in the linear decomposition of the vortex-society of $c_i$ with $S_i \subseteq A' \cup Y^{c_i}_i$.
    We subdivide the unique edge of $T^{c_i}_{j_i}$ incident to the leaf $l^{c_i}_{j_i}$ that is not associated with an edge contained in the vortex-society of $c_i$ and let $u^2_i$ be the subdivision vertex.
    Next, subdivide an arbitrary edge of $T_{t_i}'$ and make the subdivision vertex adjacent to $u^2_i$.
    Repeating this operation for all $i \in [k+1,p]$ yields $T^2_t$, where, if we have $Y^{c_i}_i = Y^{c_i}_{i'}$ for two distinct $i, i' \in [k+1,p]$, we continue to subdivide the edge incident to $l^{c_i}_{j_i}$.
    We again extend $\tau^1_t,\tau_{t_{k+1}}', \ldots , \tau_{t_p}'$ in a straightforward fashion to $\tau^2_t$ to get the new branch-decomposition $(T^2_t,\tau^2_t)$.
    To see that the width of $(T^2_t,\tau^2_t)$ did not increase past $\mathbf{b}_{\ref{thm_approxcore}}(t,k,\eg(H)-1)$, we first observe that the unique edge incident to $w^{c_i}_{j_i}$ that lies outside of $T^{c_i}_{j_i}$ now has width at most
    \[ \mathsf{apex}_{\ref{thm_GMST}}(t) + \max(\eg(H) - 1,0)(k + (t-3)(t-4)\mathsf{width}_{\ref{thm_GMST}}(t)) + \mathsf{width}_{\ref{thm_GMST}}(t) + 1 , \]
    as $S_i \subseteq Y^{c_i}_i \cup A'$ represents the only interaction between $G_{t_i}$ and $G_t$.
    Note in particular that this bound was already the appropriate bound for $(T_t',\tau_t')$ according to our proof of \zcref{lem_branchdecompositionviacutset}.
    Within the subtree of $T^2_t$ corresponding to $T^{c_i}_{j_i}$, the same bound holds via the same observation.
    In particular, via analogous arguments to those presented in the proof of \zcref{thm_spherewithoutvorticeshighbw}, our construction ensures that $\mathbf{b}_{\ref{thm_approxcore}}(t,k,\eg(H)-1)$ is an upper bound on the width of all cuts induced by any edge in $T^2_t$ outside of these special subtrees.

    Finally, concerning the third option, we first let $c_i$ be the non-vortex cell in $\rho$ with $S_i \subseteq A' \cup N_\rho(c_i)$ for each $i \in [p+1,\ell]$.
    We once more subdivide an arbitrary edge in $T_{t_i}'$ and let $u_i^3$ be the subdivision vertex.
    Next, let $f_i$ be an edge in $G_t'$ incident to a vertex in $N_\rho(c_i)$, preferably with both endpoints in $N_\rho(c_i)$, which is only an issue if $|N_\rho(c_i)| = 1$.
    (If no such vertex exists, we may treat $(T_{t_i}',\tau_{t_i}')$ like the first case and note that the resulting cuts have width at most $\mathsf{apex}_{\ref{thm_GMST}}(t) + \max(\eg(H) - 1,0)(k + (t-3)(t-4)\mathsf{width}_{\ref{thm_GMST}}(t))$.)
    We subdivide the unique edge incident to the leaf of $T_t^2$ that $\tau_t^2$ associates with $f_i$ and make the resulting subdivision vertex adjacent to $u_i^3$.
    Repeating this for all $i \in [p+1,\ell]$ yields our final tree $T^3_t$ and the associated bijection $\tau_t^3$ can again be derived in a straightforward fashion from $\tau^2_t,\tau_{t_{p+1}}', \ldots , \tau_{t_\ell}'$.
    In line with our previous arguments, we note that due to $|N_\rho(c_i)| \leq 3$, the most significant increase to our bounds is found in the edge incident to the former parent of the leaf of $T_t^2$ that $\tau_t^2$ associates with $f_i$.
    This edge may now have width at worst
    \[ \mathsf{apex}_{\ref{thm_GMST}}(t) + \max(\eg(H) - 1,0)(k + (t-3)(t-4)\mathsf{width}_{\ref{thm_GMST}}(t)) + 3 . \]
    As in the previous cases, the remaining edges also still have their width below our bound of $\mathbf{b}_{\ref{thm_approxcore}}(t,k,\eg(H)-1)$.
    Thus we have successfully united the branch-decompositions of $G_t$ with those of its children $G_{t_1}, \ldots , G_{t_\ell}$.
    Iterating this process bottom-up in the tree-decomposition $(T^\star,\beta)$, starting from the leaves and working its way towards the root $r$ of $T^\star$, ultimately yields a branch-decomposition of $G$ that has width at most $\mathbf{b}_{\ref{thm_approxcore}}(t,k,\eg(H)-1)$.
    This completes our proof.
\end{proof}

We note that the formulation of our statement lets us simplify our estimate on $\mathbf{b}_{\ref{thm_approxcore}}(t,k,g)$ by using \zcref{prop_ktgenus} and noting that, if $t \coloneqq |V(H)|$, we get $\mathbf{b}_{\ref{thm_approxcore}}(t,k,\eg(H)-1) \in \Ocal( \eg(H)k + t^{2304} )$.

There are a host of consequences from this theorem.
We start by noting that \zcref{thm_GMST_randomised} tells us that we can in fact perform the above computations in polynomial time via a randomised algorithm, if we accept that the answer is only returned to us with high probability and with worse bounds on the part of $\mathbf{b}_{\ref{thm_approxcore}}$ that only depends on $t$.

\begin{corollary}\label{cor_approxcore}
    Let $k$ be an integer, let $H$ be a graph with $t \coloneqq |V(H)|$, and let $G$ be a graph that does not contain $H$ as a minor.
    There exists a function $\mathbf{b}_{\ref{cor_approxcore}} \colon \mathbb{N}^3 \to \mathbb{N}$ with $\mathbf{b}_{\ref{cor_approxcore}}(t,k,g) \in \Ocal( gk + gt^{11502} )$ and an algorithm that determines whether $G$ has branchwidth at least $k$ or branchwidth at most $\mathbf{b}_{\ref{cor_approxcore}}(t,k,\eg(H)-1)$ with high probability in time $(t+n)^{\Ocal(1)}$.
\end{corollary}

Both version of this theorem of course allow us to approximate the branchwidth of any given graph $G$ that excludes a given other graph $H$ as a minor with relative ease by simply testing all $n$ possible values for it.

\begin{corollary}\label{cor_bwapprox}
    Let $H$ be a graph with $t \coloneqq |V(H)|$ and let $G$ be a graph that does not contain $H$ as a minor.
    Then
    \begin{itemize}
        \item there exists an algorithm that outputs a value $b$ with $b \leq \bw(G) \leq \mathbf{b}_{\ref{thm_approxcore}}(t,b,\eg(H)-1)$ in time $2^{\poly(t)}n^3m \log^2 n$, and

        \item there exists an algorithm that outputs a value $b$ with $b \leq \bw(G) \leq \mathbf{b}_{\ref{cor_approxcore}}(t,b,\eg(H)-1)$ in time $(t+n)^{\Ocal(1)}$ with high probability.
    \end{itemize}
\end{corollary}

Furthermore, note that we only find the $k$-grid in one specific situation within the proof of \zcref{thm_approxcore}, namely when we end up with an embedded graph on a surface with positive genus and high representativity, or with an embedded graph of high branchwidth on the sphere.
Thus, since any $K_{k^2}$ also contains a $k$-grid, instead of approximating the branchwidth of $G$, we can also approximate the size of a largest $k$-grid in $G$ using the same methods.

\begin{corollary}
    Let $H$ be a graph with $t \coloneqq |V(H)|$, let $G$ be a graph that does not contain $H$ as a minor, and let $\mathsf{k}$ be the maximum integer such that $G$ contains a $\mathsf{k}$-grid minor.
    Then
    \begin{itemize}
        \item there exists an algorithm that outputs a value $k$ with $k \leq \mathsf{k} \leq \mathbf{b}_{\ref{thm_approxcore}}(t,k,\eg(H)-1)$ in time $2^{\poly(t)}n^3m \log^2 n$, and

        \item there exists an algorithm that outputs a value $k$ with $k \leq \mathsf{k} \leq \mathbf{b}_{\ref{cor_approxcore}}(t,k,\eg(H)-1)$ in time $(t+n)^{\Ocal(1)}$ with high probability.
    \end{itemize}
\end{corollary}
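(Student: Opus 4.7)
The plan is to run the algorithm of \Cref{thm_approxcore} (resp. \Cref{cor:approxcore}) essentially unchanged, but upgrade every ``branchwidth at least $k$'' verdict it produces into an explicit $k$-grid minor model of $G$. As noted in the paragraph preceding the statement, the proof of \Cref{thm_approxcore} only concludes $\bw(G) \geq k$ in two concrete situations: (i) when a subgraph $B'$ has a 2-cell embedding in a positive-genus surface with representativity at least $k + \tfrac{1}{2}(t-3)(t-4)$, and (ii) when a planar subgraph $B_i$ turns out to have branchwidth at least $k$ according to \Cref{prop_catchrat}. In case (i), \Cref{prop_gimmedagridfacewidth} (or, for a more favourable running time, \Cref{lem_make_con_genus}) yields an explicit $k$-grid minor of $B'$ with its image contained in the relevant embedded region; in case (ii), \Cref{prop_aprox_alg} applied to $B_i$ produces an explicit $k$-grid minor of $B_i$. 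In both cases $B'$, $B_i$, and the intermediate graphs from which they are constructed are minors of $G$ thanks to the strong near-embedding guarantee of \Cref{thm_GMST}, so these models lift to $k$-grid minors of $G$.

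To obtain the claimed approximation of $\mathsf{k}$, I would wrap this grid-producing variant in a binary search over $k \in \{1, \ldots, n\}$, incurring at most $\Ocal(\log n)$ calls. Let $k^\star$ be the largest value for which the variant returns a $k^\star$-grid minor; then $k^\star \leq \mathsf{k}$ is certified. For the input $k^\star + 1$ the variant must instead return a branch-decomposition of $G$ of width at most $\mathbf{b}_{\ref{thm_approxcore}}(t, k^\star+1, \eg(H)-1)$, and since the $\mathsf{k}$-grid has branchwidth $\mathsf{k}$, we obtain $\mathsf{k} \leq \bw(G) \leq \mathbf{b}_{\ref{thm_approxcore}}(t, k^\star+1, \eg(H)-1)$. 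Because $\mathbf{b}_{\ref{thm_approxcore}}$ is linear in its second argument, the ``$+1$'' may be absorbed into the hidden constants of the asymptotic bound, so returning $k \coloneqq k^\star$ achieves $k \leq \mathsf{k} \leq \mathbf{b}_{\ref{thm_approxcore}}(t, k, \eg(H)-1)$. The total running time is $\Ocal(\log n)$ times that of a single invocation of the grid-producing variant of \Cref{thm_approxcore}, comfortably fitting inside $2^{\poly(t)} n^4 m \log n$. The randomised bullet follows identically by substituting \Cref{cor:approxcore} in place of \Cref{thm_approxcore} and invoking \Cref{thm_GMST_randomised}.

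The main (and essentially only) obstacle is a bookkeeping one: each $k$-grid minor produced in a locally embedded subgraph must lift coherently to a $k$-grid minor of $G$. This is automatic from the structural guarantees already used in \Cref{thm_approxcore} --- \Cref{thm_GMST} ensures that $G_x - A_x$ is a minor of $G$ for every bag $x$ of the tree-decomposition, while the subsequent surgeries (cutting along short non-contractible curves and along vortex separators of order $2\mathsf{width}_{\ref{thm_GMST}}(t)$) only delete vertices, and the planar reduction step at most additionally contracts edges. Both operations preserve the minor relation, so composing the resulting chain of minor embeddings transitively delivers the required $k$-grid minor of $G$, completing the proof.
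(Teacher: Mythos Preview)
Your approach is correct and matches the paper's, which justifies the corollary only via the brief paragraph preceding it (noting that the two ``$\bw\geq k$'' verdicts in the proof of \Cref{thm_approxcore} both arise from embedded minors of $G$ in which a large grid can be extracted). Two small remarks. First, neither \Cref{prop_gimmedagridfacewidth} nor \Cref{prop_aprox_alg} yields an \emph{exact} $k$-grid from the stated hypotheses: representativity $\geq k+\tfrac12(t-3)(t-4)$ gives only a $\Theta(k)$-grid, and for a planar $B_i$ with $\bw(B_i)\geq k$ one first needs $\bg(B_i)\geq\Theta(k)$ before \Cref{prop_aprox_alg} applies, losing another constant. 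So to genuinely certify $\mathsf{k}\geq k$ you must inflate the thresholds by these constants, which multiplies the $k$-terms inside $\mathbf{b}_{\ref{thm_approxcore}}$ by a fixed factor---harmless for the asymptotic $\Ocal(gk+\poly(t))$ shape but not literally matching the exact function in the statement (a looseness the paper itself shares). Second, your binary search is in fact sound: since \Cref{prop_findnoncontrcurve} returns a \emph{shortest} non-contractible curve, the sequence of cuts (and vortex separators $S_c$) is identical for all $k$, so the variant's behaviour is monotone in $k$; alternatively, linear search over $k\in[n]$ as in \Cref{cor:bwapprox} avoids the issue entirely and still fits the stated runtime.
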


In case we explicitly want to find a grid in our algorithm, we can again employ the tricks presented in \zcref{subsec:findgridbdgenus} to make our results spit out either a large grid-minor model or a branch-decomposition of small width at the cost of slightly worse functions.
We again state a pair of results, corresponding to whether we use \zcref{thm_GMST} or \zcref{thm_GMST_randomised}

\begin{theorem}\label{thm_approxcore_grid}
    There exists a function $\mathbf{b}_{\ref{thm_approxcore_grid}} \colon \mathbb{N}^3 \to \mathbb{N}$ with $\mathbf{b}_{\ref{thm_approxcore_grid}}(t,k,g) \in \mathcal{O}( kg^{\nicefrac{7}{2}} + gt^{2302})$ and an algorithm that, for any positive integer $k$, any graph $H$ with $t \coloneqq |V(H)|$, and any graph $G$ not containing $H$ as a minor, returns either a branch-decomposition of width at most $\mathbf{b}_{\ref{thm_approxcore_grid}}(t,k,\mathsf{eg}(H)-1)$ or a $k$-grid-minor model in $G$ in time $2^{\poly(t)}kn^3m \log^2 n$.
\end{theorem}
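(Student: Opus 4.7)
The plan is to mimic the proof of \Cref{thm_approxcore} verbatim, but replace every step that concludes ``the branchwidth is at least $k$'' by an explicit extraction of a $k$-grid minor model. First I would invoke \Cref{thm_GMST}; since $G$ is $H$-minor-free, this yields the tree-decomposition $(T^\star,\beta)$ whose torsos admit strong near-embeddings in surfaces of genus at most $g \coloneqq \eg(H)-1$. By \Cref{prop_findhighbwbag} I can restrict attention to one bag torso $G_t$ whenever the current branchwidth estimate is exceeded, delete the apex set $A_t$, patch each vortex $c$ by a new vertex $v_c$ adjacent to $N_\rho(c)$, and obtain a 2-cell embedding $\phi$ of the resulting graph $B'$ into $\Sigma$.

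The key change occurs in the representativity test applied to $B'$. Instead of asking for representativity at least $k + \tfrac{1}{2}(t-3)(t-4)$, I raise the threshold to $c_{\ref{lem_make_con_genus}}\cdot g^{\nicefrac{5}{2}}\cdot k + \tfrac{1}{2}(t-3)(t-4)$. If this is met, then the underlying embedded graph $B$ (obtained by deleting vortex interiors) has representativity at least $c_{\ref{lem_make_con_genus}}\cdot g^{\nicefrac{5}{2}}\cdot k$ in a surface of positive genus; by \Cref{prop_highrepresentativitygivestangle}, \Cref{prop_tanglebranchwidthduality} and \Cref{prop:twbwequiv}, $B$ has treewidth at least $c_{\ref{lem_make_con_genus}}\cdot g^{\nicefrac{5}{2}}\cdot k$, so \Cref{lem_make_con_genus} extracts a $k$-grid minor model inside a disk, which is thus a minor of $G$ (using that $G_t-A_t$ is a minor of $G$ by \Cref{thm_GMST}). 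Otherwise I apply \Cref{prop_findnoncontrcurve} to find a short non-contractible curve, cut along it exactly as in \Cref{thm_approxcore}, and recurse on the smaller-genus pieces. Each cut removes at most $c_{\ref{lem_make_con_genus}}\cdot g^{\nicefrac{5}{2}}\cdot k + (t-3)(t-4)\mathsf{width}_{\ref{thm_GMST}}(t)$ vertices, and at most $g$ cuts are needed to reduce every component to a sphere, contributing a total deletion set of size $\Ocal(g^{\nicefrac{7}{2}}k + g\cdot t^{2302})$, which is absorbed by \Cref{lem_branchdecompositionviacutset}.

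When we reach a spherical piece $B_i$ together with its associated vortices, I invoke \Cref{prop_catchratbranchdecomp} to either obtain a branch-decomposition of $B_i$ of width at most $c_{\ref{lem_make_con_genus}}\cdot g^{\nicefrac{5}{2}}\cdot k$ or to certify that its branchwidth is higher. In the latter case, since $B_i$ is planar, \Cref{prop_aprox_alg} directly outputs a $k$-grid minor model in $B_i$ in polynomial time, which is again a minor of $G$. In the former case, I promote each per-component planar branch-decomposition to one for $B_i$ together with its vortices using \Cref{thm_spherewithoutvorticeshighbw}, and then paste these together with the apex vertices, the non-contractible-curve separators, and the adhesion sets of $(T^\star,\beta)$ using precisely the bag-by-bag gluing procedure developed in the second half of the proof of \Cref{thm_approxcore}.

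The main obstacle is the final reassembly: one must track how the adhesion sets of $(T^\star,\beta)$, the linear decompositions of the vortices, and the extra apex vertices introduced by each non-contractible cut interact, so that the combined branch-decomposition of $G$ has width $\Ocal(g^{\nicefrac{7}{2}}k + g\cdot t^{2302}) = \mathbf{b}_{\ref{thm_approxcore_grid}}(t,k,g)$. Fortunately this is exactly the bookkeeping already carried out in \Cref{thm_approxcore}: only the per-cut apex term changes from $k$ to $c_{\ref{lem_make_con_genus}}\cdot g^{\nicefrac{5}{2}}\cdot k$, propagating through the $g$ iterations to give the extra $g^{\nicefrac{5}{2}}$ factor. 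The runtime analysis is identical to that of \Cref{thm_approxcore}, inheriting the $2^{\poly(t)}n^3m\log n$ cost of \Cref{thm_GMST} and an extra linear factor in $k$ from the grid extractions via \Cref{lem_make_con_genus} and \Cref{prop_aprox_alg}.
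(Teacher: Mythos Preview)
Your proposal is correct and is precisely the approach the paper intends: the paper does not spell out a separate proof of \Cref{thm_approxcore_grid} but states that it follows by ``employ[ing] the tricks presented in \Cref{subsec:findgridbdgenus}'' inside the proof of \Cref{thm_approxcore}, i.e.\ by raising the representativity threshold so that \Cref{lem_make_con_genus} can be invoked to extract a grid model, and using \Cref{prop_aprox_alg} in the planar base case. One small inaccuracy: you invoke \Cref{prop_findhighbwbag} to focus on a single bag, but the algorithm of \Cref{thm_approxcore} processes \emph{all} bags (you need a branch-decomposition of every torso to assemble one for $G$); the grid extraction simply happens in whichever bag first fails the decomposition step, so \Cref{prop_findhighbwbag} plays no role here.
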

\begin{theorem}\label{thm_approxcore_grid_rando}
    There exists a function $\mathbf{b}_{\ref{thm_approxcore_grid_rando}} \colon \mathbb{N}^3 \to \mathbb{N}$ with $\mathbf{b}_{\ref{thm_approxcore_grid_rando}}(t,k,g) \in \mathcal{O}( kg^{\nicefrac{7}{2}} + gt^{11502})$ and an algorithm that, for any positive integer $k$, any graph $H$ with $t \coloneqq |V(H)|$, and any graph $G$ not containing $H$ as a minor, returns either a branch-decomposition of width at most $\mathbf{b}_{\ref{thm_approxcore_grid}}(t,k,\mathsf{eg}(H)-1)$ or a $k$-grid-minor model in $G$ with high probability in time $(t+n)^{\Ocal(1)}$.
\end{theorem}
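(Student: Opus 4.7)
}
The plan is to mirror verbatim the argument used to establish \cref{thm_approxcore_grid}, simply replacing the deterministic variant of the GMST (\cref{thm_GMST}) by its randomized counterpart (\cref{thm_GMST_randomised}), and then tracking how the (larger) parameter bounds propagate. First, I would invoke \cref{thm_GMST_randomised} on $(G,H)$ to obtain, with high probability in time $(t+n)^{\Ocal(1)}$, either an $H$-minor model (in which case we stop, as the hypothesis $H \not\preceq G$ forbids this outcome) or a tree-decomposition $(T^\star,\beta)$ of adhesion $\Ocal(t^{11500})$ whose bags have strong $(\Ocal(t^{11500}),\tfrac{1}{2}(t-3)(t-4),\Ocal(t^{11500}))$-near embeddings into a surface $\Sigma$ of genus at most $\eg(H)-1$ into which $H$ does not embed, and such that removing the vortex interiors from each torso yields a minor of $G$.

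Next, bag by bag, I would run exactly the curve-cutting procedure developed in the proof of \cref{thm_approxcore}. For each torso $G_t$, let $\rho$ be its associated $\Sigma$-rendition and let $B'$ be the graph obtained by adding one representative vertex per vortex as in that proof. I would iteratively search for short non-contractible curves in the embedding of $B'$ using \cref{prop_findnoncontrcurve}; each such curve carries a vertex set of controlled size, and routing through each vortex costs at most $2\,\mathsf{width}_{\ref{thm_GMST_randomised}}(t)$ extra apex vertices via the depth bound. After at most $g \coloneqq \eg(H)-1$ such cuts, the residual graph is rendered on the sphere (with holes), with vortex depth still at most $\mathsf{width}_{\ref{thm_GMST_randomised}}(t)$ and breadth at most $\tfrac{1}{2}(t-3)(t-4)$.

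On each resulting spherical piece, rather than running the planar \emph{Ratcatcher} to decide branchwidth, I would instead follow the strategy of \cref{thm_approxcore_grid}: invoke \cref{lem_make_con_genus} to extract a $k$-grid minor from any sub-piece whose treewidth (and hence branchwidth) is sufficiently large compared to $c_{\ref{lem_make_con_genus}}\cdot g^{5/2}\cdot k$. This is the point where the grid witness is produced, and because \cref{lem_make_con_genus} delivers the grid inside a disk of $\Sigma$, the grid-model lifts through the curve-cutting reductions and through the GMST tree-decomposition back to $G$ itself (using that, up to apices, each bag in the decomposition of \cref{thm_GMST_randomised} is a minor of $G$). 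Whenever \cref{lem_make_con_genus} does not trigger, I would apply \cref{thm_spherewithoutvorticeshighbw} in place of \cref{prop_catchratbranchdecomp} to constructively obtain a branch-decomposition of the piece, and then reassemble the pieces into a branch-decomposition of $G_t$ via \cref{lem_branchdecompositionviacutset}. Finally, as in the closing part of the proof of \cref{thm_approxcore}, I would stitch together the per-bag branch-decompositions along the adhesion sets of $(T^\star,\beta)$ using the same three-case analysis (apex-only, vortex-supported, non-vortex-cell) to guarantee the global width bound.

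Tallying contributions gives width at most $\Ocal\big(k\cdot g^{7/2} + g\cdot t^{11502}\big)$: the $g^{7/2}k$ term arises because each of the $g$ curve-cuts loses additive $\Ocal(g^{5/2}k)$ from the \cref{lem_make_con_genus} threshold (with the linear equivalence of $\bw$ and $\tw$ via \cref{prop:twbwequiv}), while the $g\cdot t^{11502}$ term absorbs the apex, vortex-width and breadth contributions at the $\Ocal(t^{11500})$ scale dictated by \cref{thm_GMST_randomised}. The runtime is $(t+n)^{\Ocal(1)}$ with high probability, dominated by \cref{thm_GMST_randomised}; every remaining subroutine (\cref{prop_findnoncontrcurve}, \cref{lem_make_con_genus}, \cref{thm_spherewithoutvorticeshighbw}) is polynomial. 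I do not anticipate a genuine conceptual obstacle: the entire argument is a faithful rerun of the proof of \cref{thm_approxcore_grid}, and the only care needed is bookkeeping the deterioration of constants from $\Ocal(t^{2300})$ to $\Ocal(t^{11500})$ and verifying that the grid extracted by \cref{lem_make_con_genus} indeed lifts through the GMST decomposition since the near-embedded parts are minors of $G$.
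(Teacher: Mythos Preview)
Your proposal is correct and follows exactly the approach the paper indicates: rerun the proof of \cref{thm_approxcore} with \cref{thm_GMST_randomised} in place of \cref{thm_GMST}, and substitute the grid-finding machinery of \cref{subsec:findgridbdgenus} (via \cref{lem_make_con_genus}) for the mere branchwidth certificate. One small point of precision: \cref{lem_make_con_genus} should be invoked already during the curve-cutting phase, at the moment a non-contractible curve of length at least roughly $c_{\ref{lem_make_con_genus}}\cdot g^{5/2}\cdot k$ is detected (so that high representativity yields high treewidth via \cref{prop_highrepresentativitygivestangle}, \cref{prop_tanglebranchwidthduality}, and \cref{prop:twbwequiv}), rather than only on the final spherical pieces where the genus parameter would be zero; for the terminal planar pieces one uses \cref{prop_aprox_alg} directly. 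This is purely bookkeeping and does not affect the validity of your argument or the claimed bound.
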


Finally, in case we want to simply the function $\mathbf{b}_{\ref{thm_approxcore}}(t,k,g)$ and get a neater approximation at the cost of a somewhat worse runtime, we can adapt a technique presented in \cite{ThilikosW2025Approximating} to find an EPTAS of the following form.

\begin{theorem}\label{thm_eptas}
    There exists a function $f_{\ref{thm_eptas}} \colon \mathbb{N}^2 \to \mathbb{N}$ and an algorithm that, given a graph $H$ with $t \coloneqq |V(H)|$, a value $\varepsilon > 0$, and an $H$-minor-free graph $G$ as input, outputs a value $b$ where $b \leq \bw(G) \leq (\eg(H) + \varepsilon)b$ and runs in time $\Ocal(2^{\poly(t)}n^4m \log n + f_{\ref{thm_eptas}}(\varepsilon,t)n)$, where $f_{\ref{thm_eptas}}(\varepsilon,t) = 2^{(t^{2304} \cdot \nicefrac{1}{\varepsilon})}$.
\end{theorem}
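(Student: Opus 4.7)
The plan is to combine the multiplicative-constant approximation of \Cref{cor:bwapprox} with the exact FPT algorithm of \Cref{prop_fptbw} through the standard multiplicative-vs-additive dichotomy. First, I would run the deterministic approximation of \Cref{cor:bwapprox} to obtain, in time $\Ocal(2^{\poly(t)}n^4m\log n)$, a value $b_0$ such that $b_0 \leq \bw(G) \leq \mathbf{b}_{\ref{thm_approxcore}}(t,b_0,\eg(H)-1)$. Reading off the explicit definition of $\mathbf{b}_{\ref{thm_approxcore}}$ and substituting $g = \eg(H)-1$ (assuming $\eg(H) \geq 1$; the planar case is handled separately), the coefficient of $b_0$ collapses to exactly $\eg(H)$, yielding
\begin{equation*}
b_0 \leq \bw(G) \leq \eg(H)\cdot b_0 + C(t), \qquad C(t) \in \Ocal(t^{2304}).
\end{equation*}

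Next, I would set the threshold $\tau \coloneqq C(t)/\varepsilon$ and split into two cases. If $b_0 \geq \tau$, then $C(t) \leq \varepsilon b_0$, so $\bw(G) \leq (\eg(H)+\varepsilon)b_0$, and outputting $b \coloneqq b_0$ satisfies the required bound. If instead $b_0 < \tau$, the upper estimate on $\bw(G)$ becomes $\bw(G) \leq \eg(H)\tau + C(t) \in \Ocal(t^{2304}/\varepsilon)$, i.e.\ $\bw(G)$ is already bounded by a function of $t$ and $\varepsilon$ alone. In this regime I would invoke \Cref{prop_fptbw} to compute $\bw(G)$ \emph{exactly} in time $2^{\Ocal(\bw(G)^3)}\cdot n$, which fits inside a bound of the form $f_{\ref{thm_eptas}}(\varepsilon,t)\cdot n$ for a suitable function $f_{\ref{thm_eptas}}(\varepsilon,t) = 2^{\poly(t)/\poly(\varepsilon)}$; outputting $b \coloneqq \bw(G)$ gives approximation factor $1 \leq \eg(H)+\varepsilon$.

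The two main delicate points are (i) confirming from the closed-form definition of $\mathbf{b}_{\ref{thm_approxcore}}$ that the multiplicative factor in front of $b_0$ is \emph{precisely} $\eg(H)$ and not some larger constant multiple thereof, since otherwise the final approximation factor would be of the form $c\cdot \eg(H) + \varepsilon$ rather than $\eg(H)+\varepsilon$; and (ii) matching the claimed bound $f_{\ref{thm_eptas}}(\varepsilon,t) = 2^{(t^{2304}\cdot 1/\varepsilon)}$, which requires combining the bound $\bw(G) \in \Ocal(t^{2304}/\varepsilon)$ from Case B with the exponential-in-branchwidth exact algorithm and absorbing polynomial factors in $t$ and $1/\varepsilon$ into the exponent. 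Point (i) is the conceptually crucial observation and follows directly from inspecting the summands of $\mathbf{b}_{\ref{thm_approxcore}}$; point (ii) is routine bookkeeping. The overall runtime is then the sum of the two phases: $\Ocal(2^{\poly(t)}n^4m\log n)$ from the approximation step (executed always) and $\Ocal(f_{\ref{thm_eptas}}(\varepsilon,t)\cdot n)$ from the exact step (executed only in the small-branchwidth regime), as claimed.
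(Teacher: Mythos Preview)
Your proposal is correct and follows essentially the same approach as the paper: run the approximation of \Cref{cor:bwapprox}, extract the bound $\bw(G)\leq \eg(H)\cdot b_0 + C(t)$ by reading off the definition of $\mathbf{b}_{\ref{thm_approxcore}}$, and then split on whether $C(t)\leq \varepsilon b_0$ (output $b_0$) or not (bound $\bw(G)$ by a function of $t,\varepsilon$ and invoke \Cref{prop_fptbw}). Your explicit identification of point~(i)---that the leading coefficient is exactly $\eg(H)$ rather than a multiple thereof---is precisely the key observation the paper uses, and your threshold $\tau=C(t)/\varepsilon$ is just a rephrasing of the paper's condition $c/b<\varepsilon$.
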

\begin{proof}
    First, we run the first algorithm mentioned in \zcref{cor_bwapprox} and obtain a value $b$ such that $b \leq \bw(G) \leq \mathbf{b}_{\ref{thm_approxcore}}(t,b,\eg(H)-1)$.
    Notably, using \zcref{prop_ktgenus}, we can estimate $\mathbf{b}_{\ref{thm_approxcore}}$, such that
    \begin{align*}
        \mathbf{b}_{\ref{thm_approxcore}}(t,b,\eg(H))   =~& b + (t-3)^2(t-4)^2\mathsf{width}_{\ref{thm_GMST}}(t) \ + \\
    & \max(\eg(H)-1,0)(b + (t-3)(t-4)\mathsf{width}_{\ref{thm_GMST}}(t)) + \mathsf{apex}_{\ref{thm_GMST}}(t) + \mathsf{adhesion}_{\ref{thm_GMST}}(t) \\
    \leq~& \eg(H)b + 2t^4\mathsf{width}_{\ref{thm_GMST}}(t) + \mathsf{apex}_{\ref{thm_GMST}}(t) + \mathsf{adhesion}_{\ref{thm_GMST}}(t) .
    \end{align*}
    We let $c \coloneqq 2t^4\mathsf{width}_{\ref{thm_GMST}}(t) + \mathsf{apex}_{\ref{thm_GMST}}(t) + \mathsf{adhesion}_{\ref{thm_GMST}}(t)$.

    If $\nicefrac{c}{b} < \varepsilon$, then $c < \varepsilon b$ and we thus have $\bw(G) \leq \eg(H)b + c < (\eg(H) + \varepsilon)b$.
    This allows us to output $b$.

    Otherwise $\nicefrac{c}{b} \geq \varepsilon$, which means that $\nicefrac{c}{\varepsilon} \geq b$ and thus $\bw(G) \leq \eg(H)b + c \leq (\nicefrac{\eg(H)c}{\varepsilon} + c = c(\nicefrac{\eg(H)}{\varepsilon} + 1)$.
    Let $c' \coloneqq c(\nicefrac{\eg(H)}{\varepsilon} + 1)$.
    This allows us to use \zcref{prop_fptbw} in order to output $\bw(G)$ in $2^{c'}n$-time.

    In both cases, we return a value $b^\star$ such that $b^\star \leq \bw(G) \leq (\eg(H) + \varepsilon)b^\star$ and thus we can confirm that $f_{\ref{thm_eptas}}(\varepsilon,t) = 2^{c'}$ is chosen correctly.
\end{proof}

We can of course adapt this proof to get the following result on a randomised approximation algorithm that returns the desired value with high probability.

\begin{theorem}\label{thm_eptasrando}
    There exists a function $f_{\ref{thm_eptas}} \colon \mathbb{N}^2 \to \mathbb{N}$ and a randomised algorithm that, given a graph $H$ with $t \coloneqq |V(H)|$, a value $\varepsilon > 0$, and an $H$-minor-free graph $G$ as input, outputs with high probability a value $b$ where $b \leq \bw(G) \leq (\eg(H) + \varepsilon)b$ and runs in time $\Ocal(\poly(t+n) + f_{\ref{thm_eptas}}(\varepsilon,t)n)$, where $f_{\ref{thm_eptas}}(\varepsilon,t) = 2^{(t^{11504} \cdot \nicefrac{1}{\varepsilon})}$.
\end{theorem}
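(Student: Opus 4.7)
The plan is to essentially retrace the proof of \Cref{thm_eptas} but substitute the deterministic approximation with its randomized counterpart from the second item of \Cref{cor:bwapprox}, which runs in $(t+n)^{\Ocal(1)}$ time and, with high probability, outputs a value $b$ satisfying $b \leq \bw(G) \leq \mathbf{b}_{\ref{cor:approxcore}}(t,b,\eg(H)-1)$. Since $\mathbf{b}_{\ref{cor:approxcore}}(t,k,g) \in \Ocal(gk + gt^{11502})$, and $\eg(H)\in \Ocal(t^2)$ by \Cref{prop_ktgenus}, we can bound
\[
\mathbf{b}_{\ref{cor:approxcore}}(t,b,\eg(H)-1) \leq \eg(H)\cdot b + c,
\]
for some quantity $c = \Ocal(t^{11504})$ depending only on $t$. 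This step is essentially arithmetic and replaces the constant $c$ in the deterministic proof with one whose dependence on $t$ is governed by the (worse) bounds of \Cref{thm_GMST_randomised} rather than those of \Cref{thm_GMST}.

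Next I would perform the same case distinction. If $c/b < \varepsilon$, then $c < \varepsilon b$, so $\bw(G) \leq \eg(H)\cdot b + c < (\eg(H)+\varepsilon)\cdot b$, and we can safely output $b$. Otherwise $b \leq c/\varepsilon$, and therefore $\bw(G) \leq \eg(H)\cdot b + c \leq c\cdot (\eg(H)/\varepsilon + 1)$. Setting $c' \coloneqq c\cdot(\eg(H)/\varepsilon + 1) = \Ocal(t^{11504}/\varepsilon)$, we invoke the \FPT-algorithm of \Cref{prop_fptbw} to compute $\bw(G)$ exactly in time $2^{\Ocal(c'^3)}\cdot n$, and output this exact value. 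In either case the output $b^\star$ satisfies $b^\star \leq \bw(G) \leq (\eg(H)+\varepsilon)\cdot b^\star$, and since the only source of error is the randomized call to \Cref{cor:approxcore}, the entire procedure succeeds with high probability.

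For the running time, the randomized approximation call contributes $\poly(t+n)$, and the exact branchwidth call in the second case contributes $f_{\ref{thm_eptas}}(\varepsilon,t)\cdot n$ with $f_{\ref{thm_eptas}}(\varepsilon,t) = 2^{t^{11504}/\varepsilon}$ (absorbing the polynomial overhead of \Cref{prop_fptbw} into the exponent). The argument is not deep beyond the already-established pieces; the only place that requires any care is checking that the substitution of the randomized approximation into the dichotomy leaves the high-probability guarantee intact (which it does, since the FPT call in the second case is deterministic and the correctness of outputting $b$ in the first case depends only on the upper bound $\mathbf{b}_{\ref{cor:approxcore}}(t,b,\eg(H)-1)$ holding, which is precisely what the randomized algorithm guarantees with high probability). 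The main ``obstacle,'' if one can call it that, is simply bookkeeping: re-deriving the linear bound $\mathbf{b}_{\ref{cor:approxcore}}(t,b,\eg(H)-1) \leq \eg(H)\cdot b + c$ with the right exponent on $t$ so that the announced $f_{\ref{thm_eptas}}(\varepsilon,t) = 2^{t^{11504}/\varepsilon}$ is correct.
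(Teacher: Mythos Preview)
Your proposal is correct and mirrors the paper's own approach: the paper does not give a separate proof of \Cref{thm_eptasrando} but simply states that one adapts the proof of \Cref{thm_eptas} using the randomised approximation (i.e., the second item of \Cref{cor:bwapprox} built on \Cref{thm_GMST_randomised}), which is exactly what you do. Your bookkeeping for the constant $c=\Ocal(t^{11504})$ and the case distinction are the intended steps, and your remark about the high-probability guarantee surviving the deterministic FPT call is accurate.
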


Both \zcref{thm_eptas,thm_eptasrando} can be modified to spit out branch-decompositions and grid-minor models.
Since this notably makes the approximation factor worse, we do not state or prove this here directly.
For example, in the case where we apply \zcref{thm_approxcore_grid} instead of \zcref{thm_approxcore} in the proof of \zcref{thm_eptas}, the algorithm returns a value $b$ with $b \leq \bw(G) \leq (\eg(H)^{\nicefrac{7}{2}} + \varepsilon)b$.

\bigskip

\textbf{Acknowledgements.}
The authors wish to thank anonymous referees for several helpful suggestions, which improved the presentation of the paper and some of the running times of the presented algorithms.
We further thank Tony Huynh and Bojan Mohar for helpful discussions and particularly thank Sang-il Oum for communicating David Wood's question to us.

\bibliographystyle{alphaurl}
\bibliography{literature_for_catching_rats}

\end{document}